\documentclass[12pt, leqno]{article}

\usepackage{float}

\usepackage{amsmath}
\usepackage{amssymb}
\usepackage{amsfonts}
\usepackage{amsthm}
\usepackage{amssymb}
\usepackage{amsbsy}
\usepackage{color}
\usepackage{graphicx}
\usepackage{enumerate}
\usepackage{subfigure}
\usepackage[utf8]{inputenc}

\usepackage[greek,english]{babel}
\usepackage{array}
\usepackage{setspace}
\usepackage{wrapfig}
\usepackage{algpseudocode}
\usepackage{lipsum}

\usepackage{mleftright}

\usepackage{amsmath,amsthm,amssymb}
\usepackage{mathtools}
\usepackage{dsfont}

\usepackage{xcolor}



\usepackage[ruled,linesnumbered]{algorithm2e}

\usepackage[bookmarks]{hyperref}

\newtheorem{theorem}{Theorem}[section]
\newtheorem{corollary}[theorem]{Corollary}
\newtheorem{lemma}[theorem]{Lemma}
\newtheorem{proposition}[theorem]{Proposition}
\newtheorem{assumption}[theorem]{Assumption}

\newtheorem{remark}[theorem]{Remark}

\numberwithin{equation}{section}

\newcommand{\E}{\mathbb{E}}

\newcommand{\N}{\mathbb{N}}
\renewcommand{\P}{\mathbb{P}}

\newcommand{\R}{\mathbb{R}}

\newcommand{\eps}{\varepsilon}

\newcommand{\me}{\mathrm{e}}

\newcommand{\norm}[1]{\left\Vert #1 \right\Vert}

\renewcommand{\d}{{\rm d}}

\usepackage{xcolor}

\definecolor{orange}{rgb}{1,0.3,0.2}

\newcommand{\kfrak}{\mathsf{k}}

\newcommand{\keff}{\ensuremath{\kfrak_{\mathtt{eff}}}}

\usepackage{asymptote}

\newcommand{\hide}[1]{}

\newcommand{\expectation}{\mathbb{E}}
\newcommand{\expectationOperator}[1]{\expectation \mleft[ #1 \mright]}

\newcommand{\markovlaw}[1]{ \mathbb{P}_{#1}}
\newcommand{\markovExpectation}[1]{ \mathbb{E}_{#1} }

\newcommand{\unnormalised}[1]{\gamma_{#1}}
\newcommand{\unnormalisedIntegral}[2]{\unnormalised{#1} \mleft( #2 \mright) }

\newcommand{\normalised}[1]{\eta_{#1}}
\newcommand{\normalisedIntegral}[2]{ \normalised{#1} \mleft( #2 \mright) }

\newcommand{\particleindex}[2]{#1^{#2}}

\newcommand{\particlemeasure}[2]{\particleindex{\eta_{#2}}{#1} }
\newcommand{\particleintegral}[3]{\particlemeasure{#1}{#2} \mleft( #3 \mright) }

\newcommand{\naturals}{\mathbb{N}}

\newcommand{\bbfun}[1]{\mathcal{B}_b \mleft( #1 \mright)}
\newcommand{\probmeas}[1]{\mathcal{P} \mleft( #1 \mright)}
\newcommand{\unkilledprobmeas}[1]{ \hat{\mathcal{P}} \mleft( #1 \mright) }

\newcommand{\supnorm}[2]{ \mleft| \middle| #1 \middle| \mright|_{#2}}

\title{Monte Carlo Power Iteration Algorithms: Convergence, Analysis and Application to Reactor Criticality Calculations}

\author{A. M.~G.~Cox\thanks{Department of Mathematical
		Sciences, University of Bath, Bath, UK., BA2 7AY.
		e-mail: \texttt{a.m.g.cox@bath.ac.uk}, \texttt{td561@bath.ac.uk}, \texttt{jh4272@bath.ac.uk}
	},\,
	T. Davis$^*$,\, J. Hofstadler$^*$ and 
	A. E. Kyprianou\thanks{Department of Statistics, University of Warwick, Coventry, UK. CV4 7AL. 
		e-mail:\texttt{andreas.kyprianou@warwick.ac.uk}}}

\date{\today}

\begin{document}
	\maketitle
	
	\begin{abstract}
		We study Monte Carlo power iteration algorithms in the context of reactor criticality simulations. Under mild conditions on the underlying physical system and the algorithmic structure, we derive explicit error bounds, which are to the best of our knowledge the first rigorous results for such methods which are ubiquitous in nuclear engineering. This is done by exploiting connections to Sequential Monte Carlo algorithms, and extending recent developments about stochastic neutron transport. We demonstrate the applicability of our results by considering a concrete version of the Monte Carlo power iteration algorithm as well as variance reduction schemes. 
	\end{abstract}

\noindent\textbf{Keywords: } power iteration, stochastic neutron transport, Monte Carlo algorithms, $\keff$-eigenvalue

\noindent\textbf{MSC classification: } 65C05; 60J20; 82D75 
	
	\section{Introduction}
	A fundamental task in nuclear engineering is to compute the $\keff$-eigenvalue. Roughly speaking, $\keff \in (0, \infty)$ describes the neutron gain/loss between fission generations, with $\keff < 1$ corresponding to a decreasing $\keff=1$ to a stable and $\keff >1 $ to a growing number of particles. These settings are called \textit{sub-critical} ($\keff <1$), \textit{critical} ($\keff=1$) and \textit{super-critical} ($\keff >1$), respectively.
	
	Neutron transport is modelled via the Neutron Transport Equation (NTE), see \cite{SNTE, SNTE-II, MultiNTE}. Let $D\subseteq\mathbb{R}^3$ be a non-empty, smooth, open, bounded convex domain such that $\partial D$ has zero Lebesgue measure, and $V = \{\upsilon\in \mathbb{R}^3:\upsilon_{\mathtt{min}}\leq  |\upsilon|\leq \upsilon_{\mathtt{max}}\}$. Then, the NTE is given by 
	\begin{equation}\label{equ:NTE}
		\frac{\partial \psi_t[g]}{\partial t}= ({\mathcal T} + \mathcal{S} +{\mathcal F}) \psi_t[g],
	\end{equation}
	where, for sufficiently regular $f \colon D \times V \to \R$, we define
	\begin{equation}
		\label{equ:operators_for_NTE}
		\left\{
		\begin{array}{rl}
			{\mathcal T}{f}(r, \upsilon)  &:=  \upsilon\cdot\nabla{f}(r, \upsilon) - \sigma(r, \upsilon){f}(r, \upsilon)\\
			\\
			{\mathcal S}{f}(r, \upsilon) &:= \sigma_{\texttt{s}}(r, \upsilon)\int_{V}{f}(r, \upsilon') \pi_{\texttt{s}}(r, \upsilon, \upsilon') \d\upsilon' \\
			\\
			{\mathcal F}{f}(r, \upsilon) &: =  \sigma_{\texttt{f}}(r, \upsilon) \int_{V}{f}(r, \upsilon') \pi_{\texttt{f}}(r, \upsilon, \upsilon')\d\upsilon'.
		\end{array}
		\right.
	\end{equation}
	
	Let us explain the terms occurring in \eqref{equ:NTE} and \eqref{equ:operators_for_NTE}. The functions $\sigma_{\mathtt{s}}(r, \upsilon')$ and $\sigma_{\mathtt{f}}(r, \upsilon')$ provide the rates at which scattering and fission\footnote{Throughout all sections we interpret absorption as a fission event producing zero offspring.} happens. We set $\sigma=\sigma_{\mathtt{f}}+ \sigma_{\mathtt{s}}$. If a scattering happened at position $r$ with incoming velocity $\upsilon'$, then $\upsilon\mapsto \pi_{\mathtt{s}}(r, \upsilon', \upsilon)$ is the distribution of the (new) outgoing velocity. Finally, the average fission production at $r$ with incoming velocity $\upsilon$ is $\upsilon\mapsto \pi_{\mathtt{f}}(r, \upsilon', \upsilon)$. The solution to the NTE, i.e. $\psi_t[g](r, \upsilon)$, is called the neutron density. It is time dependent and has to satisfy the initial condition $\psi_0[g](r, \upsilon) = g(r, \upsilon)$.

	Using \eqref{equ:operators_for_NTE}, the $\keff$-eigenvalue problem is as follows (see \cite[Section 1.5]{LM} or \cite[Chapter 7]{nukebook} for details): Find $\keff>0$ and $\omega\colon D \times V \to\R$ such that 
	\begin{equation}\label{equ:keff_problem}
		\left({\mathcal T} + \mathcal{S} + \frac{1}{\keff}{\mathcal F}\right)\omega = 0.
	\end{equation}
	The number $\keff$ describes the criticality of the system. However, it is typically not possible to solve \eqref{equ:keff_problem} exactly. In nuclear engineering, \textit{Monte Carlo power iteration} algorithms provide estimators for $\keff$, see \cite{lux_monte_1991,BlomquistGelbard2002,booth2006power, BrownEtAl2013}, and are the basis of popular tools, like Tripoli \cite{hugot2024overview}, SCONE \cite{raffuzzi2026status}, MCNP \cite[Section 2.8.1.4]{TechReport_2026_LANL_LA-UR-26-23119Rev.1_KuleszaAdamsEtAl}, or OpenMC \cite{romano2015openmc}.
	
	The Monte Carlo power iteration method exploits the fact that $\mathcal{T}, \mathcal{S}$ and $\mathcal{F}$, have a probabilistic representation. By using the \textit{cross sections} $\sigma_{\mathtt{s}}, \sigma_{\mathtt{f}}, \pi_{\mathtt{s}}$ and $\pi_{\mathtt{f}}$ it is possible to model neutrons as stochastic process, see \cite{SNTE,SNTE-II,SNTE-III} and the book \cite{nukebook}. To estimate $\keff$ one mimics \eqref{equ:keff_problem} via a fixed point iteration\footnote{We refer e.g. to \cite[Section 2.1]{pan2023improved} for an explanation regarding the fixed point argument.}: We start with $N \in \N$ neutrons, stored in $\mathcal{N}_0$. Then, starting from $t=1$, we repeat the following:
	
	\medskip
	\begin{itemize}
		\item[1.] Evolve particles in $\mathcal{N}_{t-1}$ until they either undergo fission or leave the system.
		\item[2.] Collect all the offspring resulting from step one in $\mathcal{N}_t$.
	\end{itemize}
	
	\medskip
	A proxy for $\keff$ is the ratio $\mathcal{N}_t/\mathcal{N}_{t-1}$, however, if the system is not critical, the number of neutrons in $(\mathcal{N}_t)_{t \in \N_0}$ either decreases exponentially (for $\keff <1$) or increases exponentially (for $\keff>1$). To overcome this instability issue, an additional \textit{population control} step is used. That is, after evolving particles from $\mathcal{N}_{t-1}$ exactly $N$ offspring are selected to form $\mathcal{N}_t$ (allowing for iid copies). Then, we estimate $\keff$ by  
	\[
	\hat{\kfrak}_t = \frac{\text{number of offspring resulting from }\mathcal{N}_t}{N}.
	\]
	
	Convergence of Monte Carlo power iteration was discussed e.g. in \cite{brissenden1986biases, lux_monte_1991} see also \cite{nowak2016monte,fauvel2025convergence,fauvel2026convergence} and the references therein for more recent developments. These works (among others) provide a practical understanding of Monte Carlo power iteration. However, a rigorous mathematical/theoretical error analysis, with precisely formulated assumptions and error bounds, is to the best of our knowledge still missing from the literature. The present paper closes this gap: Under suitable assumptions, with $\hat{\kfrak}$ based on arithmetic or geometric means of $\hat{\kfrak}_t$, we prove in Section~\ref{sec:PWR_Iteration}: 
	\begin{equation}\label{equ:L_p_error}
		\E \left[\left\vert \hat{\kfrak} - \keff   \right\vert^p  \right]^{1/p} \leq C_p\left(  \frac{1}{\sqrt{N}} + \frac{\tau^T}{k} + (T+k+1)^{1/p}e^{-cN/p}   \right) 
	\end{equation}
	for arbitrary $p \in [1, \infty)$, with constants $C_p,c >0$ and $\tau<1$, where $N \in \N$ is the population size, $T\in \N$ is a burn in period\footnote{That is, only the iterations $T+1 , \dots, T+k$ are used within $\hat{\kfrak}$.}, and $T+k$ is the total number of iterations. Let us briefly discuss this bound:  
	\begin{enumerate}
		\item[1)] The first part, $N^{-1/2}$, is the typical Monte Carlo rate using $N$ particles. 
		\item[2)] The second part, $\tau^T/k$, gets small if we have a large burn in period $T$ (bringing the system closer to stationarity); or if we use many generations $k$ in $\hat{\kfrak}$.
		\item[3)] The third part, $(T+k+1)^{1/p}e^{-cN/p}$ arises by the fact that at every iteration all particles might die (with probability $e^{-cN}$) leading to a degenerate $\hat{\kfrak}_t$.  
	\end{enumerate}
	We also prove a version of \eqref{equ:L_p_error} where only `surviving genealogies' are considered. In this case the estimate simplifies to $\widetilde{C}_p(\frac{1}{\sqrt{N}} + \frac{\tau^T}{k})$. 
	
	Let us emphasise that the assumptions required to deduce \eqref{equ:L_p_error} are met for a large class of applications. Extending the results of \cite{SNTE-III} we give sufficient conditions in terms of the cross sections appearing in the NTE \eqref{equ:NTE}. This allows us to formulate simple conditions directly for the physical system (reactor), see Section \ref{subsec:Physical_assumptions}. The requirements on the generation of $(\mathcal{N}_t)_{t \in \N_0}$ are not restrictive either and are met by a standard variant of the power iteration algorithm (cf. Section \ref{subsec:explicit_fission}) as well as different variance reduction techniques (cf. Section \ref{subsec:variance_reduction}). 
	
	To prove \eqref{equ:L_p_error} we exploit the fact that a Monte Carlo power iteration algorithm can be regarded as Sequential Monte Carlo (SMC) method. This builds upon the fact that the underlying stochastic process is described via an operator $Q$, given by 
	\[
	Q[f] (x) = G(x ) M[f](x ),
	\]
	for bounded functions $f \colon D \times V \to \R$, where $G \colon D \times V \to [0, \infty)$ is called \textit{potential function}, $M$ is a Markov operator and $x \in D \times V$. Here $G(x)$ is expected number of offspring for a fission event happening in configuration $x \in D \times V$, and $M$ describes the behaviour of a neutron between fission events. In this setting the $\keff$ problem is related to a eigenmeasure $\eta$ and eigenfunction $\omega$ such that 
	\[
	\eta Q = \keff\eta \qquad \text{and}\qquad Q \omega = \keff \omega,
	\]
	and $\keff$ is the leading eigenvalue satisfying $\eta(G) = \keff$, cf. Proposition~\ref{prop:lambda-as-potential-integral}. The simulation of (population controlled) particles $(\mathcal{N}_t)_{t \in \N_0}$ can then be described via $Q$ and fits naturally into the SMC framework. Hence, to establish \eqref{equ:L_p_error}, we first show general SMC bounds in Section \ref{sec:L_p_error_bounds} and consider power iteration algorithms in Section~\ref{sec:PWR_Iteration}.
	
	Let us compare our results to the existing literature. As alluded to above, the power iteration method is a standard algorithm in the nuclear engineering literature. However, a rigorous mathematical analysis, including error bounds such as in \eqref{equ:L_p_error}, is to the best of our knowledge not available. The work \cite{MCNTE} investigates basic Monte Carlo methods in the context of criticality calculations. Here \textit{basic} refers to algorithms simulating independent copies of whole neutron paths, rather than population controlled approaches. Let us mention that also deterministic methods can be considered in the context of the $\keff$ problem. The interested reader may consult e.g. \cite{SG} and the references therein.

	Finally, let us comment on how our results fit into the existing SMC literature. There is significant literature dealing with stability of SMC or particle methods, cf. for example the books \cite{del_moral_feynman-kac_2004,ChopinOmiros}. Our general SMC results of Section~\ref{sec:L_p_error_bounds} serve as tool for establishing \eqref{equ:L_p_error}. Thus, emphasis is on a formulation that allows for easily verifiable conditions in terms of the physical system (such as the description in Section~\ref{subsec:Physical_assumptions}), as well as the population control step (such as in Algorithm~\ref{alg:genericPI}).  
	
	Eigenvalue problems for operators $Q$, as defined above, have been studied in \cite{whiteley2017calculating} where an algorithm to approximate the eigenquantities and also error bounds are given. There it is required that $G$ is uniformly bounded away from $0$, which is typically not the case in the NTE setting. The general eigenvalue problem (in the SMC context) was studied e.g. in~\cite{DelMoralMiclo2003Lyapunov,KontoyiannisMeyn2003Spectral,Whiteley2013Stability}. In contrast to those works, we formulate our assumptions more directly in terms of the NTE; in particular, our results allow for vanishing $G$. The work \cite{CerouDelMoralGuyader2011Nonasymptotic} establishes stability results under appropriate conditions on $G$ and the mixing of $M$. Stability of particle systems under drift and small-set conditions were also studied in \cite{Whiteley2012LinearVariance}. Finally, we note that in the paper \cite{DelMoralDoucet2004AbsorbingMedium} error bounds similar to those in \eqref{equ:L_p_error} (stated in SMC/particle system notation) are presented. The setting there (assumptions and results) are somewhat similar, however, our technique allows for more control over the constant. For instance, \cite[Theorem 3]{DelMoralDoucet2004AbsorbingMedium} would lead to a modified upper bound in \eqref{equ:L_p_error} of the form $C_{m}/\sqrt{N}$, with $m = T+k$ being the total number of steps in the algorithm. 
	
	An outline of the paper is as follows. In Section \ref{sec:Stochastic_NTE} we discuss the stochastic model for the NTE; and then formulate the eigenvalue problem in terms of $Q$ in Section~\ref{sec:math_setting}. Background on SMC algorithms and definitions of $\keff$ estimators are in Section \ref{sec:smc}. We prove general SMC $L^p$ error bounds in Section \ref{sec:L_p_error_bounds}  and apply those to power iteration algorithms in Section~\ref{sec:PWR_Iteration}. 
		\section{Stochastic Neutron Transport}\label{sec:Stochastic_NTE}

		To describe how Monte Carlo methods are related to the $\keff$-problem we link the NTE \eqref{equ:NTE} to probability theory.  
		In this section we review the probabilistic model for \eqref{equ:keff_problem}. Additionally, we state assumptions on the underlying system which incorporate physical quantities, e.g. the geometry of a reactor, into the probabilistic framework. 
		
		In this and all later sections, we assume that all random variables, if not explicitly stated otherwise, are defined on a common probability space $(\Omega, \mathcal{F}, \mathbb{P})$ which is sufficiently regular. 
		
		\subsection{The probabilistic model}\label{sec:probabilistic_model}
		
		Note that \eqref{equ:NTE} is understood on some appropriate $L^2(D\times V)$ space. In addition 
		to the initial conditions there are also boundary conditions. This can be used to model the setting that neutron leaving a reactor core is considered lost to the system.

		The basic elements for determining the NTE \eqref{equ:NTE} are the rates $\sigma_{\mathtt{s}}$, $\sigma_{\mathtt{f}}$ and the densities $\pi_{\mathtt{s}}$, $\pi_{\mathtt f}$. 
		It is possible to use these objects to model neutron transport from a
		probabilistic perspective, see \cite{SNTE, SNTE-II, SNTE-III, nukebook}. 
		Namely, $\sigma_{\mathtt{s}}$, $\sigma_{\mathtt{f}}$, $\pi_{\mathtt{s}}$, $\pi_{\mathtt f}$ induce a (not uniquely defined) branching Markov process describing the neutrons.
		More precisely, this branching Markov process is described by finite counting measures on $D\times V$, say 
		\[
		{Y}_t(\cdot) = \sum_{i = 1}^{N_t} \delta_{(r_i(t), \upsilon_i(t))}(\cdot), \qquad t\geq0,
		\]
		where $N_t$ is the number of alive particles at time $t$, and $((r_i(t), \upsilon_i(t)), i = 1, \cdots, N_t)$ represents their configuration.
		
		For a finite counting measure $\mu$ on $D \times V$, say $\mu$, we write $\mathbb{P}_\mu$ for the underlying probability measure of $(Y_t)_{t \geq 0}$. For measurable and bounded $f: D\times V\to [0,\infty)$, we define
		${Y}_t[f] = \sum_{i = 1}^{N_t}f(r_i(t), \upsilon_i(t))$. The mean
		semigroup is defined as
		$\Psi_t[f](r,\upsilon) =\mathbb{E}_{\delta_{(r,\upsilon)}}[{Y}_t[f]]$,
		for $r\in D, \upsilon\in V$ and $t\geq0$, with $\Psi_0[f] = f$. We have
		\[
		\langle f, \psi_t[g]\rangle =  \langle \Psi_t[f], g\rangle , \qquad t\geq0,
		\]
		where $ \langle\cdot,\cdot\rangle $ is the inner product on $L^2(D\times V)$, i.e. space on which \eqref{equ:NTE} is defined. 
		Furthermore, the  semigroup $(\Psi_t, t\geq0)$ 
		solves 
		\begin{equation*}
			\frac{\partial \Psi_t[f]}{\partial t}= ({\mathcal T} + \mathcal{S} +{\mathcal F}) \Psi_t[f],
		\end{equation*}
		with ${\mathcal T}$, $\mathcal{S}$ and ${\mathcal F}$ defined as in \eqref{equ:operators_for_NTE}. We refer to the works \cite{SNTE, MultiNTE} for more details about stochastic neutron transport. 
			
			Although $(Y_t)_{t \geq 0}$ and $(\Psi_t)_{t \geq 0}$ directly connect the NTE to probability theory we cannot use these objects directly within the $\keff$-problem. However, there is a closely related process that can be used instead.
			Namely, think of each line of descent in $(Y_t)_{t \geq 0}$ as a genealogy. We define the process $({Z}_n)_{ n \in \N_0}$, for $n\geq 0$, by 
			\[
			{Z}_n(\cdot) = \sum_{i = 1}^{N^{(n)}}\delta_{(r^{(n)}_i, \upsilon^{(n)}_i )}(\cdot),
			\]
			where $N^{(n)} \in \N_0$ is the number of particles in the $n$-th generation of their genealogical lines of descent, and $\{(r^{(n)}_i, \upsilon^{(n)}_i ), i = 1, \cdots, N^{(n)}\}$ are the corresponding position-velocity configurations. Here `$n$-th generation' is the collection of neutrons produced at the $n$-th fission event in each genealogical line of descent form the initial ancestor at time zero. We follow \cite{nukebook} and call $(Z_n)_{n \in \N_0}$ the \textit{neutron generational process} (NGP).

			For measurable and bounded $g \colon D \times V \to \R$, set ${Z}_n[g] = \sum_{i = 1}^{N^{(n)}}g(r^{(n)}_i, \upsilon^{(n)}_i )$. 
			Furthermore, we define the expectation semigroup corresponding to $(Z_n)_{ n \in \N_0}$ by 
			\begin{equation}
				{Q}_n[g](r, \upsilon) = \mathbb{E}_{\delta_{(r, \upsilon)}}\left[{Z}_n[g] \right], \qquad n\geq 0, r\in D, \upsilon\in V,
				\label{expsemi}
			\end{equation}
			with ${Q}_0[g]: = g$. We note that indeed ${Q}_{n+m}= {Q}_n({Q}_m)$ for $n,m\in\mathbb{N}_0$. 
			
			As is shown below, the semigroup $(Q_n)_{n \in \N_0}$ allows to connect the $\keff$-problem to the theory of SMC methods. However, before explaining this, we provide assumptions on the physical system (reactor), i.e.~conditions on the cross sections $\sigma_{\mathtt{f}}, \sigma_{\mathtt{s}},\pi_{\mathtt f},\pi_{\mathtt{s}}$.

			\subsection{Assumptions on the physical model}\label{subsec:Physical_assumptions}
			
			In this section we discuss assumptions about the generic nuclear reactor we impose. 
			Our conditions are sufficiently mild as to be relevant for most if not all systems which are modelled in practical situations.
			
			Recall, that the domain where neutrons evolve is $D$, while $V \subseteq \R^3$ describes their velocity. From this point we simply call $(r,v) \in D\times V$ a \textit{configuration} of a neutron. 
			
			We assume that $D \subseteq \R^3$ is a bounded, open and convex set with $\lambda(\partial D)=0$. The boundary $\partial D$ corresponds to the boundary of the reactor, and particles reaching this boundary will be considered as `having left the system'. 
			
			The fissile material is modelled as follows. There exists a subset
			$D_f$, referred to as the \textit{fissile region}. For example, this
			might correspond to the uranium used within a reactor. Additionally,
			there is a \textit{non-fissile} region $D_0 = D \setminus D_f$.
			
			\smallskip
			\begin{description}
				\item[Assumption F.] Neutrons can undergo fissions only if they are in $D_f$. Conversely, in $D_0$ no fission is possible. We assume that $D_f$ is surrounded by $D_0$, i.e. 
				\[
				\text{dist}(D_f, \partial D) >0.
				\]
				Furthermore, we assume that there exists an open ball $B\subseteq D$ of positive radius such that 
				\[
				\inf_{(r,v,v') \in B \times V \times V} \sigma_{\texttt{f}}(r,v)\pi_{\texttt{f}}(r,v,v') > 0.
				\]
			\end{description}
			
			\smallskip
			
			Next we state our conditions regarding scattering events: 
			
			\smallskip
			
			\begin{description}
				\item[Assumption S.]  The probability of scattering is
				uniformly bounded away from $0$ for any configuration. That is, 
				$\inf_{(r,v) \in D \times V}\sigma_{\mathtt{s}}(r, v)>0$.
			\end{description}
			
			\smallskip
			
			Our second condition addresses neutrons that have just undergone
			scattering. Recall that this is modelled via the density
			$\pi_s(r,v, v')$ (where the neutron is at position $(r,v)$ when the
			scatter occurs and $v'$ is the outgoing direction). Hence in total we
			obtain
			\[
			\inf_{(r,v,v') \in D \times V\times V} \sigma_{\mathtt{s}}(r,v)\pi_s(r,v, v') > 0.
			\]
			Finally, we assume that all rates and particle production are bounded
			above:
			
			\smallskip
			
			\begin{description}
				\item[Assumption B] All cross-sections
				$\sigma_{\mathtt{f}}, \sigma_{\mathtt{s}}, \pi_{\mathtt f},
				\pi_{\texttt{s}}$ and the maximum number of particles that can be released
				at single fission events are uniformly upper bounded.
				
			\end{description}

			\section{Mathematical setting}\label{sec:math_setting}
			In this section, we develop in more (mathematical) detail the tools
			that we require to formulate abstract power iteration algorithms. Let
			$(E, \mathcal{E})$ be a measurable space and $\bbfun{E}$ the set of
			all measurable and bounded $f \colon E \to \R$.  We write
			$\supnorm{\cdot}{E}$ for the supremum norm on $E$.  Typically,
			$E=(D,V)$, however, for the sake of a general theory we keep the space
			$(E, \mathcal{E})$ abstract.
			
			\subsection{The abstract $\keff$ setting}
			
			We slightly overload notation and consider a non-negative and bounded linear operator ${Q} : \bbfun{E} \rightarrow \bbfun{E}$, such that $f\mapsto {Q}[f] \in \bbfun{E}$, for $f\in\bbfun{E}$. 
			In the context of nuclear applications, where $E = (D, V)$, $Q$ corresponds to the expectation semigroup of the NGP $(Z_n)_{n \in \N_0}$. That is, $Q= Q_1$ with $Q_1$ as in Section~\ref{sec:probabilistic_model}.
			
			Note that $(x, A) \mapsto Q[\mathds{1}_A] (x) = : Q(x,A)$ is a generalised Markov transition kernel. Indeed, for $A \in \mathcal{E}$ the mapping $x \mapsto Q(x, A)$ is measurable, however, $Q(x,E) \neq 1$ in general.    
			Let $\probmeas{E}$ be the set of probability measures on $(E, \mathcal{E})$. For $\mu \in \probmeas{E}$, we define
			\[
			\mu{Q}(A) = \int_E {Q}(x, A)\mu(\d x), \qquad A\in\mathcal{B}.
			\]
			
			We assume that ${Q}$ satisfies the following Perron-Frobenius type condition:
			\begin{assumption} \label{assumption:Q-perron-frobenius}
				There exist $\keff \in (0, \infty)$, $\omega \in \bbfun{E}$, and $\eta \in \probmeas{E}$ such that:
				\begin{itemize}
					\item[(i)] ${Q}[\omega] = \keff \omega$ pointwise on $E$;
					\item[(ii)] $\eta {Q}(A) = \keff \eta(A)$ for each $A \in \mathcal{E}$;
					\item[(iii)] If $\alpha \in \mathbb{C}$ is any other eigenvalue of ${Q}$, then $\left| \alpha \right| < \keff$. 
				\end{itemize}
			\end{assumption}
			
			We call $(\keff, \omega, \eta)$ in Assumption \ref{assumption:Q-perron-frobenius} the leading eigentriple of ${Q}$. Note, that here we again overload the notation, since for general linear operators $Q$ the eigenvalue $\keff$ in Assumption \ref{assumption:Q-perron-frobenius} may not have a physical interpretation. 
			In the nuclear context, where $Q$ corresponds to the
			semigroup of the NGB, Assumption \ref{assumption:Q-perron-frobenius}
			exactly refers to the physical quantities which are considered in
			\eqref{equ:keff_problem},  see \cite{SNTE-III} or \cite[Chapter
			7]{nukebook}. 
			
			In order to fit well into the SMC literature, we assume that $Q$ is the product of a potential function $G \colon E \to [0, \infty]$ and a Markov kernel $M \colon E \times \mathcal{E} \to [0,1]$. 
			To be more precise, we assume that the potential function $G $ is measurable and bounded, and that 
			\[
			Q(x, A) = G(x) M(x, A),
			\]
			for any $x \in E$ and $A \in \mathcal{E}$.
			Note that alternatively we can write $G(x) = Q(x, E)$.
			Using this together with Assumption \ref{assumption:Q-perron-frobenius} we immediately obtain: 
			
			\begin{proposition} \label{prop:lambda-as-potential-integral}
				If Assumption \ref{assumption:Q-perron-frobenius} is true, then $\eta(G) = \keff$.
			\end{proposition}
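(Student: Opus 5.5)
The plan is to evaluate the eigenmeasure identity in Assumption~\ref{assumption:Q-perron-frobenius}(ii) at the whole space $A = E$ and combine it with the factorisation $Q(x,A) = G(x)M(x,A)$.

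First, since $M$ is a Markov kernel we have $M(x,E) = 1$ for every $x \in E$. Hence $Q(x,E) = G(x)M(x,E) = G(x)$, which is the observation already recorded before the proposition.

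Next, by the definition of $\mu Q$ with $\mu = \eta$ and $A = E$,
\[
\eta Q(E) = \int_E Q(x,E)\,\eta(\d x) = \int_E G(x)\,\eta(\d x) = \eta(G).
\]
This integral is well defined and finite because $G$ is non-negative, measurable and bounded, and $\eta$ is a probability measure.

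Finally, Assumption~\ref{assumption:Q-perron-frobenius}(ii) with $A = E$ gives $\eta Q(E) = \keff\, \eta(E) = \keff$, because $\eta \in \probmeas{E}$ satisfies $\eta(E) = 1$. Comparing this with the display yields $\eta(G) = \keff$.

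There is no real obstacle. The only point needing care is that $E \in \mathcal{E}$ is an admissible choice of $A$ in (ii), and it is, since (ii) holds for every $A \in \mathcal{E}$. Parts (i) and (iii) of the assumption are not needed.
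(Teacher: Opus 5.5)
Your proof is correct and follows essentially the paper's own (one-line) argument: it notes $G(x) = Q(x,E)$ and then reads off $\eta(G) = \eta Q(E) = \keff\,\eta(E) = \keff$ from Assumption~\ref{assumption:Q-perron-frobenius}(ii) with $A = E$.
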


			Recall that in the context of the NTE, $G(x)$ stands for the average number of offspring a neutron produces if it is about to undergo fission at configuration $x = (r,v) \in D \times V$.
			To incorporate the fact that there are non-fissile regions we have to work in the setting where $E_0 = \{x \in E \colon G(x) =0\}$ is non-empty.
			To exclude the situation that any un-absorbed particle is always killed within one step we assume the following: 
			\begin{assumption} \label{assumption:absorption}
				For each $x \in E \setminus E_0$ we have $M (x, E\setminus E_0 ) > 0$. 
			\end{assumption}
			This assumption is standard in the interacting particle system literature, compare e.g.~with \cite[Equation (2.16) on p. 67]{del_moral_feynman-kac_2004}. In the setting of Section \ref{subsec:Physical_assumptions} this assumption is satisfied due to the uniform lower bounds on the cross-sections. 
			
			\subsection{Interacting particle systems notation}

			In this section we introduce the required notation and some standard facts from the interacting particle systems literature. For further details and proofs we refer e.g. to \cite{del_moral_feynman-kac_2004}.

			Fix some $\eta_0 \in \probmeas{E}$ such that $\eta_0 (E\setminus E_0) =1$. Let $X = ( X_n)_{n\in\N_0}$ be a time-homogeneous Markov chain on $(E, \mathcal{E})$ with transition kernel $M$. Let $\markovlaw{\eta_0}$ be the law of this process when initialised from $X_0 \sim \eta_0$ and $\markovExpectation{\eta_0}$ be the corresponding expectation.

			For each $n \in \naturals$ we define $\unnormalised{n}$ and $\normalised{n}$, respectively a finite measure and a probability measure on $(E, \mathcal{E})$, as follows: 
			\begin{align} \label{eq:normalised-def}
				\unnormalisedIntegral{n}{f}  =  \E_{\eta_0}\left[ f \left( X_n \right) \prod_{k=1}^{n-1} G \left( X_k \right) \right] 
				\qquad \text{and} \qquad
				\normalisedIntegral{n}{f} = \frac{ \unnormalisedIntegral{n}{f} }{ \unnormalisedIntegral{n}{1} },  
			\end{align}
			for $f \in \bbfun{E}$. By Assumption \ref{assumption:absorption}, $\unnormalisedIntegral{n}{1} > 0$ for every $n \in \naturals$. 

			Furthermore, we have
			\begin{equation} \label{eq:unnormalised-product}
				\unnormalisedIntegral{n}{f} = \normalisedIntegral{n}{f} \prod_{k=0}^{n-1} \normalisedIntegral{k}{G}.
			\end{equation}
			Setting $Q^0(f) = f$, and $Q^n = Q (Q^{n-1})$ for $n \in \N$, then 
			\begin{equation*}
				\unnormalisedIntegral{n}{f} = \eta_0 Q^n(f) \qquad \text{and} \qquad \unnormalisedIntegral{n+1}{f} = \unnormalisedIntegral{n}{Q(f)},
			\end{equation*}
			for $n \in \naturals$ and $f \in \bbfun{E}$. 
			
			It is well understood that the flow of probability measures $( \eta_n)_{n \in \N}$ has a similar dynamical structure: Define  $\unkilledprobmeas{E}= \{ \mu \in \probmeas{E} : \mu(E_0) < 1 \}$, so that $\mu(G) > 0$ for each $\mu \in \unkilledprobmeas{E}$. Let the non-linear operator $\Phi : \unkilledprobmeas{E} \rightarrow \unkilledprobmeas{E}$ be defined by
			\begin{equation*}
				\Phi(\mu)(f) =  \frac{ \mu Q(f) }{ \mu(G) }, \qquad \qquad \left( \mu, f \right) \in \unkilledprobmeas{E} \times \bbfun{E}.
			\end{equation*}
			By Proposition 2.5.2 of \cite{del_moral_feynman-kac_2004}, for each $n \in \N_0$, 
			\begin{equation*}\label{etarecursion}
				\eta_{n+1} = \Phi(\eta_n) \qquad \text{and} \qquad \eta_n = \Phi^n (\eta_0), 
			\end{equation*} 
			for $n \in \N_0$, where $\Phi^0(\eta) = \eta$ and $\Phi^n(\eta) = \Phi (\Phi^{n-1}(\eta))$.

			In order to prove our results, we make use of the following assumption. 
			
			\begin{assumption}\label{ass_perron_frobenius_assumption}
				Additionally to Assumption \ref{assumption:Q-perron-frobenius}, there exist $C \in (0, \infty)$ and $\tau \in [0,1)$ such that for $x \in E \setminus E_0$ and measurable $g \colon E \to \R$ with $\norm{g}_E \leq 1$, we have 
				\[
				\left\vert \frac{Q^n[g](x)}{\keff^n} - \omega(x)\eta(g) \right \vert \leq C \tau^n
				\]
				for any $n\in \N$. Furthermore, we have $0 < \inf_{x \in E\setminus E_0} \omega(x) \leq \sup_{x \in E \setminus E_0} \omega(x) < \infty$. 
			\end{assumption}
			
			Let us comment on this assumption. Existence of the eigen-quantities $(\keff, \omega, \eta)$ and exponential convergence as in Assumption \ref{ass_perron_frobenius_assumption} can be deduced from \cite{CV} or \cite{champagnat2023general}. To apply these results it is required to show that the Neutron Branching Process $Y$, see Section \ref{sec:probabilistic_model}, returns with sufficiently high probability to certain regions conditioned on survival. Finally, the `largest eigenvalue' property is a consequence of the exponential convergence (as stated in Assumption~\ref{ass_perron_frobenius_assumption} but only assuming existence of the eigen-quantities rather than Assumption \ref{assumption:Q-perron-frobenius} directly). 
			
			The results of \cite{CV} have been used in \cite{SNTE-III} to verify Assumption \ref{ass_perron_frobenius_assumption}. However, the physical systems considered in \cite{SNTE-III} allow for fission to happen everywhere in the reactor. This is not possible in the setting of the present paper. Yet, by slightly adapting the proof it is still possible to proceed as in \cite[Section 5]{SNTE-III}. We highlight the main ideas/differences (fitting into the set up of Section \ref{subsec:Physical_assumptions}), but omit technical details:  
			\begin{itemize}
				\item[1.] Choose the `minorising measure' $\nu$ to be the uniform distribution on $B \times V$, rather than the uniform distribution on the whole space $D \times V$.
				\item[2.] Considering paths which have two scatters, the first one pointing towards some ball $B' \subseteq B$ and the second one in $B'$, and using the lower bounds on fission and scattering cross-sections, shows \cite[A1]{CV} with $n_0=1$. 
				\item[3.] Using a similar `seven jump' argument as in \cite[Chapter 5]{SNTE-III} verifies \cite[A2]{CV}.
			\end{itemize}

			We note that from \cite{CV} it does not follow directly that $\omega$ is uniformly bounded away from $0$ on the set $E \setminus E_0$. However, we claim that in the setting of Section \ref{subsec:Physical_assumptions} this is true: To this end, note that $M(x , A) \geq c \nu(A)$ for any $x \in E\setminus E_0$ and measurable $A \subset E$, with $\nu$ being the same minorising measure as above, and where $c>0$ is independent of $x,A$. Using this, the fact that $\omega$ is positive on $E\setminus E_0$ (which indeed does follow from \cite{CV}) and the eigenfunction property we obtain 
			\begin{align*}
				\omega(x) = \frac{1}{\keff} Q\omega (x) = \frac{G(x)}{\keff} \int_E \omega(x') M(x, \d x') \geq \frac{\kappa}{\keff} \int_{E \setminus E_0} \omega(x') M(x, \d x') >0,
			\end{align*}
			for any $x \in E \setminus E_0$, where the lower bound does not depend on $x$. 
			In particular, this argument gives:
			\begin{lemma}
				Under Assumptions S, F, and B of Section~\ref{subsec:Physical_assumptions},
				Assumptions~\ref{assumption:Q-perron-frobenius} and \ref{ass_perron_frobenius_assumption} hold.
			\end{lemma}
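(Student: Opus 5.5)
The proof goes through the abstract quasi-stationary framework of \cite{CV}, in the way it was used in \cite[Section 5]{SNTE-III}. The relevant killed Markov process is the one whose expectation semigroup is $\Psi_t$ (equivalently, the process generated by one-step kernel $Q=GM$ of the NGP), and the adaptations are the ones listed above. Once (A1)–(A2) of \cite{CV} are verified, \cite{CV} gives a triple $(\keff,\omega,\eta)$ with $\omega$ bounded and positive on $E\setminus E_0$ and $\eta\in\probmeas{E}$. It also gives the uniform exponential convergence
\[
\Bigl|\keff^{-n}Q^n[g](x)-\omega(x)\eta(g)\Bigr|\le C\tau^n
\]
for $\norm{g}_E\le 1$ and $x\in E\setminus E_0$. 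Parts (i) and (ii) of Assumption \ref{assumption:Q-perron-frobenius} come directly from this. For (iii), take an eigenpair $Q h=\alpha h$ with $h$ bounded, $\norm{h}_E\le 1$. Then $\alpha^n\keff^{-n}h(x)=\omega(x)\eta(h)+O(\tau^n)$. If $|\alpha|\ge\keff$ and $\alpha\neq\keff$, the left side cannot converge unless $\eta(h)=0$, and in that case it must tend to zero, forcing $h\equiv0$ on $E\setminus E_0$. On $E_0$ we have $Qh=0$ because $G=0$ there, so $h=0$ there as well (for $\alpha\neq0$). This contradicts $h$ being an eigenvector.

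To verify (A1) with $n_0=1$, I take $\nu$ to be uniform on $B\times V$. Assumption F, convexity of $D$, and $\operatorname{dist}(D_f,\partial D)>0$ let me construct paths from any $x\in E\setminus E_0$ that stay in $D$, reach $B'\subset B$, and are followed by a fission. Along such a path:
\begin{itemize}
\item[] the particle scatters once towards a ball $B'\subset B$;
\item[] it scatters again inside $B'$;
\item[] it then undergoes fission in $B$.
\end{itemize}
The lower bounds on $\sigma_{\mathtt s}\pi_{\mathtt s}$ (Assumption S) and on $\sigma_{\mathtt f}\pi_{\mathtt f}$ in $B$ (Assumption F), together with the upper bounds in Assumption B that control the exponential survival factors over bounded travel times, give $M(x,\cdot)\ge c\,\nu(\cdot)$ with $c$ uniform in $x$. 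I would then establish (A2), the comparison between survival from an arbitrary start and survival from $\nu$, by the seven-jump argument of \cite[Section 5]{SNTE-III}, this time restricted to fission events located in $B$.

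Finally, uniform positivity of $\omega$ follows from the displayed eigenfunction inequality, using the minorisation in the form $M(x,\cdot)\ge c\,\nu$:
\[
\omega(x)\ge \frac{\kappa c}{\keff}\int_{B\times V}\omega\,\d\nu>0 .
\]
Here $\kappa$ is a lower bound for $G$, which is where the argument needs care. Assumption F bounds $\sigma_{\mathtt f}\pi_{\mathtt f}$ below only on $B$, not on the whole fissile region. So I would redefine $G$ and $M$ by absorbing the fission multiplicity into $M$ if necessary, or check that $G$ is bounded below on $E\setminus E_0$ as the text asserts. Positivity of $\int\omega\,\d\nu$ comes from $\omega>0$ on $E\setminus E_0$, since $\nu$ charges $B\times V\subset E\setminus E_0$. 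Boundedness of $\omega$ comes from $\omega=\lim_n\keff^{-n}Q^n[1]$ and the convergence bound. I expect (A2) to be the main obstacle, because fission is now confined to $D_f$ and the path-counting of \cite{SNTE-III} has to route particles through $B$ while still obtaining uniform constants.
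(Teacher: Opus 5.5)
Your route is the paper's. You apply the Champagnat--Villemonais criteria as in the third stochastic-NTE paper, take $\nu$ uniform on $B\times V$, use two-scatter paths for (A1) with $n_0=1$ and the seven-jump argument for (A2), and get uniform positivity of $\omega$ from the eigen-equation. Your derivation of (iii) from the exponential convergence correctly fills in what the paper only asserts.

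One small correction. $\Psi_t$ is the continuous-time mean semigroup, and its leading eigenvalue is the time eigenvalue, not $\keff$. The quasi-stationary machinery must therefore be run on the generational kernel $Q$, via the generational many-to-one representation. The two objects are not ``equivalent''.

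The real problem is the one you flagged yourself, and your proposed remedy does not close it. Because $M$ must be a probability kernel, $G=Q(\cdot,E)$ is determined by $Q$. There is no freedom to move the fission multiplicity into $M$. More decisively, $\omega\ge 0$, boundedness of $\omega$ and the eigen-equation give
\[
0<\omega(x)=\keff^{-1}G(x)\,M\omega(x)\le \keff^{-1}\|\omega\|_E\,G(x),\qquad x\in E\setminus E_0 .
\]
Hence $\inf_{E\setminus E_0}\omega>0$ forces $\inf_{E\setminus E_0}G\ge \keff\inf_{E\setminus E_0}\omega/\|\omega\|_E>0$. A uniform lower bound on $G$ off $E_0$ is therefore \emph{necessary} for the second half of Assumption~\ref{ass_perron_frobenius_assumption}, not merely a convenience of the proof.

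Assumptions S, F, B do not supply this bound. Assumption F bounds $\sigma_{\mathtt f}\pi_{\mathtt f}$ below only on $B$. Read $G(x)$ as the mean number of offspring of a fission at $x$. A mean yield that stays positive on $D_f$ but tapers continuously to $0$ near $\partial D_f$ is compatible with S, F, B, and the display above then gives $\inf_{E\setminus E_0}\omega=0$.

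The paper's sketch has exactly the same dependence. Its displayed lower bound for $\omega$ uses the constant $\kappa$ of Assumption~\ref{ass:minimal_survival_probability} before that assumption is introduced, and the paper only asserts, without proof, that it holds under S, F, B. To complete the argument you must add the hypothesis explicitly, for example $G\ge\kappa$ on $\{G>0\}$, i.e.\ the fission yield is bounded below wherever fission can occur. With that hypothesis, your bound $\omega(x)\ge \kappa c\,\keff^{-1}\int\omega\,\d\nu$ and the rest of your proof go through.
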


			We now give a consequence of these assumptions.
			
			\begin{proposition}\label{prop:exponential_congergence_in_total_variation}
				Let Assumption \ref{ass_perron_frobenius_assumption} be satisfied. Then there exists $\widetilde{C} < \infty$ such that for any $x \in E \setminus E_0$ and any measurable $g \colon E \to \R$ with $\norm{g}_E \leq 1$ we have
				\[
				\left\vert \frac{Q^n [g](x)}{Q^n[\mathds{1}](x)} - \eta(g) \right\vert \leq \widetilde{C} \tau^n,
				\]
				for any $n \in \N$, where $\tau$ is as in Assumption \ref{ass_perron_frobenius_assumption}. 
				Consequently, for any $x \in E \setminus E_0$, we also have $\norm{\Phi^n (\delta_x) - \eta}_{tv} \leq \widetilde{C}\tau^n$ for any $n \in \N$. 
			\end{proposition}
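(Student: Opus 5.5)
We write the ratio as a quotient of two quantities that Assumption~\ref{ass_perron_frobenius_assumption} controls. Fix $x \in E\setminus E_0$ and $g$ with $\norm{g}_E\le 1$, and set
\[
a_n := \frac{Q^n[g](x)}{\keff^n}, \qquad b_n := \frac{Q^n[\mathds{1}](x)}{\keff^n}.
\]
Applying Assumption~\ref{ass_perron_frobenius_assumption} to $g$ and to $\mathds{1}$, and using $\eta(\mathds{1})=1$, gives
\[
|a_n - \omega(x)\eta(g)| \le C\tau^n, \qquad |b_n - \omega(x)| \le C\tau^n .
\]
Then
\[
\frac{a_n}{b_n} - \eta(g) = \frac{a_n - \omega(x)\eta(g) + \eta(g)\bigl(\omega(x) - b_n\bigr)}{b_n},
\]
so, since $|\eta(g)|\le 1$, the numerator is at most $2C\tau^n$ in absolute value. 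It remains to bound $b_n$ from below uniformly in $x$ and $n$.

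Let $\omega_* := \inf_{E\setminus E_0}\omega > 0$. Choose $n_0$ such that $C\tau^{n}\le \omega_*/2$ for all $n\ge n_0$; this is possible since $\tau<1$, and if $\tau=0$ we may take $n_0=1$. For $n\ge n_0$ we get $b_n \ge \omega_*/2$, and hence
\[
\left|\frac{Q^n[g](x)}{Q^n[\mathds{1}](x)} - \eta(g)\right| \le \frac{4C}{\omega_*}\,\tau^n .
\]
For the finitely many $n< n_0$ we use a trivial bound. Positivity of $Q$ gives $|Q^n[g]|\le Q^n[\mathds{1}]$, so the left-hand side is at most $2 \le 2\tau^{-n_0}\tau^n$ when $\tau>0$. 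We need $Q^n[\mathds{1}](x)>0$ for the ratio to make sense. This follows from Assumption~\ref{assumption:absorption} by induction: $Q^n[\mathds{1}](x)=\E_x\bigl[\prod_{k=0}^{n-1}G(X_k)\bigr]$ in the Feynman–Kac form, and the chain stays in $E\setminus E_0$ with positive probability at each step. Taking $\widetilde C = \max(4C/\omega_*,\, 2\tau^{-n_0})$ gives the first claim. The only genuine point of care is this uniform lower bound on $b_n$; it is where the condition $\inf_{E\setminus E_0}\omega>0$ is essential, and it also explains why the statement is restricted to $x\notin E_0$.

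For the total variation statement, we identify $\Phi^n(\delta_x)$. By induction on $n$ using the definition of $\Phi$ (and noting that each iterate stays in $\unkilledprobmeas{E}$ because the normalisations are positive), we show
\[
\Phi^n(\delta_x)(g) = \frac{\delta_x Q^n(g)}{\delta_x Q^{n-1}(G)}.
\]
Since $G = Q[\mathds{1}]$, the denominator equals $Q^n[\mathds{1}](x)$, so $\Phi^n(\delta_x)(g) = Q^n[g](x)/Q^n[\mathds{1}](x)$. Taking the supremum over measurable $g$ with $\norm{g}_E\le1$ then yields $\norm{\Phi^n(\delta_x)-\eta}_{tv}\le \widetilde C\tau^n$. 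If the total variation norm is normalised with a factor $1/2$, this only changes the constant.
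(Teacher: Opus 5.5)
Your proposal is correct and follows essentially the same route as the paper: normalise by $\keff^n$, bound the numerator by $2C\tau^n$ via Assumption~\ref{ass_perron_frobenius_assumption}, and bound the denominator below using $\inf_{E\setminus E_0}\omega>0$ for $n\ge n_0$. Your handling of the finitely many $n<n_0$, your check that $Q^n[\mathds{1}](x)>0$, and your explicit identification $\Phi^n(\delta_x)(g)=Q^n[g](x)/Q^n[\mathds{1}](x)$ supply details that the paper's short proof leaves implicit.
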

			
			\begin{proof}
				Let $n_0 \in \N$ be large enough such that $C \tau^{n_0} < \omega(x)$ for any $x \in E \setminus E_0$. Set $T_n[g] (x) = \keff^{-n} \omega(x) Q^n[g](x)$.
				
				Multiplying numerator and denominator by $\keff^{-n} \omega(x)$ we obtain 
				\[
				\left\vert\frac{Q^n [g](x)}{Q^n[\mathds{1}](x)} - \eta(g) \right\vert 
				= 
				\left\vert  \frac{T_n[g](x)}{T_n[g](x)} - \eta(g) \right\vert 
				=
				\left\vert\frac{T_n [g](x) - \eta(g) T_n[\mathds{1}](x)}{T_n[\mathds{1}](x) }\right\vert.
				\]
				By Assumption \ref{ass_perron_frobenius_assumption}, we get that the denominator in the above expression is lower bounded by $\inf_{x \in E \setminus E_0}\omega(x) - C\tau^{n_0}>0$ for any $n \geq n_0$, while the numerator decreases as $\tau^n$. 
				Finally, the statement for the total variation distance follows by well known duality formulas. 
			\end{proof}
			
			The following result shows that in the notation of this section there are several different quantities which are converging to $\keff$. 
			These insights can later be used to define different estimators in the Monte Carlo context. 
			
			\begin{proposition} \label{prop:lambda-as-feynman-kac-limit}
				Under Assumption \ref{ass_perron_frobenius_assumption}, for
				$\eta_0 = \delta_x$, with $x \in E \setminus E_0$, we have
				\begin{equation}
					\keff = \lim_{n \rightarrow \infty}\gamma_n(1)^\frac1{n}= \lim_{n \rightarrow \infty} \eta_0 Q_n(1)^\frac1{n} = \lim_{n \rightarrow \infty} \frac1{n} \sum_{k=0}^{n-1} \eta_k (G). 
				\end{equation}
			\end{proposition}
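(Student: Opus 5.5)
The plan is to prove the three identities in turn, using $\gamma_n(1) = \eta_0 Q^n(\mathds{1}) = Q^n[\mathds{1}](x)$ for $\eta_0=\delta_x$. The first two limits are then the same quantity, so it suffices to show $Q^n[\mathds{1}](x)^{1/n}\to\keff$. The Cesàro limit will follow from the product formula \eqref{eq:unnormalised-product} together with Proposition \ref{prop:exponential_congergence_in_total_variation}.

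\emph{Step 1.} Apply Assumption \ref{ass_perron_frobenius_assumption} with $g=\mathds{1}$ (so $\norm{g}_E\le 1$ and $\eta(\mathds{1})=1$). This gives
\[
\bigl| \keff^{-n} Q^n[\mathds{1}](x) - \omega(x)\bigr| \le C\tau^n .
\]
Since $x\in E\setminus E_0$, we have $0<\underline\omega:=\inf_{E\setminus E_0}\omega\le \omega(x)\le \overline\omega<\infty$. Hence for $n\ge n_0$, with $n_0$ chosen so that $C\tau^{n_0}<\underline\omega/2$,
\[
\tfrac12\underline\omega\,\keff^n \le Q^n[\mathds{1}](x) \le (\overline\omega + C)\keff^n .
\]
Taking $n$-th roots, both constants raised to the power $1/n$ tend to $1$. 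Therefore $\gamma_n(1)^{1/n}\to\keff$, which settles the first two equalities.

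\emph{Step 2.} For the Cesàro limit, note that \eqref{eq:unnormalised-product} with $f=\mathds{1}$ gives $\gamma_n(1)=\prod_{k=0}^{n-1}\eta_k(G)$, so
\[
\frac1n\log\gamma_n(1)=\frac1n\sum_{k=0}^{n-1}\log\eta_k(G).
\]
This form yields a geometric-mean statement. To get the arithmetic mean stated in the proposition, I will argue directly. We have $\eta_k=\Phi^k(\delta_x)$, and Proposition \ref{prop:exponential_congergence_in_total_variation} gives $\norm{\eta_k-\eta}_{tv}\le\widetilde C\tau^k$. Since $G$ is bounded, $|\eta_k(G)-\eta(G)|\le \norm{G}_E\,\widetilde C\tau^k$, up to the factor $2$ depending on the TV convention. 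By Proposition \ref{prop:lambda-as-potential-integral}, $\eta(G)=\keff$, so $\eta_k(G)\to\keff$ exponentially fast. Cesàro averaging preserves the limit, and in fact
\[
\Bigl|\frac1n\sum_{k=0}^{n-1}\eta_k(G)-\keff\Bigr|\le \frac{C'}{n(1-\tau)} .
\]
The $k=0$ term $\eta_0(G)=G(x)$ is a single bounded term and does not affect the limit.

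\emph{Main obstacle.} The only delicate point is bookkeeping rather than substance. First, one must check that the index conventions in \eqref{eq:normalised-def} and \eqref{eq:unnormalised-product} match $\gamma_n(1)=\eta_0Q^n(\mathds{1})$. Second, Proposition \ref{prop:exponential_congergence_in_total_variation} is stated for $n\in\N$ and implicitly for $n$ large, so finitely many initial terms must be handled separately. They are bounded by $\norm{G}_E$ and vanish after dividing by $n$. Positivity of $\gamma_n(1)$, which is needed for taking roots, is guaranteed by Assumption \ref{assumption:absorption}.
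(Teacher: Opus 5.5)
Your proof is correct. Step 2 is essentially the paper's argument: $\eta_n(G)=\Phi^n(\delta_x)(G)\to\eta(G)=\keff$ by exponential convergence, followed by Cesàro averaging.

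You differ from the paper on the limit of $\gamma_n(1)^{1/n}$. The paper never looks at $Q^n[\mathds{1}](x)$ directly. It applies Cesàro to the positive sequence $\eta_k(G)$ in geometric-mean form, that is, to $\log\eta_k(G)\to\log\keff$, which uses $\keff>0$ and $\eta_k(G)>0$ for every $k$. It then identifies $\bigl(\prod_{k=0}^{n-1}\eta_k(G)\bigr)^{1/n}$ with $\gamma_n(1)^{1/n}$ through \eqref{eq:unnormalised-product}. You instead read the limit off from $\keff^{-n}Q^n[\mathds{1}](x)\to\omega(x)\in(0,\infty)$, squeezing between two constants whose $n$-th roots tend to $1$.

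Your route is more direct and self-contained for this part. It needs neither the product formula nor the well-definedness of the whole flow $(\eta_k)_k$; positivity is only needed for large $n$, where your bound supplies it. The paper's route reduces all three limits to the single fact $\eta_n(G)\to\keff$. It also shows explicitly that $\gamma_n(1)^{1/n}$ is the geometric mean of the $\eta_k(G)$, which is the identity that motivates the estimator $\kfrak^{geo}_{T,k}$ used later.
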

			\begin{proof}
				Assumption \ref{ass_perron_frobenius_assumption}, and the fact that $\frac{\eta_0 Q^n(G )}{\eta_0Q^n(\mathds{1})} = \normalisedIntegral{n}{G} = \Phi^n (\eta_0)(G)$ yield $\lim_{n \rightarrow \infty} \normalisedIntegral{n}{G} = \keff$. Thus, by positivity also the arithmetic and geometric mean converges, 
				\begin{equation*}
					\lim_{n \rightarrow \infty} \frac1{n} \sum_{k=0}^{n-1} \normalisedIntegral{k}{G} = \lim_{n \rightarrow \infty} \left( \prod_{k=0}^{n-1} \normalisedIntegral{k}{G} \right)^\frac1{n} = \keff.
				\end{equation*}
				Finally, we recall from \eqref{eq:unnormalised-product} that $\prod_{k=0}^{n-1} \normalisedIntegral{k}{G} = \unnormalisedIntegral{n}{1} = \eta_0 Q_n(1)$.
			\end{proof}
			
			\section{Sequential Monte Carlo Algorithms}\label{sec:smc}
			Sequential Monte Carlo (SMC) algorithms are a popular tool in applications to generate samples from a given target distribution, we refer to \cite{ChopinOmiros} for an introduction. In this section we review the required ingredients which are used later to analyse Monte Carlo power iteration algorithms from an SMC perspective.

			\subsection{SMC and power iteration for neutron transport}\label{subsec:SMC4NTE}
			
			In this section, we describe the SMC scheme that is of relevance later to analyse Monte Carlo power iteration methods.  We recall the setting of Section~\ref{sec:Stochastic_NTE}: The linear operator $Q$, acting on
			$\bbfun{E}$, has the form
			\[
			Q(x, A) = G(x) M(x, A),
			\]
			for $x \in E$ and $A \in \mathcal{E}$, where $G$ is a (measurable and bounded) potential function and $M$ a Markov kernel.
			Now, we evolve $N \in \N$ weighted particles as follows: The initial particles, say $\xi_1^0, \dots, \xi_N^0$, are sampled iid according to some initial distribution $\mu_0$, where for simplicity we assume $\mu_0(E \setminus E_0)=1$.
			Then, iteratively we repeat the following to compute $\xi_1^{t+1}, \dots, \xi_N^{t+1}$ given $\xi_1^{t}, \dots, \xi_N^{t}$, assuming $G(\xi_j^t) \neq 0$ for at least one $j$: 
			\begin{itemize}
				\item[1.] \textbf{Resampling:} Select\footnote{Here repetition is allowed, and if a particle is selected more than once, this refers to generating iid copies that can evolve independently afterwards. } $N$ particles, say $\widetilde{\xi}_1^{t+1} , \dots , \widetilde{\xi}_N^{t+1}$ from $\xi_1^{t}, \dots, \xi_N^{t}$ 	so that the expected number of copies of $\xi_j^t$ in $\widetilde{\xi}_1^{t}, \dots, \widetilde{\xi}_N^{t}$ is $W_j^t$ given by 	 
				\[
				W_j^{t} = \frac{G(\xi_j^{t})}{\sum_{i = 1}^N G(\xi_i^{t})},
				\]
				\item[2.] \textbf{Evolution:} Compute $\xi_1^{t+1}, \dots, \xi_N^{t+1}$ by sampling independently
				\[
				\xi_j^{t+1} \sim M(\widetilde{\xi}_j^t, \cdot). 
				\]
			\end{itemize}
			
			Let us discuss this generic scheme in the nuclear context. In this setting, $G(x)$, for $x \in E$, corresponds to the average number of offspring. The Markov kernel $M(x, \cdot)$ is related to evolving a particle that was generated in $x \in E$ (through fission) to its next fission event.   
			
			In the SMC literature this scheme is called the \textit{bootstrap particle filter}, cf. \cite{ChopinOmiros} and also \cite{whiteley2017calculating} in the context of eigenvalue estimation. 
			
			\subsection{Estimators for $\keff$}
			In this section we explain how we can estimate $\keff$. The main idea is to use the relations of Proposition \ref{prop:lambda-as-feynman-kac-limit}.
			To this end, we have to estimate for $n \in \N$, respectively, 
			\[
			\gamma_n (\mathds{1}), \qquad \text{and/or} \qquad \eta_n(G),
			\]
			by using the sequence of particles $(\xi_1^t, \dots , \xi_N^t)_{t \in \N}$ from the bootstrap particle filter discussed in the previous subsection. 
			This can be done by 
			\begin{equation}\label{equ:eta_estimator_definition}
				\eta_t^N (\cdot ) = \frac{1}{N} \sum_{j=1}^{N} \delta_{\xi_j^t}(\cdot),
			\end{equation}
			where $\delta_x(\cdot)$ denotes the Dirac delta distribution with mass at $x \in E$. 
			The above discussion suggests that estimating $\keff$, can be done via arithmetic or geometric means, that is  
			\begin{equation}\label{equ:keff_estimator_arithmetic_mean}
				\kfrak_{T,k}^{ari} = \frac{1}{k}\sum_{t=T+1}^{T+k} \eta_t^N (G) \qquad \text{and} \qquad 	\kfrak_{T, k}^{geo} = \left( \prod_{t=T+1}^{T+k} \eta_t^N (G) \right)^\frac1{k}.
			\end{equation}
			
			\section{$L^p$ error bounds}\label{sec:L_p_error_bounds}
			In this section we present our theoretical results about the stability of the estimators $\kfrak_{T,k}^{ari}$ and $\kfrak_{T,k}^{geo}$ of \eqref{equ:keff_estimator_arithmetic_mean}.
			Here and for the rest of this section we fix some initial distribution $\mu_0$ satisfying $\mu_0(E\setminus E_0) =1$ for the initial particles $\xi_1^0, \dots, \xi_N^0$. Implicitly, the probability space $(\Omega, \mathcal{F}, \mathbb{P})$, and hence also $\E$, depend on this. However, we omit this dependence to increase the readability. 
			
			\subsection{Extinction probabilities}
			
			Since we work in a setting where $G(x) =0$ for certain $x \in E$, it is possible that in the whole particle system in the generic particle algorithm of Section \ref{sec:smc} dies out. 
			More precisely, this means that from generation $t$ to $t+1$, after the resampling step, all new particles $\xi_1^{t+1}, \dots , \xi_N^{t+1}$ are in $E_0$, where they are considered as killed. 
			We define the first time that this happens as $\tau(N)$, i.e.,
			
			\begin{equation}\label{equ:definition_extintion_time}
				\tau(N) = \inf\left\{t \in \N \colon (\xi_1^t, \dots , \xi_N^t) \in \left(E_0\right) ^N   \right\} .
			\end{equation}
			Note that equivalently, $\tau(N) =  \inf\left\{t \in \N \colon G(\xi_j^t) = 0 \text{ for all } 1 \leq j \leq N  \right\}$. Hence, 
			\begin{equation}\label{equ:extinction_weights}
				\left\{ \tau(N) \geq n \right\} = \bigcap_{k=0}^{n-1} \left\{ \eta_{t}^{N}({G}) > 0 \right\}.
			\end{equation}
			By \eqref{equ:extinction_weights} it follows that $\tau(N)$ is a predictable stopping time w.r.t. the natural filtration of the particles, denoted by $(\mathcal{F}_t)_{t \in \N_0}$.
			
			For the sake of obtaining good estimates for the probability of surviving, we work with the following `quantitative' version of Assumption \ref{assumption:absorption}. 
			
			\begin{assumption}\label{ass:minimal_survival_probability}
				There exists $\kappa\in (0,1)$ such that for any $x \in E\setminus E_0$ we have
				\[
				M(x, E \setminus E_0) \geq \kappa \qquad\text{and} \qquad G(x) \geq \kappa.
				\]
			\end{assumption}
			
			Note that Assumption \ref{ass:minimal_survival_probability} is true in the setting of Section \ref{subsec:Physical_assumptions} due to the uniform bounds on all involved cross-sections.

			The next result is a version of \cite[Theorem 7.4.1]{del_moral_feynman-kac_2004} in the setting of Assumption~\ref{ass:minimal_survival_probability}. In order to obtain explicit constants we include the short proof. 
			
			\begin{proposition}\label{prop:survial_probability_estimate}
				If Assumption \ref{ass:minimal_survival_probability} is satisfied, then for any $t \in \N$ we have 
				\[
				\mathbb{P} \left[  \tau(N) \leq t \right] \leq t e^{-a N},
				\]
				with $a = - \log(1-\kappa) \in (0, \infty)$, where $\log$ refers to the natural logarithm.
			\end{proposition}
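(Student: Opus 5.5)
The plan is a union bound over generations combined with a one-step conditional estimate. Write
\[
\{\tau(N)\le t\}=\bigcup_{s=1}^{t}\{\tau(N)=s\}.
\]
Since $\mu_0(E\setminus E_0)=1$, we have $\tau(N)\ge 1$. It therefore suffices to show that for every $s\in\N$,
\[
\P[\tau(N)=s]\le \P\left[\tau(N)=s \,\middle|\, \tau(N)\ge s\right]\le e^{-aN}=(1-\kappa)^N .
\]
Summing over $s=1,\dots,t$ then gives the claimed bound $t e^{-aN}$.

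For the one-step estimate, condition on $\mathcal{F}_{s-1}$ on the event $\{\tau(N)>s-1\}$. On this event at least one particle $\xi_j^{s-1}$ has $G(\xi_j^{s-1})>0$, so the resampling step is well defined. Every selected particle $\widetilde\xi_i^{s}$ is a copy of some $\xi_j^{s-1}$ with positive weight $W_j^{s-1}$, so $\widetilde\xi_i^{s}\in E\setminus E_0$. This is the one point needing care: particles with $G=0$ receive expected number of copies $0$, and since copy counts are nonnegative they are selected with probability zero.

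In the evolution step the $\xi_i^{s}$ are sampled conditionally independently from $M(\widetilde\xi_i^{s},\cdot)$, given the resampled particles. By Assumption~\ref{ass:minimal_survival_probability} each lands in $E_0$ with conditional probability at most $1-\kappa$. Conditional independence therefore gives
\[
\P\left[(\xi_1^s,\dots,\xi_N^s)\in E_0^N \,\middle|\, \mathcal{F}_{s-1},\ \widetilde\xi^s\right]\le (1-\kappa)^N
\]
on $\{\tau(N)>s-1\}$. Taking expectations over the resampling and over $\mathcal{F}_{s-1}$ restricted to $\{\tau(N)\ge s\}=\{\tau(N)>s-1\}$ yields $\P[\tau(N)=s]\le (1-\kappa)^N$.

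The main obstacle is only bookkeeping: making the conditioning precise so that the resampling mechanism can be arbitrary. The argument needs only that resampled particles lie in $E\setminus E_0$ and that the mutations are conditionally independent. Note also that the lower bound $G\ge\kappa$ is not needed here; $G>0$ on $E\setminus E_0$ follows from the definition of $E_0$. Finally, $a=-\log(1-\kappa)\in(0,\infty)$ because $\kappa\in(0,1)$.
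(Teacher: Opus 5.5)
Your proof is correct and takes the same route as the paper. You use the union bound $\P[\tau(N)\le t]\le\sum_{s=1}^{t}\P[\tau(N)=s]$ together with the one-step bound $(1-\kappa)^N$, which comes from two facts: all resampled particles lie in $E\setminus E_0$, and the mutations are conditionally independent. Two details make your argument slightly tighter than the paper's: you condition on $\mathcal{F}_{s-1}$ and the resampled particles rather than on the event $\{\tau(N)>i\}$, and you justify that zero-weight particles are almost surely never selected. You also correctly note that only the bound $M(x,E\setminus E_0)\ge\kappa$ is used, not $G\ge\kappa$.
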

			
			\begin{proof}
				First, observe that by Assumption~\ref{ass:minimal_survival_probability}, $\mathbb{P}[\tau(N) >i] >0 $ for $i \in \N$.
				
				Note that if the system has survived until time $i$, then $\xi_j^i \in E \setminus E_0 $ for at least one index $j \in \{1, \dots, N\}$. In this case, by definition of the Bootstrap particle filter, $\widetilde{\xi}_j^{i+1} \in E \setminus E_0$ for any $j \in \{1, \dots, N\}$. Hence, by Assumption \ref{ass:minimal_survival_probability} we obtain $\mathbb{P} \left[ \xi_j^{i+1} \in E_0 \; \vert \;\tau(N) >i \right] \leq 1-\kappa$. Since particles evolve independently given $\widetilde{\xi}_1^{i}, \dots, \widetilde{\xi}_N^i$, the probability of `dying out from iteration $i$ to $i+1$' is bounded by
				\[
				\mathbb{P} \left[ \tau(N) = i+1 \;\vert \;\tau(N) >i \right] =
				\mathbb{P} \left[ \bigcap_{j=1}^N\{\xi_j^{i+1} \in E_0\} \; \bigg\vert \;\tau(N) >i \right] \leq (1-\kappa)^N.
				\]
				Hence, $\mathbb{P}\left[ \tau(N) = i+1 \right] = 
				\mathbb{P} \left[ \tau(N) = i+1 \, \vert \, \tau(N) > i\right] \cdot \mathbb{P}[\tau(N) >i] \leq (1- \kappa)^N$.
				Finally, 
				\[
				\mathbb{P}[\tau(N) \leq t] = \sum_{i = 1}^t \mathbb{P}[\tau(N) = i] \leq t (1-\kappa)^N.
				\]
			\end{proof}
			
			\subsection{Error bounds}
			
			In this section we present the error bounds for the estimators $\kfrak_{T, k}^{ari}$ and $\kfrak_{T, k}^{geo}$. First, we study how well we can approximate the measures $(\eta_{t})_{t \in \N}$. Then we move to our actual error bounds. 
			
			\subsubsection{Stability of the particle measures $\eta_{t}^N$}

			The estimators $\kfrak_{T, k}^{ari}$ and $\kfrak_{T, k}^{geo}$ both make use of the particle measures $\eta_{t}^N$, which serve as approximation for the flow $(\eta_{t})_{t \in \N}$. 
			It is therefore useful to understand how well we recover this flow in order to prove bounds for estimating $\keff$.

			\begin{assumption} \label{assumption_E_p} Let $p \in [1, \infty)$.
				There exists a constant $C_p \in (0, \infty)$, such that for any
				measurable and bounded $f\colon E \to \R$ and any $t \in \N$ we have
				\[
				\E \left[\mathds{1}_{\{\tau(N) > t\}} \left\vert \eta_{t}^N(f) - \Phi(\eta_{t-1}^N) (f) \right\vert^p \, \bigg\vert \, \mathcal{F}_{t-1}\right]^{1/p} \leq \mathds{1}_{\{\tau(N) > t\}}\frac{C_p \norm{f}}{\sqrt{N}}.
				\]
			\end{assumption}
			
			The main result of this section is the following.
			
			\begin{theorem}\label{thm:stability_particle_approx}
				Suppose Assumptions \ref{ass_perron_frobenius_assumption}, \ref{ass:minimal_survival_probability} and \ref{assumption_E_p} (for some $p \in [1, \infty)$) are true.
				Then, there exists some constant $K_p \in (0, \infty)$ such that for any $t \in \N$ and bounded measurable $f \colon E \to \R$ we have 
				\[
				\E\left[\mathds{1}_{\{\tau(N)> t\}} \left\vert \eta_{t}(f) - \eta_{t}^N(f)  \right\vert^p  \right]^{1/p} \leq \frac{K_p \norm{f}}{\sqrt{N}}. 
				\]  
			\end{theorem}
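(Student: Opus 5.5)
The plan is the standard telescoping decomposition for Feynman--Kac particle approximations. Write $\Phi_{s,t} = \Phi^{t-s}$ for the semigroup of the normalised flow, so that $\eta_t = \Phi_{s,t}(\eta_s)$. On $\{\tau(N) > t\}$ every $\eta_s^N$ with $s \le t-1$ lies in $\unkilledprobmeas{E}$, and we decompose
\[
\eta_t^N - \eta_t = \sum_{s=1}^{t} \Bigl[ \Phi_{s,t}(\eta_s^N) - \Phi_{s,t}\bigl(\Phi(\eta_{s-1}^N)\bigr) \Bigr] + \Bigl[\Phi_{0,t}(\eta_0^N) - \Phi_{0,t}(\eta_0)\Bigr].
\]
Each term is a local sampling error at time $s$, propagated by $\Phi_{s,t}$. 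The goal is to show that the $s$-th term has conditional $L^p$ norm at most $C \tau^{t-s}\norm{f}/\sqrt{N}$. Summing the geometric series then gives a constant $K_p$ that does not depend on $t$, and Minkowski's inequality plus $\mathds{1}_{\{\tau(N)>t\}} \le \mathds{1}_{\{\tau(N)>s\}}$ handle the indicator.

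For the propagated error we use the representation $\Phi_{s,t}(\mu)(f) = \mu Q^{t-s}(f)/\mu Q^{t-s}(\mathds{1})$ and set $n = t-s$. Define $h_n = Q^n(\mathds{1})/\keff^n$ and $P_n f = Q^n(f)/Q^n(\mathds{1})$. Then, for $\mu, \nu$ supported on $E\setminus E_0$,
\[
\Phi_{s,t}(\mu)(f) - \Phi_{s,t}(\nu)(f) = \frac{\mu(h_n P_n f)}{\mu(h_n)} - \frac{\nu(h_n P_n f)}{\nu(h_n)}.
\]
Particles in $E_0$ contribute nothing to $Q^n$, so we may restrict to $E\setminus E_0$. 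By Assumption~\ref{ass_perron_frobenius_assumption}, once $n \ge n_0$ we have $h_n \in [\underline{\omega}/2, 2\overline{\omega}]$ on $E\setminus E_0$. By Proposition~\ref{prop:exponential_congergence_in_total_variation}, $P_n f = \eta(f) + r_n$ with $\norm{r_n} \le \widetilde{C}\tau^n \norm{f}$.

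The constant $\eta(f)$ cancels, since it appears in both ratios. Hence the difference equals
\[
\frac{\mu(h_n r_n)}{\mu(h_n)} - \frac{\nu(h_n r_n)}{\nu(h_n)} = \frac{(\mu-\nu)(h_n r_n)}{\mu(h_n)} - \frac{\nu(h_n r_n)\,(\mu-\nu)(h_n)}{\mu(h_n)\,\nu(h_n)}.
\]
Apply this with $\mu = \eta_s^N$ and $\nu = \Phi(\eta_{s-1}^N)$, noting that $h_n r_n$ and $h_n$ are bounded by $C\tau^n\norm{f}$ and $C$ respectively. Assumption~\ref{assumption_E_p}, applied conditionally on $\mathcal{F}_{s-1}$, then gives the claimed $\tau^{n}/\sqrt{N}$ bound, provided the denominators $\mu(h_n)$ stay bounded below.

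I expect the main obstacle to be exactly these denominators. The quantity $\eta_s^N(h_n)$ could be small when most particles sit in $E_0$: it is at least $\eta_s^N(E\setminus E_0)\,\underline{\omega}/2$, but that fraction is random. To handle this I would work relative to the killed mass. Write $\eta_s^N(h_n) = \eta_s^N(E\setminus E_0)\,\bar\mu(h_n)$, where $\bar\mu$ is the normalised restriction to $E\setminus E_0$. The factor $\eta_s^N(E\setminus E_0)$ then appears in both numerator and denominator, because $h_n$ and $h_n r_n$ vanish on $E_0$. The difficulty therefore moves to controlling the fluctuations of the normalisation $\eta_s^N(\mathds{1}_{E\setminus E_0})$ itself.

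One route is a simple two-step bound. Since $\Phi(\eta_{s-1}^N)(E\setminus E_0) \ge \kappa$ by Assumption~\ref{ass:minimal_survival_probability}, the event $\{\eta_s^N(E\setminus E_0) < \kappa/2\}$ has conditional probability at most $C_q N^{-q/2}$ by Assumption~\ref{assumption_E_p} applied with a large $q$ and Markov's inequality. On that event we use the trivial bound $2\norm{f}$.

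If only a single $p$ is assumed, I would instead bound the error directly through the identity
\[
\frac{a}{b} - \frac{a'}{b'} = \frac{a-a'}{b'} - \frac{a}{b}\cdot\frac{b-b'}{b'},
\]
using the deterministic lower bound $b' = \nu(h_n) \ge \kappa\,\underline{\omega}/2$ together with $|a/b| \le C\tau^n\norm{f}$. This avoids any random denominator.

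For $s$ with $t-s < n_0$ (and for small $t$), I would use the crude one-step Lipschitz property of $\Phi$ via the same identity, with $h$ replaced by $G \ge \kappa$ on $E\setminus E_0$. This costs only a finite constant. The initial term is handled in the same way, using iid sampling from $\mu_0$ (taking $\mu_0 = \eta_0$) and Marcinkiewicz--Zygmund.
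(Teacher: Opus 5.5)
Your final argument (telescoping, then the identity $\frac{a}{b}-\frac{a'}{b'}=\frac{a-a'}{b'}-\frac{a}{b}\cdot\frac{b-b'}{b'}$ with $|a/b|\le \widetilde{C}\tau^{t-s}\norm{f}$ and the $\mathcal{F}_{s-1}$-measurable, uniformly lower-bounded denominator $b'=\Phi(\eta_{s-1}^N)(h_{t-s})$) is essentially the paper's proof: the two resulting pieces are exactly Lemmas~\ref{lem:stability_particle_approx_1} and~\ref{lem:stability_particle_approx_2}, with centering at $\eta(f)$ instead of $\eta_t(f)$, and your separate treatment of $t-s<n_0$ is more careful than the paper's appeal to uniform convergence of $\omega_{t-s}$. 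Your first ``route'' is unnecessary, and as written it would not give a $t$-uniform constant: on the bad event you should use the bound $2\widetilde{C}\tau^{t-s}\norm{f}$ rather than $2\norm{f}$.
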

			
			We split the proof into several parts. 
			The main idea is to use following error decomposition:
			set $f_t = f- \eta_{t}(f)$ and $A_t =\{\tau(N)> t\}$.
			Under the assumptions and using the same notation as in Theorem~\ref{thm:stability_particle_approx} we have 
			\begin{align}\label{equ:error_decomp}
				\E\left[\mathds{1}_{A_t} \left\vert \eta_{t}(f) - \eta_{t}^N(f)  \right\vert^p  \right]^{1/p} 
				&\leq  
				\E\left[\mathds{1}_{A_t} \left\vert \eta_{t}^N(f_t) - \Phi(\eta_{t-1}^N)(f_t) \right\vert^p  \right]^{1/p}  \\
				&\quad +\sum_{k=1}^{t-1} \E\left[ \mathds{1}_{A_t} \left\vert \Phi^{t-k} (\eta_k^N) (f_t) - \Phi^{t-k+1} (\eta_{k-1}^N) (f_t)  \right\vert^p \right]^{1/p} \nonumber\\ 
				&\quad +
				\E\left[ \mathds{1}_{A_t} \left\vert \Phi^t(\eta_0^N)(f_t) \right\vert^p  \right]^{1/p}. \nonumber
			\end{align}
			
			To prove Theorem \ref{thm:stability_particle_approx} we now estimate
			all terms appearing in \eqref{equ:error_decomp}. We do so first show
			several auxiliary results.

			\begin{lemma} \label{lem:bound_semigroup}
				With the assumptions and notations as in Theorem~\ref{thm:stability_particle_approx}, there exists a constant $C\in (0, \infty)$ such that for any $t, k \in \N$ with $k \leq t$ and any bounded and measurable $f$ we have  
				\[
				\vert Q^{t-k} (f_t)(x) \vert \le C\tau^{t-k} \norm{f} Q^{t-k}(\mathds{1})(x),
				\]
				for any $x \in E$. 
			\end{lemma}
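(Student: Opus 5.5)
The plan is to split $f_t = f - \eta_t(f)$ into a part centred at the limiting eigenmeasure and a deterministic correction:
\[
f_t = \bigl(f - \eta(f)\bigr) + \bigl(\eta(f) - \eta_t(f)\bigr).
\]
Write $n = t-k \in \{0,1,\dots,t-1\}$. Then
\[
|Q^n(f_t)(x)| \le |Q^n(f-\eta(f))(x)| + |\eta(f)-\eta_t(f)|\, Q^n(\mathds{1})(x).
\]
Each term will be bounded by a multiple of $\tau^n \norm{f} Q^n(\mathds{1})(x)$.

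\emph{Trivial cases.} If $n=0$, then $Q^0 = \mathrm{id}$ and $Q^0(\mathds{1}) = 1$. Since $|f_t| \le 2\norm{f}$, the claim holds with constant $2$. If $n \ge 1$ and $x \in E_0$, then $G(x)=0$, so $Q(\cdot)(x) = G(x)M(\cdot)(x) = 0$. Hence $Q^n(g)(x) = Q(Q^{n-1}g)(x) = 0$ for every $g$, and both sides vanish. It therefore remains to treat $n \ge 1$ and $x \in E\setminus E_0$.

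\emph{First term.} Apply Proposition~\ref{prop:exponential_congergence_in_total_variation} to $g = f/\norm{f}$, which is allowed by linearity (the case $f=0$ is trivial). This gives $|Q^n(f)(x) - \eta(f)Q^n(\mathds{1})(x)| \le \widetilde C\tau^n \norm{f}\, Q^n(\mathds{1})(x)$. Since $Q^n(f-\eta(f)) = Q^n f - \eta(f) Q^n\mathds{1}$, the first term is bounded by $\widetilde C \tau^n\norm{f} Q^n(\mathds{1})(x)$.

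\emph{Second term.} Here I need $|\eta_t(f) - \eta(f)| \le \widetilde C \tau^t \norm{f}$, where $\eta_t = \Phi^t(\mu_0)$ and $\mu_0(E\setminus E_0)=1$. Write
\[
\eta_t(f) - \eta(f) = \frac{\mu_0\bigl(Q^t f - \eta(f) Q^t\mathds{1}\bigr)}{\mu_0 Q^t(\mathds{1})}.
\]
The pointwise estimate from Proposition~\ref{prop:exponential_congergence_in_total_variation} holds on $E\setminus E_0$, which carries all of $\mu_0$. Integrating it against $\mu_0$ bounds the numerator in absolute value by $\widetilde C\tau^t\norm{f}\,\mu_0 Q^t(\mathds{1})$. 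Dividing then gives the claim. This requires $\mu_0 Q^t(\mathds{1})>0$, which follows from Assumption~\ref{ass:minimal_survival_probability}.

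Since $t \ge n$ and $\tau<1$, we have $\tau^t \le \tau^n$, so the second term is at most $\widetilde C\tau^n\norm{f}Q^n(\mathds{1})(x)$. Altogether the lemma holds with $C = \max(2, 2\widetilde C)$.

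The only step needing care is this transfer of the pointwise bound to the flow started from $\mu_0$ (rather than from $\delta_x$). It relies on $\mu_0$ being concentrated on $E\setminus E_0$ and on $\widetilde C$ being uniform in $x$, both of which are given.
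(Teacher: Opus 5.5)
Your proof is correct and is essentially the paper's argument: centre at $\eta(f)$, apply Proposition~\ref{prop:exponential_congergence_in_total_variation} once from $\delta_x$ over $t-k$ steps and once from $\mu_0$ over $t$ steps, and observe that both sides vanish on $E_0$. Two points the paper leaves implicit are made explicit in your version: the case $t=k$, where $Q^0(\mathds{1})=1$ does not vanish on $E_0$, and the integration against $\mu_0$ that transfers the pointwise bound to $\Phi^t(\mu_0)$.
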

			
			\begin{proof}
				Let $x \in E \setminus E_0$ and $P_{t-k}g(x) = \frac{Q^{t-k}g(x)}{Q^{t-k}\mathds{1}(x)}$, for sufficiently regular $g$. Then, 
				\[
				Q^{t-k} (f_t)(x) = P_{t-k}(f_t)(x) Q^{t-k}(\mathds{1}) (x).
				\]
				Note that $P_{t-k}(f_t)(x) = P_{t-k}(f)(x) - \eta_t(f) = \Phi^{t-k}(\delta_x)(f) - \Phi^t(\mu_0)(f)$, such that 
				\[
				\vert P_{t-k}(f_t)(x) \vert \leq \vert \Phi^{t-k}(\delta_x)(f) - \eta(f) \vert + \vert  \Phi^t(\mu_0)(f) - \eta(f) \vert \leq 2\widetilde{C}\norm{f}\tau^{t-k}, 
				\]
				by Proposition \ref{prop:exponential_congergence_in_total_variation}. This implies the statement for $x \in E \setminus E_0$. 
				
				For the case $x \in E_0$ we observe that $Q^{t-k}(\mathds{1})(x) = Q^{t-k}(f)(x) =0$, which completes the proof. 
			\end{proof}
			
			\begin{lemma}\label{lem:stability_particle_approx_1}
				With the assumptions and notations as in Theorem~\ref{thm:stability_particle_approx}, there exists a constant $B\in (0, \infty)$ such that for any $t, k \in \N$ with $k \leq t$ and any bounded and measurable $f$ we have  
				\[
				\E\left[ \mathds{1}_{A_t} \left\vert \frac{\eta_{k}^N Q^{t-k} (f_t)}{\eta_{k}^N Q^{t-k}(\mathds{1})} - \frac{\eta_{k}^N Q^{t-k} (f_t)}{\Phi(\eta_{k-1}^N) Q^{t-k}(\mathds{1})} \right\vert^p \right]^{1/p} \leq \frac{B \tau^{t-k} \norm{f}}{\sqrt{N}}.
				\]
			\end{lemma}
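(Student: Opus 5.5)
Write $g = Q^{t-k}(\mathds{1})$ and $h = Q^{t-k}(f_t)$. The difference inside the expectation factorises as
\[
\frac{\eta_k^N(h)}{\eta_k^N(g)} - \frac{\eta_k^N(h)}{\Phi(\eta_{k-1}^N)(g)}
= \frac{\eta_k^N(h)}{\eta_k^N(g)}\cdot\frac{\Phi(\eta_{k-1}^N)(g) - \eta_k^N(g)}{\Phi(\eta_{k-1}^N)(g)} .
\]
The plan is to bound the first factor deterministically by $C\tau^{t-k}\norm{f}$ and the second in conditional $L^p$ by $C'/\sqrt{N}$ using Assumption~\ref{assumption_E_p}.

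\textbf{First factor.} Lemma~\ref{lem:bound_semigroup} gives $|h(x)| \le C\tau^{t-k}\norm{f}\,g(x)$ for every $x\in E$. Integrating this pointwise bound against the positive measure $\eta_k^N$ yields $|\eta_k^N(h)| \le C\tau^{t-k}\norm{f}\,\eta_k^N(g)$. On $A_t\subseteq A_k$ we have $\eta_k^N(g)>0$, because some particle lies in $E\setminus E_0$ and Assumption~\ref{ass:minimal_survival_probability} forces $g>0$ there. Hence $|\eta_k^N(h)/\eta_k^N(g)| \le C\tau^{t-k}\norm{f}$ on $A_t$.

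\textbf{Second factor.} We need a uniform comparison between $g = Q^{t-k}(\mathds{1})$ and its size. By Assumption~\ref{ass_perron_frobenius_assumption} with $g\equiv 1$, for $x\in E\setminus E_0$,
\[
\keff^{-(t-k)} g(x) \in \bigl[\omega(x) - C\tau^{t-k},\ \omega(x) + C\tau^{t-k}\bigr].
\]
Using the uniform bounds on $\omega$, for $t-k\ge n_0$ the normalised function $\tilde g = \keff^{-(t-k)} g$ satisfies $c_1\le \tilde g\le c_2$ on $E\setminus E_0$, and $\tilde g=0$ on $E_0$. For the finitely many $t-k<n_0$, Assumption~\ref{ass:minimal_survival_probability} and boundedness of $G$ give the same two-sided bounds, since $G\ge\kappa$ and $M(x,E\setminus E_0)\ge\kappa$ iterate to $Q^n(\mathds{1})(x)\ge\kappa^{2n}$. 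Because the ratio is invariant under scaling $g$ by $\keff^{-(t-k)}$, we may replace $g$ by $\tilde g$ with $\norm{\tilde g}\le c_2$.

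We also need the denominator bounded below. By definition $\Phi(\eta_{k-1}^N)=\eta_{k-1}^N Q/\eta_{k-1}^N(G)$, and this measure charges only the support of $M(x,\cdot)$ for particles $x\notin E_0$. It gives mass at least $\kappa$ to $E\setminus E_0$: it is a $G$-weighted average of $M(x_j,\cdot)$ over particles with $G(x_j)>0$, each of which satisfies $M(x_j,E\setminus E_0)\ge\kappa$. Therefore $\Phi(\eta_{k-1}^N)(\tilde g)\ge \kappa c_1$, and this bound is $\mathcal{F}_{k-1}$-measurable.

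\textbf{Combining.} On $A_t$ the absolute value is at most
\[
\frac{C\tau^{t-k}\norm{f}}{\kappa c_1}\,\bigl|\eta_k^N(\tilde g)-\Phi(\eta_{k-1}^N)(\tilde g)\bigr| .
\]
Use $\mathds{1}_{A_t}\le\mathds{1}_{A_k}$, condition on $\mathcal{F}_{k-1}$, and apply Assumption~\ref{assumption_E_p} to $\tilde g$. This gives a bound $C_p c_2/\sqrt{N}$, and taking expectations yields the claim with $B = C C_p c_2/(\kappa c_1)$.

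The main obstacle is this uniform two-sided control of $Q^{t-k}(\mathds{1})/\keff^{t-k}$ together with the lower bound on the random denominator $\Phi(\eta_{k-1}^N)(g)$. The case $k=t$ and small $t-k$ must be handled separately via Assumption~\ref{ass:minimal_survival_probability}. A further point needing care is that Assumption~\ref{assumption_E_p} is stated with $\mathds{1}_{\{\tau(N)>k\}}$, so the indicator must be reduced from $A_t$ to $A_k$ before conditioning.
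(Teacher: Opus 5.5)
Your proposal is correct and follows the paper's proof. Both arguments factor the difference using Lemma~\ref{lem:bound_semigroup}, normalise $Q^{t-k}(\mathds{1})$ by $\keff^{-(t-k)}$, bound $\Phi(\eta_{k-1}^N)$ of the normalised function from below via $M(x,E\setminus E_0)\ge\kappa$ and the lower bound on $\omega$, and then apply Assumption~\ref{assumption_E_p} to that function. Two of your steps are slightly more careful than the paper, which relies on uniform convergence $\omega_n\to\omega$ and on conditioning on $\mathcal{F}_{k-1}$. You treat small $t-k$ separately via $Q^n(\mathds{1})\ge\kappa^{2n}$ on $E\setminus E_0$, and you bound the denominator by a deterministic constant before conditioning, which removes any need for $\mathcal{F}_{k-1}$-measurability of $\mathds{1}_{\{\tau(N)>k\}}$.
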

			\begin{proof}
				For the sake of brevity write $a = \eta_{k}^N Q^{t-k}(\mathds{1})$ and $b= \Phi(\eta_{k-1}^N) Q^{t-k}(\mathds{1})$. 
				Using Lemma \ref{lem:bound_semigroup} we deduce that on the event $\{\tau(N)> k \}$ we have
				\begin{align*}
					\vert \eta_{k}^N Q^{t-k} (f_t) \vert \left\vert \frac{1}{a} - \frac{1}{b} \right\vert  &\leq 
					C \tau^{t-k} \norm{f} \eta_{k}^N Q^{t-k}(\mathds{1})  \left\vert \frac{1}{a} - \frac{1}{b} \right\vert \\
					&= 
					C \tau^{t-k} \norm{f}\frac{1}{\keff^{-t+k}b} \, \left\vert \keff^{-t+k}a - \keff^{-t+k}b\right\vert.
				\end{align*}  
				Additionally, since $b$ and $\mathds{1}_{\{ \tau(N) > k \}}$ are $\mathcal{F}_{k-1}$-measurable, Assumption \ref{assumption_E_p} implies 
				\[
				\E\left[ \mathds{1}_{\{ \tau(N) > k \}}\left( \frac{1}{\keff^{-t+k}b} \, \left\vert \keff^{-t+k}a - \keff^{-t+k}b \right\vert\right)^p \bigg\vert \mathcal{F}_{k-1}\right]^{1/p} \leq \frac{\mathds{1}_{\{ \tau(N) > k \}}}{\keff^{-t+k}b} \frac{C_p \norm{\omega_{t-k}}}{\sqrt{N}},
				\]
				where $\omega_{t-k} = \keff^{-t+k} Q^{t-k}(\mathds{1})$.
				
				On the event ${\{ \tau(N) > k \}}$ we have 
				\[
				\frac{1}{\keff^{-t+k}b} = \frac{1}{\Phi(\eta_{k-1}^N)(\omega_{t-k})} = \frac{\sum_{j=1}^N G(\xi_j^t)}{\sum_{j=1}^N G(\xi_j^t)M\omega_{t-k}(\xi_j^t)}.
				\]
				Note that $\omega_{n} \to \omega$ uniformly on $E \setminus E_0$, cf. Assumption \ref{ass_perron_frobenius_assumption}, so that 
				\[
				M\omega_{t-k}(x) \geq \int_{E \setminus E_0} \omega_{t-k}(x') M(x, \d x') \geq c \inf_{y \in E \setminus E_0}\omega(y) =: \widetilde{c} >0,
				\]
				for some constant $c >0$ independent of $t, k, N$.
				Hence, 
				\[
				\frac{\mathds{1}_{\{ \tau(N) > k \}}}{\keff^{-t+k}b} \leq \frac{1}{\widetilde{c}}.
				\]
				By the uniform convergence of $\omega_{t-k}$, it follows that $\norm{\omega_{t-k}}$ is uniformly bounded (in $t,k,N$), and the claimed bound is a consequence of the estimates above. 
			\end{proof}
			
			\begin{lemma}\label{lem:stability_particle_approx_2}
				With the assumptions and notations as in Theorem~\ref{thm:stability_particle_approx}, there exists a constant $B\in (0, \infty)$ such that for any $t, k \in \N$ with $k \leq t$ and any bounded and measurable $f$ we have 
				\[
				\E\left[ \mathds{1}_{A_t} \left\vert \frac{\eta_{k-1}^N Q^{t-k+1} (f_t)}{\eta_{k-1}^N Q^{t-k+1}(\mathds{1})} - \frac{\eta_{k}^N Q^{t-k} (f_t)}{\Phi(\eta_{k-1}^N) Q^{t-k}(\mathds{1})} \right\vert^p \right]^{1/p} \leq \frac{B \tau^{t-k} \norm{f}}{\sqrt{N}}
				\]
				with some constant $B \in (0, \infty)$. 
			\end{lemma}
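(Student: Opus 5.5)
The key observation is that the first ratio is exactly $\Phi(\eta_{k-1}^N)Q^{t-k}(f_t)/\Phi(\eta_{k-1}^N)Q^{t-k}(\mathds{1})$. Indeed, by definition of $\Phi$, on the event $\{\tau(N)>k-1\}$ we have
\[
\frac{\eta_{k-1}^N Q^{t-k+1}(g)}{\eta_{k-1}^N Q^{t-k+1}(\mathds{1})} = \frac{\eta_{k-1}^N Q(Q^{t-k}g)/\eta_{k-1}^N(G)}{\eta_{k-1}^N Q(Q^{t-k}\mathds{1})/\eta_{k-1}^N(G)} = \frac{\Phi(\eta_{k-1}^N)(Q^{t-k}g)}{\Phi(\eta_{k-1}^N)(Q^{t-k}\mathds{1})}.
\]
Both fractions in the statement then share the $\mathcal{F}_{k-1}$-measurable denominator $b=\Phi(\eta_{k-1}^N)Q^{t-k}(\mathds{1})$. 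Since $A_t\subseteq\{\tau(N)>k\}$, the difference inside the expectation equals, on $A_t$,
\[
\frac{1}{b}\Big(\Phi(\eta_{k-1}^N)(Q^{t-k}f_t) - \eta_k^N(Q^{t-k}f_t)\Big).
\]
So the whole task is to control the one-step sampling error of the test function $h=Q^{t-k}f_t$, normalised by $b$.

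\textbf{Step 1: rescale.} Multiply numerator and denominator by $\keff^{-(t-k)}$ and set $h' = \keff^{-(t-k)}Q^{t-k}(f_t)$. By Lemma~\ref{lem:bound_semigroup},
\[
|h'(x)|\le C\tau^{t-k}\norm{f}\,\omega_{t-k}(x),
\]
with $\omega_{t-k}=\keff^{-(t-k)}Q^{t-k}(\mathds{1})$. As argued in the proof of Lemma~\ref{lem:stability_particle_approx_1}, $\norm{\omega_{t-k}}$ is bounded uniformly in $t,k$. This follows from Assumption~\ref{ass_perron_frobenius_assumption}, since $|\omega_{n}-\omega|\le C\tau^n$ on $E\setminus E_0$ and $\omega_n=0$ on $E_0$. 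Hence $\norm{h'}\le C'\tau^{t-k}\norm{f}$.

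\textbf{Step 2: condition and apply Assumption~\ref{assumption_E_p}.} Bound $\mathds{1}_{A_t}\le\mathds{1}_{\{\tau(N)>k\}}$ and condition on $\mathcal{F}_{k-1}$. Since $\keff^{-(t-k)}b$ and $\mathds{1}_{\{\tau(N)>k\}}$ are $\mathcal{F}_{k-1}$-measurable (the latter by predictability of $\tau(N)$, as used in Lemma~\ref{lem:stability_particle_approx_1}), Assumption~\ref{assumption_E_p} applied to $h'$ gives a conditional $L^p$ bound of
\[
\frac{\mathds{1}_{\{\tau(N)>k\}}}{\keff^{-(t-k)}b}\,\frac{C_p C'\tau^{t-k}\norm{f}}{\sqrt N}.
\]

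\textbf{Step 3: lower-bound the denominator.} Reuse the estimate from the proof of Lemma~\ref{lem:stability_particle_approx_1}. On $\{\tau(N)>k\}$,
\[
\keff^{-(t-k)}b=\Phi(\eta_{k-1}^N)(\omega_{t-k}),
\]
which is a $G$-weighted average of $M\omega_{t-k}(\xi_j^{k-1})$ over particles with $G>0$. Each such term is at least
\[
\kappa\Big(\inf_{E\setminus E_0}\omega - C\tau^{t-k}\Big),
\]
using Assumption~\ref{ass:minimal_survival_probability}. For $t-k\ge n_0$ this gives a uniform $\widetilde c>0$. For the finitely many $t-k<n_0$, I would instead use $\omega_n\ge \keff^{-n}\kappa^{n}$ on $E\setminus E_0$, obtained by iterating $G\ge\kappa$ and $M(x,E\setminus E_0)\ge\kappa$. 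Taking expectations then yields the claim with $B=C_pC'/\widetilde c$.

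\textbf{Main obstacle.} The main obstacle is the uniform lower bound on the denominator $\keff^{-(t-k)}b$ together with the uniform sup bound on $\omega_{t-k}$. Both must hold uniformly in $t,k,N$, including small $t-k$ where the Perron--Frobenius convergence is not yet effective. That is why the direct lower bound via $\kappa$ is needed for those finitely many cases.
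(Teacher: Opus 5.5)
Your proposal is correct and follows essentially the same route as the paper: rewrite the first ratio via $\Phi$ so that both terms share the denominator $b=\Phi(\eta_{k-1}^N)Q^{t-k}(\mathds{1})$, control $\norm{Q^{t-k}(f_t)}$ by Lemma~\ref{lem:bound_semigroup} and the uniform boundedness of $\omega_{t-k}$, apply Assumption~\ref{assumption_E_p} conditionally on $\mathcal{F}_{k-1}$, and reuse the lower bound on $\keff^{-(t-k)}b$ from Lemma~\ref{lem:stability_particle_approx_1}. Your separate treatment of small $t-k$ improves on the paper, whose bound $M\omega_{t-k}\geq c\inf\omega$ implicitly needs $t-k$ large, and two slips are harmless: iterating the $\kappa$-bounds gives $\omega_n\geq\keff^{-n}\kappa^{2n-1}$ (not $\keff^{-n}\kappa^{n}$) on $E\setminus E_0$, and since $\{\tau(N)>k\}$ is in general only $\mathcal{F}_k$-measurable ($\tau(N)$ is a stopping time, not predictable, so it is $\{\tau(N)>k-1\}$ that lies in $\mathcal{F}_{k-1}$), you should pull out the $\mathcal{F}_{k-1}$-measurable factor $\mathds{1}_{\{\tau(N)>k-1\}}/b$ before conditioning, which works because your denominator bound already holds on $\{\tau(N)>k-1\}$.
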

			\begin{proof}
				On the event $\{\tau(N) >k\}$, from the definition of $\Phi$, we have 
				\begin{align*}
					\left\vert \frac{\eta_{k-1}^N Q^{t-k+1} (f_t)}{\eta_{k-1}^N Q^{t-k+1}(\mathds{1})} - \frac{\eta_{k}^N Q^{t-k} (f_t)}{\Phi(\eta_{k-1}^N) Q^{t-k}(\mathds{1})} \right\vert
					=
					\left\vert \frac{\Phi(\eta_{k-1}^N) Q^{t-k} (f_t)}{\Phi(\eta_{k-1}^N) Q^{t-k}(\mathds{1})} - \frac{\eta_{k}^N Q^{t-k} (f_t)}{\Phi(\eta_{k-1}^N) Q^{t-k}(\mathds{1})}  \right\vert \\
					=
					\frac{1} {\Phi(\eta_{k-1}^N) Q^{t-k}(\mathds{1})} \left\vert {\Phi(\eta_{k-1}^N) Q^{t-k} (f_t)} - {\eta_{k}^N Q^{t-k} (f_t)}\right\vert .
				\end{align*}
				Using $\mathcal{F}_{k-1}$-measurability of $\mathds{1}_{\{\tau(N) > k\}}$ and $\Phi(\eta_{k-1}^N) Q^{n-k} (f_t)$, as well as Assumption~\ref{assumption_E_p} and Lemma \ref{lem:bound_semigroup} we deduce that 
				\begin{align*}
					&\E \left[ \mathds{1}_{\{\tau(N) > k\}} \left\vert \frac{\eta_{k-1}^N Q^{t-k+1} (f_t)}{\eta_{k-1}^N Q^{t-k+1}(\mathds{1})} - \frac{\eta_{k}^N Q^{t-k} (f_t)}{\Phi(\eta_{k-1}^N) Q^{t-k}(\mathds{1})} \right\vert^p \right]^{1/p} \\
					&\qquad\leq
					\frac{C_p \norm{Q^{t-k}(f_t)}}{\sqrt{N}} \; \E \left[ \mathds{1}_{\{\tau(N) > k\}}\left\vert \frac{1} {\Phi(\eta_{k-1}^N) Q^{t-k}(\mathds{1})} \right\vert^p  \right]^{1/p} \\
					&\qquad\leq
					\frac{C_p C \tau^{t-k} \norm{Q^{t-k}(\mathds{1})}}{\sqrt{N}} \; \E \left[ \mathds{1}_{\{\tau(N) > k\}}\left\vert \frac{1} {\Phi(\eta_{k-1}^N) Q^{t-k}(\mathds{1})} \right\vert^p  \right]^{1/p} \\
					&\qquad=
					\frac{C_p C \tau^{t-k} \norm{\keff^{-t+k}Q^{t-k}(\mathds{1})}}{\sqrt{N}} \; \E \left[ \mathds{1}_{\{\tau(N) > k\}}\left\vert \frac{1} {\Phi(\eta_{k-1}^N) (\keff^{-t+k}Q^{t-k}(\mathds{1}))} \right\vert^p  \right]^{1/p}.
				\end{align*}
				Again, we write $\omega_{t-k} = \keff^{-t+k}Q^{t-k}(\mathds{1})$.
				Using the uniform convergence of $\omega_{t-k}$, see Assumption \ref{ass_perron_frobenius_assumption}, it follows that $\norm{\omega_{t-k}}$ is bounded uniformly in $t,k,N$. 
				
				Using the same steps as in the proof of Lemma \ref{lem:stability_particle_approx_1} (see the bound for $1/(\keff^{t-k}b)$ there), it follows that 
				\[
				\E \left[ \mathds{1}_{\{\tau(N) > k\}}\left\vert \frac{1} {\Phi(\eta_{k-1}^N) (\keff^{-t+k}Q^{t-k}(\mathds{1}))} \right\vert^p  \right]^{1/p} \leq \frac{1}{\widetilde{c}} 
				\]
				for a constant $\widetilde{c}> 0$ independent of $t, k , N$. This implies the desired result. 
			\end{proof}
			
			\begin{proof}[Proof of Theorem \ref{thm:stability_particle_approx}]
				We estimate all terms appearing in the error decomposition \eqref{equ:error_decomp}.
				By Assumption~\ref{assumption_E_p} we obtain 
				\[
				\E\left[\mathds{1}_{A_t} \left\vert \eta_{t}^N(f_t) - \Phi(\eta_{t-1}^N)(f_t) \right\vert^p  \right]^{1/p} 
				\leq 
				\frac{C_p\norm{f_t}}{\sqrt{N}} \leq \frac{2C_p\norm{f}}{\sqrt{N}}.
				\]
				Next we bound $\sum_{k=1}^{t-1} \E\left[ \mathds{1}_{A_t} \left\vert \Phi^{t-k} (\eta_k^N) (f_t) - \Phi^{t-k+1} (\eta_{k-1}^N) (f_t)  \right\vert^p \right]^{1/p}$. 
				By Lemma \ref{lem:stability_particle_approx_1} and Lemma \ref{lem:stability_particle_approx_2} it follows 
				\begin{align*}
					&\E\left[ \mathds{1}_{A_t} \left\vert \Phi^{t-k} (\eta_k^N) (f_t) - \Phi^{t-k+1} (\eta_{k-1}^N) (f_t)  \right\vert^p \right]^{1/p} \\
					& \qquad \leq \E\left[ \mathds{1}_{A_t} \left\vert \frac{\eta_{k}^N Q^{t-k} (f_t)}{\eta_{k}^N Q^{t-k}(\mathds{1})} - \frac{\eta_{k}^N Q^{t-k} (f_t)}{\Phi(\eta_{k-1}^N) Q^{n-k}(\mathds{1})} \right\vert^p \right]^{1/p}\\
					&\qquad\qquad+	
					\E\left[ \mathds{1}_{A_t} \left\vert \frac{\eta_{k-1}^N Q^{t-k+1} (f_t)}{\eta_{k-1}^N Q^{t-k+1}(\mathds{1})} - \frac{\eta_{k}^N Q^{t-k} (f_t)}{\Phi(\eta_{k-1}^N) Q^{t-k}(\mathds{1})} \right\vert^p \right]^{1/p}\\
					&\qquad\leq
					\frac{\widetilde{B} \norm{f} \tau^{t-k}}{\sqrt{N}},
				\end{align*}
				with $\widetilde{B} \in (0, \infty)$ being some constant independent of $t, k ,N$.
				Hence 
				\[
				\sum_{k=1}^{t-1} \E\left[ \mathds{1}_{A_t} \left\vert \Phi^{t-k} (\eta_k^N) (f_t) - \Phi^{t-k+1} (\eta_{k-1}^N) (f_t)  \right\vert^p \right]^{1/p}
				\leq \frac{\widetilde{B}}{1- \tau} \frac{\norm{f}}{\sqrt{N}}.
				\]
				
				Finally, we turn to the third term in \eqref{equ:error_decomp}. Note that 
				\[
				\E\left[ \mathds{1}_{A_t} \left\vert \Phi^t(\eta_0^N)(f_t) \right\vert^p  \right]^{1/p} 
				\leq 
				\E\left[ \left\vert \frac{\eta_0^N Q^t(f_t)}{\eta_0^N Q^t(\mathds{1})} - \frac{\eta_0^N Q^t(f_t)}{\mu_0 Q^t(\mathds{1})} \right\vert^p  \right]^{1/p} + \E\left[  \left \vert \frac{\eta_0^N Q^t(f_t)}{\mu_0 Q^t(\mathds{1})} \right\vert^p \right]^{1/p}.
				\]
				By Lemma \ref{lem:bound_semigroup}, and since $\eta_0^N$ is an iid approximation of $\mu_0$, we have
				\begin{align*}
					\E\left[ \left\vert \frac{\eta_0^N Q^t(f_t)}{\eta_0^N Q^t(\mathds{1})} - \frac{\eta_0^N Q^t(f_t)}{\mu_0 Q^t(\mathds{1})} \right\vert^p  \right]^{1/p} 
					\leq 
					C \tau^t \norm{f} \E\left[ \eta_0^NQ^t(\mathds{1}) \left\vert \frac{1}{\eta_0^N Q^t(\mathds{1})} - \frac{1}{\mu_0 Q^t(\mathds{1})} \right\vert^p  \right]^{1/p} \\
					\leq 
					C \tau^t \norm{f} \E\left[ \left\vert 1 - \frac{\eta_0^N Q^t(\mathds{1})}{\mu_0 Q^t(\mathds{1})} \right\vert^p  \right]^{1/p} 
					=
					\frac{C \tau^t \norm{f} }{\mu_0 Q^t(\mathds{1})} \E\left[ \left\vert {\eta_0^N Q^t(\mathds{1})} - {\mu_0 Q^t(\mathds{1})} \right\vert^p  \right]^{1/p}\\
					\leq 
					\frac{C' \tau^t \norm{f} }{\sqrt{N}} \frac{\norm{Q^t(\mathds{1})}}{{\mu_0 Q^t(\mathds{1})}} 
					= 
					\frac{C' \tau^t \norm{f} }{\sqrt{N}} \frac{\keff^{-t}\norm{Q^t(\mathds{1})}}{{\keff^{-t}\mu_0 Q^t(\mathds{1})}}.
				\end{align*}
				As in the proof of Lemma \ref{lem:stability_particle_approx_1} we can use the uniform convergence of $\omega_{t}$ to deduce that $\frac{\keff^{-t}\norm{Q^t(\mathds{1})}}{{\keff^{-t}\mu_0 Q^t(\mathds{1})}}$ is uniformly bounded in $t, N$.

				Finally, since $\mu_0 Q^t (f_t) =\Phi^t (\mu_0)(f) - \eta_t(f)  =0$ we have by Lemma \ref{lem:bound_semigroup} and similar arguments as above,  
				\begin{align*}
					\E\left[  \left \vert \frac{\eta_0^N Q^t(f_t)}{\mu_0 Q^t(\mathds{1})} \right\vert^p \right]^{\frac{1}{p}} 
					= 
					\frac{1}{\mu_0 Q^t(\mathds{1})}\E\left[  \left \vert {\eta_0^N Q^t(f_t)} - \mu_0 Q^t (f_t) \right\vert^p \right]^{\frac{1}{p}}  
					\leq \frac{C' \tau^t}{\sqrt{N}} \frac{\norm{Q^t(\mathds{1})}}{{\mu_0 Q^t(\mathds{1})}} \leq \frac{\widehat{C} \tau^t}{\sqrt{N}}.
				\end{align*} 
				Hence, the right hand side in \eqref{equ:error_decomp} is bounded by $\frac{\norm{f} B}{\sqrt{N}}$, for some constant $B< \infty$ independent of $t,N$ and $f$, which shows the claimed result. 
			\end{proof}

				\begin{remark}
					Theorem~\ref{thm:stability_particle_approx} can also be used to derive
					robust estimates for the fluence $\varphi$. The fluence is the positive eigenmeasure of the $\keff$ equation \eqref{equ:keff_problem}. That is, 
					\[
					\varphi \left(\left(  {\mathcal T} + \mathcal{S} + \frac{1}{\keff}{\mathcal F}\right) g\right) =0
					\]
					for any bounded and measurable $g \colon E \to \R$. The physical interpretation of $\varphi$ is that it quantifies the (relative) expected number of particles moving through different regions between fission events. The closely related measure $\eta(\d x) = \frac{1}{\keff} \varphi \mathcal{F}  (\d x)$ then describes where particles are born, and is exactly the $\eta$ given above. Furthermore, if for an arbitrary bounded and measurable $g \colon E \to \R$ we set $\tilde{g} = -(\mathcal{T} + \mathcal{S})g$, then 
					\[
					\eta( -(\mathcal{T} + \mathcal{S})^{-1}\tilde{g}) = \varphi (\tilde{g}).
					\]
					By standard results (see e.g.~the discussion in Section~2.4 of \cite{SNTE-III}) we have 
					\[
					-(\mathcal{T} + \mathcal{S})^{-1}\tilde{g} (x) = \E_{x} \left[ \int_0^{\tau_F} \tilde{g}((r(s),\upsilon(s))) \d s \right],
					\]
					where $(r(s),\upsilon(s))$ is the stochastic representation of the underlying particles with $(r(0),\upsilon(0)) = x$, and $\tau_F$ is the first time a particle either undergoes fission, or leaves the system. It is therefore possible to use the particle measures $\eta_{k}^N$ to compute estimates for the fluence $\varphi(g)$ and therefore $\varphi$ by incorporating $\tilde{g}$ as above. In particular, this is the process commonly referred to as \emph{tallying} in the nuclear engineering literature.
				\end{remark}

				\subsubsection{Arithmetic mean estimators}
				
				In this section we study $\kfrak_{T, k}^{ari}$.

				\begin{theorem}\label{thm:arithmetic_mean_error_bound}
					Let Assumptions \ref{ass_perron_frobenius_assumption}, \ref{ass:minimal_survival_probability} and \ref{assumption_E_p} (for some $p \in [1, \infty)$) be true.
					Then, there exists a constant $K_p \in (0, \infty)$ such that for any $T,k,N \in \N$ we have 
					\[
					\E\left[ \left\vert\kfrak_{T, k}^{ari} - \keff  \right\vert^p  \right]^{1/p} \leq K_p \left( \frac{\norm{G}}{\sqrt{N}} + \frac{\tau^{T}}{k(1-\tau)} + (T+k+1)^{1/p}e^{-(a N)/p}\right) , 
					\] 
					with $a = - \log(1-\kappa) \in (0, \infty)$, where $\kappa$ is as in Assumption \ref{ass:minimal_survival_probability}.
				\end{theorem}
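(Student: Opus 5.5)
We split the expectation according to whether the particle system has survived up to the last iteration used by the estimator. Set $m = T+k$ and $A_m = \{\tau(N) > m\}$. Then
\[
\E\left[ \left\vert\kfrak_{T, k}^{ari} - \keff \right\vert^p \right]^{1/p} \leq \E\left[ \mathds{1}_{A_m}\left\vert\kfrak_{T, k}^{ari} - \keff \right\vert^p \right]^{1/p} + \E\left[ \mathds{1}_{A_m^c}\left\vert\kfrak_{T, k}^{ari} - \keff \right\vert^p \right]^{1/p}.
\]

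\emph{Extinction term.} Since $0 \le \eta_t^N(G) \le \norm{G}$, the estimator $\kfrak_{T,k}^{ari}$ always lies in $[0,\norm{G}]$. By Proposition~\ref{prop:lambda-as-potential-integral}, also $\keff = \eta(G) \le \norm{G}$. Hence the integrand is bounded by $\norm{G}^p$. Proposition~\ref{prop:survial_probability_estimate} gives $\P[A_m^c] \le (T+k) e^{-aN}$. The second term is therefore at most $\norm{G}(T+k+1)^{1/p} e^{-aN/p}$, and $\norm{G}$ is absorbed into $K_p$.

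\emph{Surviving term.} On $A_m$ we have $A_m \subseteq A_t$ for every $t \le m$. We write
\[
\kfrak_{T,k}^{ari} - \keff = \frac1k \sum_{t=T+1}^{T+k} \bigl(\eta_t^N(G) - \eta_t(G)\bigr) + \frac1k \sum_{t=T+1}^{T+k}\bigl(\eta_t(G) - \keff\bigr).
\]

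For the first (stochastic) sum we apply Minkowski's inequality and then Theorem~\ref{thm:stability_particle_approx} with $f = G$, using $\mathds{1}_{A_m} \le \mathds{1}_{A_t}$. Each summand is then bounded by $K_p\norm{G}/\sqrt N$, so the average is bounded by the same quantity.

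For the second (deterministic bias) sum we use $\eta_t = \Phi^t(\mu_0)$ and $\keff = \eta(G)$. We need $|\Phi^t(\mu_0)(G) - \eta(G)| \le C'\norm{G}\tau^t$.
\begin{itemize}
\item For $\mu_0 = \delta_x$ this is exactly Proposition~\ref{prop:exponential_congergence_in_total_variation}.
\item For general $\mu_0$ with $\mu_0(E\setminus E_0)=1$, we mimic the proof of that proposition. Integrate Assumption~\ref{ass_perron_frobenius_assumption} against $\mu_0$ to get $|\keff^{-t}\mu_0Q^t(g) - \mu_0(\omega)\eta(g)| \le C\tau^t$. Then write the ratio $\mu_0Q^t(G)/\mu_0Q^t(\mathds 1)$ as in that proof, using $\mu_0(\omega) \ge \inf_{E\setminus E_0}\omega > 0$ to control the denominator for large $t$; small $t$ are handled by enlarging the constant.
\end{itemize}
Summing the geometric tail gives
\[
\frac1k\sum_{t=T+1}^{T+k} C'\tau^t \le \frac{C'\tau^{T}}{k(1-\tau)}.
\]

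Combining the stochastic sum, the bias sum and the extinction term yields the claimed bound, with $K_p$ the maximum of the constants involved.

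The main point needing care is the bias term: obtaining a uniform-in-$t$ constant for a general initial law $\mu_0$ rather than a Dirac mass. The positivity of $\inf\omega$ on $E\setminus E_0$ is what makes this work. We also check that the indicator reduction $\mathds{1}_{A_m}\le\mathds{1}_{A_t}$ is compatible with Theorem~\ref{thm:stability_particle_approx}; it is, since that theorem's bound holds for every $t$.
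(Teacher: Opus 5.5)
Your proof is correct. It uses the same three ingredients as the paper's proof: Theorem~\ref{thm:stability_particle_approx} with $f=G$, the geometric decay of $|\eta_t(G)-\keff|$, and Proposition~\ref{prop:survial_probability_estimate}. What differs is where the survival/extinction split is made.

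The paper splits separately for each $t$. It bounds $\E[|\eta_t^N(G)-\keff|^p]^{1/p}$ by $K_p'\norm{G}/\sqrt N+\widetilde C\tau^t+\keff\,(te^{-aN})^{1/p}$, using the convention that a system which has died out produces the estimate $0$. Only afterwards does it average over $t$ with Minkowski, which leaves $\frac1k\sum_{t=T+1}^{T+k}t^{1/p}$ to be controlled by an integral comparison.

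You split once on $\{\tau(N)>T+k\}$. You apply the stability theorem termwise via $\mathds{1}_{\{\tau(N)>T+k\}}\le \mathds{1}_{\{\tau(N)>t\}}$, and you dispose of the extinction event with the crude bound $|\kfrak_{T,k}^{ari}-\keff|\le\norm{G}$. This gives $(T+k)^{1/p}e^{-aN/p}$ directly and is a little cleaner. It does require the estimator to take values in $[0,\norm{G}]$ after extinction. Since the particles are undefined after $\tau(N)$, you should state the paper's convention (a dead system is assigned the value $0$) explicitly.

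For the bias term, the paper simply integrates the pointwise bound of Proposition~\ref{prop:exponential_congergence_in_total_variation}. This is legitimate because $\Phi^t(\mu_0)$ is the $Q^t(\mathds{1})$-weighted average of the measures $\Phi^t(\delta_x)$, $x\in E\setminus E_0$. You instead rerun the ratio argument for $\mu_0$ itself. Both routes are fine, and your handling of small $t$ by enlarging the constant makes explicit a step that the paper's proof of that proposition leaves implicit.
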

				
				\begin{proof}
					Using Proposition \ref{prop:exponential_congergence_in_total_variation}, and integrating over $E\setminus E_0$, we obtain that 
					\[
					\left\vert \eta_t(G) - \keff  \right\vert = \left\vert \Phi^t(\eta_0)(G) - \eta(G)  \right\vert \leq \widetilde{C}\tau^t,
					\]
					for any $t \in \N$. Hence, by Theorem \ref{thm:stability_particle_approx} it follows that for any $t \in \N$ we have 
					\[
					\E\left[\mathds{1}_{\{\tau(N)> t\}} \left\vert \eta_{t}^N(G) - \keff  \right\vert^p  \right]^{1/p} \leq \frac{K_p' \norm{G}}{\sqrt{N}} + \widetilde{C}\tau^t.
					\]
					Recall, that if the system has died out, we estimate $\keff$ with $0$. Thus, by the above estimates and Proposition \ref{prop:survial_probability_estimate} it follows
					\begin{align*}
						\E\left[\left\vert \eta_t^N(G) - \keff  \right\vert^p\right]^{1/p} 
						&\leq 
						\E\left[\mathds{1}_{\{\tau(N)> t\}} \left\vert \eta_{t}^N(G) - \keff  \right\vert^p  \right]^{1/p} + \keff \mathbb{P}[\tau(N) \leq t]^{1/p} \\
						& \leq
						\frac{K_p' \norm{G}}{\sqrt{N}} + \widetilde{C}\tau^t + \keff (t e^{-a N})^{1/p}.
					\end{align*}
					Using the triangle inequality we obtain that 
					\begin{align*}
						\E [\vert \kfrak_{T, k}^{ari} - \keff \vert^p] 
						&\leq 
						\frac{K_p' \norm{f}}{\sqrt{N}} + \frac{1}{k}\sum_{t=T+1}^{T+k}\widetilde{C}\tau^t + \frac{1}{k}\sum_{t=T+1}^{T+k}\keff (t e^{-a N})^{1/p} \\
						&\leq 
						\frac{K_p' \norm{f}}{\sqrt{N}} + \frac{\tau^T \widetilde{C}\tau^t}{k (1-\tau)} + \frac{\keff e^{-aN/p}}{k}\sum_{t=T+1}^{T+k} t^{1/p}.
					\end{align*}
					By the mean value theorem (in integral form) there exists some $x_0 \in [T,T+k+1]$ such that 
					\begin{align*}
						\frac{1}{k}\sum_{t=T+1}^{T+k} t^{1/p} 
						\leq 
						\frac{1}{k}\int_{T}^{T+k+1} x^{1/p} \d x = \frac{k+1}{k} x_0^{1/p} \leq 2 (T+k+1)^{1/p}.
					\end{align*}
					Combining the above inequalities shows the claimed result. 
				\end{proof}
				
				From the proof of Theorem \ref{thm:arithmetic_mean_error_bound} we immediately obtain the following. 
				\begin{corollary}
					Let Assumptions \ref{ass_perron_frobenius_assumption}, \ref{ass:minimal_survival_probability} and \ref{assumption_E_p} (for some $p \in [1, \infty)$) be true.
					Then, there exists a constant $K_p \in (0, \infty)$ such that for any $T,k,N \in \N$ we have 
					\[
					\E\left[\mathds{1}_{\{\tau(N)>t\}} \left\vert\kfrak_{T, k}^{ari} - \keff  \right\vert^p  \right]^{1/p} \leq K_p \left( \frac{\norm{G}}{\sqrt{N}} + \frac{\tau^{T}}{k(1-\tau)}\right) .
					\] 
				\end{corollary}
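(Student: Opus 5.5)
The event in the statement is written as $\{\tau(N)>t\}$. I read it as the survival event over the whole estimation window, $\{\tau(N)>T+k\}$. This is the natural choice, since the estimator uses the generations $t=T+1,\dots,T+k$. Since $\tau(N)>T+k$ implies $\tau(N)>t$ for every $t\le T+k$, the plan is to redo the proof of Theorem~\ref{thm:arithmetic_mean_error_bound} with the indicator kept inside the expectation throughout. Then the extinction term $\keff\,\mathbb{P}[\tau(N)\le t]^{1/p}$ never appears.

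First, set $A=\{\tau(N)>T+k\}$. Use the pointwise inequality
\[
\mathds{1}_{A}\left\vert \kfrak_{T,k}^{ari}-\keff\right\vert \le \frac1k\sum_{t=T+1}^{T+k}\mathds{1}_{\{\tau(N)>t\}}\left\vert \eta_t^N(G)-\keff\right\vert .
\]
It holds because $\mathds{1}_A\le \mathds{1}_{\{\tau(N)>t\}}$ for each $t$ in the window. Taking $L^p$ norms and applying Minkowski's inequality reduces the claim to bounding each summand.

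Second, for each $t$, split $\eta_t^N(G)-\keff=(\eta_t^N(G)-\eta_t(G))+(\eta_t(G)-\keff)$. Theorem~\ref{thm:stability_particle_approx} with $f=G$ bounds the first part by $K_p'\norm{G}/\sqrt N$. For the second part, as in the proof of Theorem~\ref{thm:arithmetic_mean_error_bound}, Proposition~\ref{prop:exponential_congergence_in_total_variation} (integrated against $\eta_0$, which sits on $E\setminus E_0$) together with Proposition~\ref{prop:lambda-as-potential-integral} gives $|\eta_t(G)-\keff|\le \widetilde C\norm{G}\tau^t$. This part is deterministic, so the indicator only makes it smaller.

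Third, average over $t$, using $\frac1k\sum_{t=T+1}^{T+k}\tau^t\le \tau^{T}/(k(1-\tau))$. This gives the bound $K_p'\norm{G}/\sqrt N+\widetilde C\norm{G}\tau^T/(k(1-\tau))$. The factor $\norm{G}$ is absorbed into $K_p$; this is allowed since $G$ is fixed, so $K_p$ may depend on it.

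There is no real obstacle. The only point needing care is the monotonicity of the survival events, which lets one common indicator dominate all summands. This is exactly why the exponential extinction term of Theorem~\ref{thm:arithmetic_mean_error_bound} disappears.
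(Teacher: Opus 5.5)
Your proposal is correct and follows essentially the paper's route. The paper obtains this corollary directly from the proof of Theorem~\ref{thm:arithmetic_mean_error_bound}: it uses the per-generation bound $\E[\mathds{1}_{\{\tau(N)>t\}}|\eta_t^N(G)-\keff|^p]^{1/p}\le K_p'\norm{G}/\sqrt{N}+\widetilde{C}\tau^t$, averages it via Minkowski, and never introduces the extinction term; your reading of the indicator as $\{\tau(N)>T+k\}$, together with the monotonicity $\mathds{1}_{\{\tau(N)>T+k\}}\le\mathds{1}_{\{\tau(N)>t\}}$ for $t\le T+k$, makes explicit the step the paper leaves implicit.
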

				
				Finally, we observe that in practice, a typical use case would be to
				run a Monte Carlo simulation to estimate $\keff$, which chooses the
				parameters $T, k, N$, which aims to trade off two sources of loss:
				first, we want to ensure that with high probability our simulation
				does not go extinct by the time $T+k$. Second, that our computation
				returns an estimate $\kfrak_{T,k}^{ari}$ which has small enough
				tolerance, conditional on the calculation terminating. We formulate
				this in the following result.
				
				\begin{theorem} \label{thm_conditional_convergence}
					Suppose Assumptions \ref{ass_perron_frobenius_assumption},
					\ref{ass:minimal_survival_probability} and \ref{assumption_E_p} (for
					some $p \in [1, \infty)$) are true. Then, there exists some constant $\tilde{K}_p \in (0, \infty)$ such
					that for any $T, k \in \N$ there exists $N_0 \in \N$ such that for all
					$N \ge N_0$ we have
					\[
					\E\left[\left\vert\kfrak_{T, k}^{ari} - \keff  \right\vert^p \big|\tau(N)>T+k  \right]^{1/p} \leq \tilde{K}_p \left( \frac{\norm{G}}{\sqrt{N}} + \frac{\tau^{T}}{k(1-\tau)}\right),
					\]
					and
					\[
					\P\left(\tau(N)<T+k  \right) \le (T+k) \me^{-aN},
					\]
					with $a = - \log(1-\kappa) \in (0, \infty)$, where $\kappa$ is as in Assumption \ref{ass:minimal_survival_probability}.
					
					Moreover, given $\eps, \delta > 0$, we can find $T,k,N$ sufficiently
					large that
					\[
					\E\left[\left\vert\kfrak_{T, k}^{ari} - \keff  \right\vert^p
					\big|\tau(N)>T+k  \right]^{1/p} < \eps
					\text{ and }
					\P\left(\tau(N)<T+k  \right) \le \delta.
					\]
				\end{theorem}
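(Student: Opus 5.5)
The plan is to obtain the conditional bound by dividing the unconditional-on-survival estimate (the corollary following Theorem~\ref{thm:arithmetic_mean_error_bound}) by the survival probability. The extinction bound follows directly from Proposition~\ref{prop:survial_probability_estimate}. Everything else comes from choosing $N_0$ so that this survival probability is bounded below.

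\emph{Step 1 (restricted $L^p$ bound).} First I would establish
\[
\E\left[\mathds{1}_{\{\tau(N)>T+k\}}\left\vert\kfrak_{T,k}^{ari}-\keff\right\vert^p\right]^{1/p}\le K_p\left(\frac{\norm{G}}{\sqrt N}+\frac{\tau^T}{k(1-\tau)}\right).
\]
Since $\{\tau(N)>T+k\}\subseteq\{\tau(N)>t\}$ for every $T+1\le t\le T+k$, I would apply Minkowski's inequality to the average defining $\kfrak^{ari}_{T,k}$. Each summand is then bounded by $\E[\mathds{1}_{\{\tau(N)>t\}}\vert\eta_t^N(G)-\keff\vert^p]^{1/p}$. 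By Theorem~\ref{thm:stability_particle_approx} with $f=G$, together with $\vert\eta_t(G)-\keff\vert\le\widetilde C\tau^t$ from Proposition~\ref{prop:exponential_congergence_in_total_variation}, each such term is at most $K_p'\norm{G}/\sqrt N+\widetilde C\tau^t$. Averaging over $t$ and summing the geometric series yields the displayed bound. No extinction term appears, because the indicator removes the degenerate event.

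\emph{Step 2 (conditioning).} Next, $\E[\,\cdot\,\vert\,\tau(N)>T+k]^{1/p}$ equals the restricted moment from Step 1 divided by $\P(\tau(N)>T+k)^{1/p}$. Proposition~\ref{prop:survial_probability_estimate} gives $\P(\tau(N)<T+k)\le\P(\tau(N)\le T+k)\le(T+k)e^{-aN}$, which is the second claim. I would choose $N_0$ so that $(T+k)e^{-aN_0}\le 1/2$. Then for $N\ge N_0$ the survival probability is at least $1/2$, and the first claim holds with $\tilde K_p=2^{1/p}K_p$, which does not depend on $T$ or $k$. Only $N_0$ depends on $(T,k)$, which is exactly what the statement permits.

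\emph{Step 3 (choice of parameters).} Given $\eps,\delta>0$, I would first fix $T$, say with $k=1$, so that $\tilde K_p\tau^T/(k(1-\tau))<\eps/2$; this is possible because $\tau<1$. I would then take $N\ge N_0(T,k)$ large enough that $\tilde K_p\norm{G}/\sqrt N<\eps/2$ and $(T+k)e^{-aN}\le\delta$. This is possible since all three requirements are monotone in $N$.

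The only real subtlety is Step 1. The earlier proof of Theorem~\ref{thm:arithmetic_mean_error_bound} works with the indicator at each individual time $t$, and here one must note that the single indicator at $T+k$ is dominated by each of these. It is also important that the constants from Theorem~\ref{thm:stability_particle_approx} are uniform in $t$, so that averaging does not introduce any dependence on $k$ or $T$.
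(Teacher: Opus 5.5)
Your proposal is correct and follows the paper's proof. You use the survival-restricted $L^p$ bound (the corollary after Theorem~\ref{thm:arithmetic_mean_error_bound}, which you re-derive via Minkowski and the inclusion $\{\tau(N)>T+k\}\subseteq\{\tau(N)>t\}$), choose $N_0$ so that the extinction probability is at most $1/2$, take $\tilde K_p=2^{1/p}K_p$, and then let the burn-in (or $k$) and $N$ grow. You are also slightly more careful than the paper in bounding $\P(\tau(N)\le T+k)$ rather than $\P(\tau(N)<T+k)$, which is what dividing by $\P(\tau(N)>T+k)$ actually requires.
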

				
				\begin{proof}
					By Theorem~\ref{thm:arithmetic_mean_error_bound}, for fixed $T, k$,
					it is clear that we can choose $N_0$ sufficiently large that
					$\P\left(\tau(N)<T+k \right) \le \min\{\delta,\frac{1}{2}\}$. Taking
					$\tilde{K}_p = K_p 2^{1/p}$, we have the first claim. Moreover,
					given $\eps > 0$, it is now immediate that by fixing $T$ and taking
					$k$ and $N$ sufficiently large, the rest of the statement follows.
				\end{proof}
				
				\begin{remark}
					We expect that under suitable mixing assumptions on the particles $(\xi_1^t, \dots , \xi_N^t)_{ t \in \N_0}$ it is possible to obtain $(kN)^{-1/2}$, rather than $N^{-1/2}$ in the results above. This improved bound would also take into account that we use a arithmetic mean, however, we leave a further investigation open for future research.  
				\end{remark}
				
				\begin{remark} \label{rk-cost-calculations}
					It is natural to ask: for a given error tolerance $\eps >0$ and
					failure probability $\delta$, what is the minimal computational
					requirement?
					
					We suppose that the cost of the calculation scales with the number
					of particles per generation, and the number of generations
					simulated, so that the total numerical cost is proportional to
					$N(T+k)$. We can formulate this as an optimisation problem::
					\begin{align*}
						\text{minimise } N(T+k) \qquad \text{ subject to: } &  \left(
						\frac{\norm{G}}{\sqrt{N}} + \frac{\tau^{T}}{k(1-\tau)}\right)
						\le \eps\\
						& (T+k) \me^{-aN} \le \delta,\\
						& T, k, N \ge 0.
					\end{align*}
					Here we relax the practical assumption that $T,k,N$ are integers for
					formal analysis, and note that in addition we have some constants,
					independent of $N, T, k, \norm{G}, \tau, a$ which we absorb into
					$\eps, \delta$.  To simplify notation, we write
					$\tau = \me^{-\beta}$, where $\beta > 0$ and $\gamma:= \norm{G}^2$.
					
					In this regime, we normally expect the pertinent constraint to come
					from the first term, and in this case, ignoring the second
					constraint, we compute that the optimum occurs when:
					\begin{align*}
						k^* & = \beta^{-1}; \qquad 		&T^*  = -\frac{1}{\beta} \left( W_{-1}\left( -\frac{\me \gamma
							(1-\me^{-\beta})}{2 \beta \sqrt{N^*}}\right) \right);\\
						N^* &  = \eps^{-2} \gamma^2 (1-w^*)^{-2};
						&w^*  = (3 + 2\beta T^*)^{-1} \in \left(0,\frac{1}{3}\right];
					\end{align*}
					where $W_{-1}(x)$ for $x \in [-\me^{-1},0)$ is the negative lower
					branch of the Lambert-W function\footnote{Specifically the unique solution
						$y$ to the equation $y \me^y = x$ with $y \le -1$.}. We leave the
					details of the calculation to the interested reader. 
					
					We observe that, up to a constant, (ignoring the
					multiplicative constants independent of $\gamma, \beta$ which we have
					suppressed in writing the above inequalities), we have:
					\begin{align*}
						\gamma^2 \eps^{-2} & \le N^* \le \frac{9}{4} \gamma^2 \eps^{-2}
					\end{align*}
					and we have total cost:
					\begin{align*}
						(T^* + k^*)N^*&  = \gamma^2 \eps^{-2} (1-w^*)^{-2} \beta^{-1} \left(
						\ln(\eps^{-1}) + o(\ln(\eps^{-1}))\right).
					\end{align*}
					In particular, for a given problem, the error behaves like
					$\eps^{-2} \ln (\eps^{-1})$ for $\eps$ small. In addition, for
					problems where the spectral gap is small, so $\beta>0$ is small, then
					the cost grows like $\beta^{-1}$. Moreover we note that the optimal number of steps to average over
					($k^*$) is determined by the spectral gap: when $\beta$ is smaller,
					the optimal number of generations to average over increases.
				\end{remark}

				\subsubsection{Geometric mean estimators}
				In this section we study $\kfrak_{T,k}^{geo}$.
				
				\begin{lemma}\label{lem:Assumption_B_q_true}
					Suppose Assumptions \ref{ass_perron_frobenius_assumption}, \ref{ass:minimal_survival_probability} be true. 
					Then, for any $q \in [1, \infty)$ there exists a constant $C_q$ such that for any $t, N \in \N$ we have
					\[
					\E\left[ \mathds{1}_{\{\tau(N) >t\}} \left( \frac{1}{\eta_{t}^N(G)} \right)^q \right] \leq C_q.
					\]
				\end{lemma}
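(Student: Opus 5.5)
On the event $\{\tau(N)>t\}$ at least one of $\xi^t_1,\dots,\xi^t_N$ lies in $E\setminus E_0$. I will exploit the lower bound $G\geq \kappa$ on $E\setminus E_0$ from Assumption~\ref{ass:minimal_survival_probability}. Writing $S_t=\#\{j : \xi_j^t\in E\setminus E_0\}$, we get
\[
\eta_t^N(G)\geq \frac{\kappa S_t}{N}, \qquad\text{so}\qquad \mathds{1}_{\{\tau(N)>t\}}\Big(\frac{1}{\eta_t^N(G)}\Big)^q \leq \kappa^{-q}\,\mathds{1}_{\{S_t\geq 1\}}\Big(\frac{N}{S_t}\Big)^q .
\]
The task therefore reduces to showing that $\E[\mathds{1}_{\{S_t\ge1\}}(N/S_t)^q]$ is bounded uniformly in $t$ and $N$. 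In other words, the fraction of surviving particles should not be small.

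\textbf{Conditional binomial domination.} Condition on $\mathcal{F}_{t-1}$ on the event $\{\tau(N)>t-1\}$. For $t\geq 2$, resampling only selects particles with $G>0$, so every $\widetilde\xi_j^{t}$ lies in $E\setminus E_0$. For $t=1$ the same holds because $\mu_0(E\setminus E_0)=1$. The particles then move independently with $M$, and each lands in $E\setminus E_0$ with probability at least $\kappa$. Hence, conditionally, $S_t$ stochastically dominates a $\mathrm{Bin}(N,\kappa)$ variable $B$. Since $s\mapsto \mathds{1}_{\{s\geq 1\}}(N/s)^q$ is not monotone (it jumps at $0$), I will instead work with the nonincreasing function $h(s)=(N/\max(s,1))^q$. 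This gives
\[
\E\big[\mathds{1}_{\{S_t\ge1\}}(N/S_t)^q\big]\leq \E[h(S_t)]\leq \E[h(B)].
\]
On the event $\{\tau(N)\le t-1\}$ the indicator vanishes, so nothing needs to be checked there.

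\textbf{Binomial estimate.} Split according to whether $B\geq \kappa N/2$. On $\{B\ge \kappa N/2\}$ we have $h(B)\leq (2/\kappa)^q$. On the complement, $h(B)\leq N^q$, and Hoeffding's (or Chernoff's) inequality gives $\P[B<\kappa N/2]\leq e^{-\kappa^2 N/2}$. Therefore
\[
\E[h(B)]\le (2/\kappa)^q+\sup_{N} N^q e^{-\kappa^2N/2}<\infty,
\]
and we may take $C_q=\kappa^{-q}\big((2/\kappa)^q+\sup_N N^qe^{-\kappa^2N/2}\big)$, which is independent of $t$ and $N$.

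The main obstacle is the domination step: one must verify carefully that all resampled particles lie in $E\setminus E_0$, and handle the non-monotonicity caused by the indicator, which the replacement by $h$ resolves. The rest is routine. Assumption~\ref{ass_perron_frobenius_assumption} is not actually needed for this argument.
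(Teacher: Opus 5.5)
Your proof is correct and follows essentially the same route as the paper: bound $\eta_t^N(G)\ge \kappa S_t/N$, compare the number of surviving particles with a $\mathrm{Bin}(N,\kappa)$ variable, and finish with a binomial inverse-moment bound. When you condition on $\mathcal{F}_{t-1}$, keep the $\mathcal{F}_{t-1}$-measurable factor $\mathds{1}_{\{\tau(N)>t-1\}}$ explicitly in the display, as your final remark intends.

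At the comparison step your version is actually the more careful one. The paper asserts $\P[B_\ell]\le\P[Y=\ell]$ for each $\ell$, which does not follow from stochastic domination; it fails, for example, when $M(x,E\setminus E_0)=1$ for all $x$. Your passage to the nonincreasing $h(s)=(N/\max(s,1))^q$, together with the explicit Hoeffding estimate, is the justified form of the paper's appeal to ``standard estimates for binomially distributed random variables''.
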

				\begin{proof}
					Fix $t \in \N$ and $N \in \N$. 
					Define the sets $B_{\ell} = B_{\ell, t, N}$ as the events that exactly $\ell \in \{1, \dots, N\}$ particles survive from iteration $t$ to $t+1$ in the particle algorithm. 
					Then we can rewrite $\{\tau(N)>t\}$ as the disjoint union
					\[
					\{\tau(n)> t\} = \bigcup_{\ell=1}^N B_{\ell}.
					\]
					Since $G $ is uniformly lower bounded by $\kappa$ on $E\setminus E_0$ and `survivors' have to be in this region while particles that die are in $E_0$, it follows $\eta_{t}^{N}(G) \geq \frac{\ell}{N}$ on the event $B_\ell$. Thus
					\begin{align*}
						\E\left[ \mathds{1}_{\{\tau(N) >t\}} \left( \frac{1}{\eta_{t}^N(G)} \right)^q \right] \\
						\leq 
						\sum_{\ell=1}^{N}\E\left[ \mathds{1}_{B_\ell} \left( \frac{N}{\kappa \ell} \right)^q \right] 
						= 
						\frac{N^q}{\kappa^q }\sum_{\ell=1}^{N} \frac{1}{\ell^q}\P \left[ {B_\ell}  \right]  .
					\end{align*}
					Note that particles survive with a minimal probability of $\kappa>0$, and each particle does so independent of all other particles. 
					Hence, 
					\[
					\mathbb{P}[B_\ell] \leq \mathbb{P}[Y = \ell],
					\]
					where $Y \sim \text{Binomial}(N, \kappa)$. 
					Define the function $h_q \colon \{0, \dots, N\} \to [0, \infty]$ via $h_q(n) = 	0$ if $n=0$ and $
					h(n)=	\frac{1}{n^q}$ else.
					Then we have 
					\[
					\sum_{\ell=1}^{N} \frac{1}{\ell^q}\P \left[ {B_\ell}  \right] 
					\leq 
					\E[h_q(Y)] \leq \frac{C_q}{N^q},
					\]
					by using standard estimates for binomially distributed random variables. 
				\end{proof}
				
				\begin{lemma}\label{lem:fk-normalised-lower-bound}
					Under Assumption \ref{ass_perron_frobenius_assumption} and \ref{ass:minimal_survival_probability} we have $L := \inf_{n \in \N} \normalisedIntegral{n}{G} > 0$.
				\end{lemma}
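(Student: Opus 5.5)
The plan is to prove the lemma by combining two facts: a strictly positive bound for each fixed $n$, and convergence $\normalisedIntegral{n}{G} \to \keff > 0$ from Proposition~\ref{prop:lambda-as-feynman-kac-limit}. If each term is positive and the sequence converges to a positive limit, the infimum is automatically positive.

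\emph{Positivity of each term.} Recall that $\eta_0$ satisfies $\eta_0(E\setminus E_0)=1$, so $\eta_0(G) \geq \kappa > 0$ by Assumption~\ref{ass:minimal_survival_probability}. For $n \geq 1$ we have $\eta_n = \Phi(\eta_{n-1})$ with $\eta_{n-1}\in\unkilledprobmeas{E}$, and
\[
\normalisedIntegral{n}{G} \geq \kappa\, \eta_n(E\setminus E_0) = \kappa\,\frac{\eta_{n-1}\bigl(G\, M(\cdot, E\setminus E_0)\bigr)}{\eta_{n-1}(G)}.
\]
On $E\setminus E_0$, Assumption~\ref{ass:minimal_survival_probability} gives $M(\cdot,E\setminus E_0)\geq\kappa$, and $G$ vanishes on $E_0$. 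Hence the ratio is at least $\kappa$, and so $\normalisedIntegral{n}{G}\geq\kappa^2$ for every $n$.

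This argument already yields $L \geq \min\{\kappa,\kappa^2\} = \kappa^2 > 0$ directly, without any limiting argument. The only subtlety is to use that $G=0$ on $E_0$, so that the numerator integrand $G(x)M(x,E\setminus E_0)$ is at least $\kappa G(x)$ everywhere.

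\emph{Convergence, as a cross-check.} I would also note that Proposition~\ref{prop:exponential_congergence_in_total_variation}, integrated against $\eta_0$ (as in the proof of Theorem~\ref{thm:arithmetic_mean_error_bound}), gives $|\eta_n(G)-\keff|\leq \widetilde{C}\norm{G}\tau^n$. This provides an alternative route: the terms are eventually at least $\keff/2$, and finitely many remaining positive terms are handled by the pointwise argument above.

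\emph{Main obstacle.} The main point to check is the case $\eta_0\neq\delta_x$. In that case, $\eta_n(G)$ is a ratio of integrals $\eta_0 Q^n(G)/\eta_0 Q^n(\mathds{1})$ rather than a pointwise quantity, so the convergence route requires care. The direct $\kappa^2$ bound avoids this issue entirely, so I would present it as the main proof.
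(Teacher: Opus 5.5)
Your proof is correct, and it takes a genuinely different route from the paper's.

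The paper's argument is purely asymptotic. Assumption~\ref{ass_perron_frobenius_assumption} (via Proposition~\ref{prop:exponential_congergence_in_total_variation}) gives $\eta_n(G)\to\keff>0$. The paper then uses the fact that a sequence of positive reals with a positive limit has a positive infimum. The positivity of each individual term is used tacitly there; it follows from Assumption~\ref{assumption:absorption}.

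You instead prove the one-step bound $\eta_n(E\setminus E_0)=\eta_{n-1}\bigl(G\,M(\cdot,E\setminus E_0)\bigr)/\eta_{n-1}(G)\ge\kappa$, using that $G$ vanishes on $E_0$. Combined with $G\ge\kappa$ on $E\setminus E_0$, this gives $\eta_n(G)\ge\kappa^2$ for every $n$. The same induction shows $\eta_{n-1}(G)>0$, so each ratio is well defined.

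Your route buys three things:
\begin{itemize}
\item It needs only Assumption~\ref{ass:minimal_survival_probability} and the factorisation $Q=GM$, with no spectral convergence at all.
\item It makes the per-term positivity explicit.
\item It gives the explicit constant $L\ge\kappa^2$, uniform over all initial laws concentrated on $E\setminus E_0$. This makes the factor $1/L$ in the proof of Theorem~\ref{thm:geometric_mean_error_bound} explicit too.
\end{itemize}
The paper's route gives no quantitative control of the finitely many early terms. Its extra information, that the terms are eventually near $\keff$, is not needed for this lemma.

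The issue you flag for $\eta_0\neq\delta_x$ is not a real obstacle for the convergence route. Take the pointwise bound $\bigl|Q^n[G](x)-\keff\,Q^n[\mathds{1}](x)\bigr|\le\widetilde C\norm{G}\tau^n Q^n[\mathds{1}](x)$ from Proposition~\ref{prop:exponential_congergence_in_total_variation}. Integrating it against $\eta_0$ and dividing by $\eta_0Q^n(\mathds{1})$ gives $|\eta_n(G)-\keff|\le\widetilde C\norm{G}\tau^n$ directly.
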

				\begin{proof}
					By Assumption \ref{ass_perron_frobenius_assumption}, $\lim_{n \rightarrow \infty} \normalisedIntegral{n}{G} = \lim_{n \rightarrow \infty} \Phi^n \left( \eta_n \right) (G) = \keff >0$. Hence, the result follows by well known properties of converging sequences of positive real numbers. 
				\end{proof}
				
				\begin{theorem}\label{thm:geometric_mean_error_bound}
					Let Assumptions \ref{ass_perron_frobenius_assumption}, \ref{ass:minimal_survival_probability} and \ref{assumption_E_p} (for some $p' \in (1, \infty)$) be true.
					Then, for any $p \in [1, p')$, there exists some constant $K_{p, p'} \in (0, \infty)$ such that for any $T,k,N \in \N$ we have 
					\[
					\E\left[ \left\vert\kfrak_{T, k}^{geo} - \keff  \right\vert^p  \right]^{1/p} \leq K_{p,p'} \left( \frac{\norm{G}^2}{\sqrt{N}} + \frac{\tau^T }{k} +  (T+k+1)^{1/p}e^{-(a N)/p}\right) , 
					\] 
					with $a = - \log(1-\kappa) \in (0, \infty)$, where $\kappa$ is as in Assumption \ref{ass:minimal_survival_probability}.
				\end{theorem}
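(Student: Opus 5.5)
We split $\Omega$ into the survival event $A := \{\tau(N) > T+k\}$ and its complement. On $A^c$ the estimator is $0$ (the convention that a died-out system estimates $\keff$ by $0$), so that part contributes at most $\keff\,\P[\tau(N)\le T+k]^{1/p} \le \keff (T+k)^{1/p} e^{-aN/p}$ by Proposition~\ref{prop:survial_probability_estimate}. This produces the third term.

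On $A$ all factors $\eta_t^N(G)$, $t \in \{T+1,\dots,T+k\}$, are positive, and we linearise the geometric mean. We would use the elementary bound: for positive $x_1,\dots,x_k$ and $y>0$,
\[
\Bigl|\Bigl(\prod_{t} x_t\Bigr)^{1/k} - y\Bigr| \le \frac{1}{k}\sum_{t}\frac{\max\{x_t,y\}}{\min\{x_t,y\}}\,|x_t-y| .
\]
This follows from $|e^u - e^v| \le \max(e^u,e^v)|u-v|$ and $|\log x - \log y| \le |x-y|/\min(x,y)$, together with the bound $(\prod x_t)^{1/k} \le \max_t x_t \le \|G\|$. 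We apply it with $x_t = \eta_t^N(G)$ and $y = \keff$. Then
\[
|\kfrak^{geo}_{T,k}-\keff| \le \frac{1}{k}\sum_t \max\{\|G\|,\keff\}\Bigl(\frac{1}{\eta_t^N(G)}+\frac{1}{\keff}\Bigr)|\eta_t^N(G)-\keff| .
\]
The extra factor $\|G\|$ is the source of the $\|G\|^2$ in the statement.

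Next we take $L^p$ norms on $A$ and use Minkowski over $t$. For each $t$ we apply Hölder with exponents $p'/p$ and $r$, where $1/p = 1/p' + 1/r$:
\[
\E\Bigl[\mathds{1}_{\{\tau(N)>t\}}\Bigl(\frac{1}{\eta_t^N(G)}\Bigr)^p|\eta_t^N(G)-\keff|^p\Bigr]^{1/p} \le \E\Bigl[\mathds{1}_{\{\tau(N)>t\}}\eta_t^N(G)^{-r}\Bigr]^{1/r}\,\E\bigl[\mathds{1}_{\{\tau(N)>t\}}|\eta_t^N(G)-\keff|^{p'}\bigr]^{1/p'}.
\]
Lemma~\ref{lem:Assumption_B_q_true} with $q=r$ bounds the first factor by a constant. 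For the second factor we proceed as in the proof of Theorem~\ref{thm:arithmetic_mean_error_bound}: Theorem~\ref{thm:stability_particle_approx} with exponent $p'$ gives $K_{p'}\|G\|/\sqrt{N}$, and Proposition~\ref{prop:exponential_congergence_in_total_variation} bounds the bias by $|\eta_t(G)-\keff| \le \widetilde C \tau^t$. The $1/\keff$ term is handled directly, with no Hölder step needed.

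Summing over $t$ gives $\frac1k\sum_{t=T+1}^{T+k}\tau^t \le \tau^T/(k(1-\tau))$, and $1-\tau$ is absorbed into $K_{p,p'}$. The main obstacle is controlling the negative moments of $\eta_t^N(G)$, which is why the strict inequality $p<p'$ is needed for the Hölder split. Lemma~\ref{lem:Assumption_B_q_true} supplies exactly this control, using $G \ge \kappa$ off $E_0$; Lemma~\ref{lem:fk-normalised-lower-bound} could be used alternatively if one linearises around $\eta_t(G)$ instead of $\keff$. Combining the three contributions gives the claim.
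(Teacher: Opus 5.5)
Your proof is correct. It differs from the paper's in how the geometric mean is linearised.

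\textbf{The paper's route.} The paper centres at the deterministic geometric mean $\gamma_{T,k}=(\prod_{\ell}\eta_\ell(G))^{1/k}$ and applies AM--GM one-sidedly. This forces a case split on $\{\kfrak^{geo}_{T,k}\ge\gamma_{T,k}\}$:
\begin{itemize}
\item In one case the denominators are the deterministic $\eta_\ell(G)$, controlled by Lemma~\ref{lem:fk-normalised-lower-bound}.
\item In the other case they are the random $\eta^N_\ell(G)$, controlled by H\"older and Lemma~\ref{lem:Assumption_B_q_true}.
\end{itemize}
The bias $|\gamma_{T,k}-\keff|$ is then estimated separately and deterministically.

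\textbf{Your route.} You centre directly at $\keff$ and use a two-sided log/exp mean-value bound. This removes the case split and the intermediate quantity $\gamma_{T,k}$. It also removes the need for $L=\inf_n\eta_n(G)>0$, since the deterministic denominator is now $\keff$ itself. The price is that the bias $\widetilde C\tau^t$ passes through the H\"older step, so it is multiplied by the negative-moment constant and by $\norm{G}$. This is harmless, because all constants here are model-dependent anyway; the paper's route simply keeps the bias term free of random factors. Both routes yield the same rate.

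\textbf{A small imprecision.} Your stated justification, via $|e^u-e^v|\le\max(e^u,e^v)|u-v|$, produces a common factor $\max\{(\prod_s x_s)^{1/k},y\}$. It does not produce the per-term factor $\max\{x_t,y\}$ in your displayed inequality. The per-term version is in fact true: use AM--GM when the geometric mean exceeds $y$, and $1-e^{-w}\le w$ otherwise. In any case, you only use the weaker bound with $\max\{\norm{G},\keff\}=\norm{G}$, and your derivation does give that.
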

				
				\begin{proof}
					Define $\gamma_{T,k} = \left( \prod_{\ell=T+1}^{T+k} \eta_\ell(G)\right)^{1/k} $ and $A = \left\{ \kfrak_{T,k}^{geo} \geq \gamma_{T,k} \right\} \in \mathcal{F}$. On the event $\left\{ {\tau}({N}) > T+k \right\} \cap A$, we have
					\begin{align*}
						\left| {\kfrak_{T,k}^{geo}} - \gamma_{T,k} \right| &= \gamma_{T,k} \left( \frac{ {\kfrak_{T,k}^{geo}} }{ \gamma_{T,k} } - 1 \right)
						\leq 
						\norm{G} \left[ \left( \prod_{\ell=T+1}^{T+k} \frac{ \eta^{N}_{\ell}({G}) }{ \normalisedIntegral{\ell}{G} }\right)^\frac1{k} - 1 \right] \\[12pt]
						&\leq 
						\frac{ \norm{G} }{k} \sum_{\ell=T+1}^{T+k} \left( \frac{ \eta^{N}_{\ell}({G}) }{ \normalisedIntegral{\ell}{G} } - 1 \right) 
						= 
						\frac{ \norm{G} }{k} \sum_{\ell=T+1}^{T+k} \frac1{\normalisedIntegral{\ell}{G}} \left( \eta^{N}_{\ell}({G}) - \normalisedIntegral{\ell}{G} \right) \\[12pt]
						&\leq 
						\frac{ \norm{G} }{kL} \sum_{\ell=T+1}^{T+k} \left| \eta^{N}_{\ell}({G}) - \normalisedIntegral{\ell}{G} \right|,
					\end{align*}
					where third line follows from Lemma \ref{lem:fk-normalised-lower-bound}. 
					
					Similarly, since $\left\{ {\tau}({N}) > T+k \right\} \subseteq \left\{ \particleintegral{N}{\ell}{G} > 0 \right\}$ for $\ell \in \left\{ 0, \ldots, T+k \right\}$, the following holds on the event $\left\{ {\tau}({N}) \geq T+k \right\} \cap A^c$:
					\begin{align*}
						\left| \kfrak_{T,k}^{geo} - \gamma_{T,k} \right| 
						&= 
						\kfrak_{T,k}^{geo} \left( \frac{ \gamma_{T,k} }{ \kfrak_{T,k}^{geo}} -1 \right) 
						= 
						\kfrak_{T,k}^{geo} \left[ \left( \prod_{\ell=T+1}^{T+k} \frac{ \normalisedIntegral{\ell}{G} }{ \eta^{N}_{\ell}({G}) } \right)^\frac1{k} - 1 \right] \\[12pt]
						&\leq 
						\frac{ \norm{G} }{k} \sum_{\ell=T+1}^{T+k} \left( \frac{ \normalisedIntegral{\ell}{G} }{ \eta^{N}_{\ell}({G}) } - 1 \right) 
						\leq 
						\frac{ \norm{G} }{k} \sum_{\ell=T+1}^{T+k} \frac1{ \particleintegral{N}{\ell}{G}} \left| \eta^{N}_{\ell}({G}) - \normalisedIntegral{\ell}{G} \right|.
					\end{align*}
					Using these estimates, and abbreviating $\mathcal{C}= \{\tau({N}) > T+k\}$, we obtain  
					\begin{align*} 
						\expectationOperator{ \mathds{1}_{\mathcal{C}} \left| {\kfrak_{T,k}^{geo}} - \gamma_{T,k} \right|^p }^\frac1{p}  
						\leq \expectationOperator{ \mathds{1}_{\mathcal{C}} \left| \frac{ \norm{G} }{kL} \sum_{\ell=T+1}^{T+k} \left| \particleintegral{N}{\ell}{G} - \normalisedIntegral{\ell}{G} \right| \right|^p }^\frac1{p} \\
						\qquad+ \expectationOperator{ \mathds{1}_{\mathcal{C}} \left| \frac{ \norm{G} }{ k } \sum_{\ell=T+1}^{T+k} \frac1{ \particleintegral{N}{\ell}{G} } \left| \particleintegral{N}{\ell}{G} - \normalisedIntegral{\ell}{G} \right| \right|^p }^\frac1{p}.
					\end{align*}
					As a consequence of Jensen's inequality (for sums), Minkowski's inequality and Theorem \ref{thm:stability_particle_approx} (note that if Assumption \ref{assumption_E_p} is true for $p'$, then also for $p< p'$) it follows 
					\[
					\expectationOperator{ \mathds{1}_{\{\tau({N}) > T+k\}} \left| \frac{ \norm{G} }{kL} \sum_{\ell=T+1}^{T+k} \left| \particleintegral{N}{\ell}{G} - \normalisedIntegral{\ell}{G} \right| \right|^p }^\frac1{p}
					\leq 
					\frac{K_p\norm{G}^2}{kL \sqrt{N}}.
					\]
					Next, let $q =p'/p >1$ and denote by $q'$ the H\"{o}lder conjugate of $q$. Again using Jensen and Minkowski inequalities, followed by H\"{o}lder's inequality and Theorem \ref{thm:stability_particle_approx} together with Lemma \ref{lem:Assumption_B_q_true} and $\mathcal{C} = \{\tau({N}) > T+k\}$, it follows that 
					\begin{align*}
						\expectationOperator{ \mathds{1}_{\mathcal{C}} \left| \frac{ \norm{G} }{ k } \sum_{\ell=T+1}^{T+k} \frac1{ \particleintegral{N}{\ell}{G} } \left| \particleintegral{N}{\ell}{G} - \normalisedIntegral{\ell}{G} \right| \right|^p }^\frac1{p} \\
						\leq 
						\frac{ \norm{G} }{ k } \sum_{\ell=T+1}^{T+k}\expectationOperator{ \mathds{1}_{\mathcal{C}} \left( \frac1{ \particleintegral{N}{\ell}{G} } \left| \particleintegral{N}{\ell}{G} - \normalisedIntegral{\ell}{G} \right| \right)^p }^\frac1{p} 
						\leq 
						\frac{\norm{G}^2C_{p, p'}}{\sqrt{N}},
					\end{align*}
					with a constant $C_{p, p'}$ only depending on $p, p'$. 
					Hence, 
					\begin{equation}\label{equ:geometric_estimator_bound_survival}
						\expectationOperator{ \mathds{1}_{\tau({N}) > T+k} \left| {\kfrak_{T,k}^{geo}} - \gamma_{T,k} \right|^p }^\frac1{p}
						\leq 
						\frac{\norm{G}^2 \widetilde{C}_{p,p'}}{L\sqrt{N}},
					\end{equation}
					where $\widetilde{C}_{p, p'}$ only depends on $p, p'$. 
					
					Next we estimate $\vert \gamma_{T,k} - \keff\vert$. If $\gamma_{T,k} \geq \keff$ then 
					\begin{align*}
						\vert \gamma_{T,k} - \keff\vert = \keff \left( \prod_{\ell=T+1}^{T+k}\left( \frac{\eta_\ell(G)}{\keff}\right)^{1/k}  - 1\right)
						\leq 
						\frac{\keff}{k} \left( \sum_{\ell=T+1}^{T+k} \frac{\eta_\ell(G)}{\keff}  - 1\right)\\
						\leq \frac{1}{k}\sum_{\ell=T+1}^{T+k}\vert \eta_\ell(G) - \keff \vert.
					\end{align*}
					Analogously, if $\gamma_{T,k} < \keff$, then 
					\[
					\vert \gamma_{T,k} - \keff\vert= \gamma_{T,k}\left( \left( \prod_{\ell=T+1}^{T+k}\frac{\keff}{\eta_\ell(G)} \right)^{1/k} -1\right) 
					\leq 
					\frac{1}{k}\sum_{\ell=T+1}^{T+k} \vert \eta_\ell(G) - \keff \vert
					\]
					Hence, by combining these estimates with Assumption \ref{ass_perron_frobenius_assumption} (compare to the proof of Theorem \ref{thm:arithmetic_mean_error_bound}) we obtain 
					\[
					\vert \gamma_{T,k} - \keff\vert \leq \frac{1}{k}\sum_{\ell=T+1}^{T+k} \vert \eta_\ell(G) - \keff \vert \leq \frac{C \tau^T }{k},
					\]
					for a constant that is independent of $T,k,N$.
					Plugging into \eqref{equ:geometric_estimator_bound_survival} and using Minkowski's inequality, we deduce
					\begin{equation}\label{equ:geometric_estimator_bound_survival_2}
						\expectationOperator{ \mathds{1}_{\{\tau({N}) > T+k\}} \left| {\kfrak_{T,k}^{geo}} - \keff \right|^p }^\frac1{p}
						\leq 
						\frac{\norm{G}^2 C_{p,p'}'}{\sqrt{N}} + \frac{C \tau^T}{k(1-\tau)}.
					\end{equation}
					Finally, we recall that for a system that has died out, we estimate $\keff$ with $0$. 
					Thus, by \eqref{equ:geometric_estimator_bound_survival_2} and using Proposition \ref{prop:survial_probability_estimate} as in the proof of Theorem \ref{thm:arithmetic_mean_error_bound}, we arrive at 
					\begin{align*}
						\expectationOperator{  \left| {\kfrak_{T,k}^{geo}} - \keff \right|^p }^\frac1{p} 
						&= 
						\expectationOperator{ \mathds{1}_{\{\tau({N}) > T+k\}} \left| {\kfrak_{T,k}^{geo}} - \keff \right|^p }^\frac1{p} + \keff\mathbb{P}[\tau(N) \leq T+k]^{1/p} \\[10pt]
						&\leq
						C_{p,p'}'' \left(\frac{\norm{G}^2}{\sqrt{N}} + \frac{\tau^T }{k} +  (T+k+1)^{1/p}e^{-(a N)/p}\right),
					\end{align*}
					for a constant $C_{p,p'}''$ that only depends on $p,p'$.
				\end{proof}
				Note that \eqref{equ:geometric_estimator_bound_survival_2} in the
				above proof also gives the corollary below.
				
				\begin{corollary}
					L Assumptions \ref{ass_perron_frobenius_assumption}, \ref{ass:minimal_survival_probability} and \ref{assumption_E_p} (for some $p' \in (1, \infty)$) be true.
					Then, for any $p \in [1,p')$ there exists some constant $K_{p,p'} \in (0, \infty)$ such that for any $T,k,N \in \N$ we have 
					\[
					\E\left[\mathds{1}_{\{\tau(N)>T+k\}} \left\vert\kfrak_{T, k}^{geo} - \keff  \right\vert^p  \right]^{1/p} \leq K_{p,p'} \left( \frac{\norm{G}^2}{\sqrt{N}} + \frac{\tau^{T}}{k(1-\tau)}\right) .
					\] 
				\end{corollary}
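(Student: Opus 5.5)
The corollary is the surviving-genealogy part of the proof of Theorem~\ref{thm:geometric_mean_error_bound}, so I would reuse that argument and simply omit the last step that accounts for extinction. The idea is to split the error on $\mathcal{C}=\{\tau(N)>T+k\}$ into a stochastic part and a deterministic part, using the intermediate quantity $\gamma_{T,k}=(\prod_{\ell=T+1}^{T+k}\eta_\ell(G))^{1/k}$. Minkowski's inequality then gives
\[
\E\left[\mathds{1}_{\mathcal{C}}\left\vert\kfrak^{geo}_{T,k}-\keff\right\vert^p\right]^{1/p}\le \E\left[\mathds{1}_{\mathcal{C}}\left\vert\kfrak^{geo}_{T,k}-\gamma_{T,k}\right\vert^p\right]^{1/p}+\left\vert\gamma_{T,k}-\keff\right\vert .
\]

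For the stochastic term I would use the AM--GM inequality separately on $A=\{\kfrak^{geo}_{T,k}\ge\gamma_{T,k}\}$ and on its complement. On $A$, the term is bounded by $\frac{\norm{G}}{kL}\sum_\ell|\eta^N_\ell(G)-\eta_\ell(G)|$, where $L$ is the constant from Lemma~\ref{lem:fk-normalised-lower-bound}. On $A^c$, it is bounded by $\frac{\norm{G}}{k}\sum_\ell (\eta^N_\ell(G))^{-1}|\eta^N_\ell(G)-\eta_\ell(G)|$, which is well defined because $\eta^N_\ell(G)>0$ on $\mathcal{C}$.

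The first sum is handled by Minkowski's inequality together with Theorem~\ref{thm:stability_particle_approx}, applied at exponent $p\le p'$. One can drop from $\mathds{1}_{\{\tau(N)>\ell\}}$ to $\mathds{1}_{\mathcal{C}}$ since $\mathcal{C}\subseteq\{\tau(N)>\ell\}$. This yields $\norm{G}^2/(L\sqrt N)$ up to a constant.

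The second sum is the main obstacle, because the reciprocal $1/\eta^N_\ell(G)$ is unbounded. Here I would apply Hölder's inequality with $q=p'/p>1$ and its conjugate $q'$. Lemma~\ref{lem:Assumption_B_q_true} gives a uniform bound on $\E[\mathds{1}_{\{\tau(N)>\ell\}}(\eta^N_\ell(G))^{-pq'}]$, and Theorem~\ref{thm:stability_particle_approx} at exponent $pq=p'$ controls the other factor by $K_{p'}\norm{G}/\sqrt N$. This is exactly why the statement requires $p<p'$. Averaging over the $k$ values of $\ell$ keeps the bound at order $\norm{G}^2/\sqrt N$.

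For the deterministic term, the same AM--GM argument gives $|\gamma_{T,k}-\keff|\le\frac1k\sum_{\ell=T+1}^{T+k}|\eta_\ell(G)-\keff|$. Proposition~\ref{prop:exponential_congergence_in_total_variation}, integrated against $\eta_0$ as in the proof of Theorem~\ref{thm:arithmetic_mean_error_bound}, bounds each summand by $\widetilde C\tau^\ell$. Summing the geometric series gives at most $\widetilde C\tau^{T+1}/(k(1-\tau))$. Combining the two terms yields the claim with a constant $K_{p,p'}$ that does not depend on $T$, $k$ or $N$.
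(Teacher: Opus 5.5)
Your proposal is correct and is essentially the paper's own argument: the paper reads the corollary off the intermediate estimate \eqref{equ:geometric_estimator_bound_survival_2} in the proof of Theorem~\ref{thm:geometric_mean_error_bound}, which is obtained exactly as you describe, via the split through $\gamma_{T,k}$, the AM--GM bounds on $A$ and $A^c$, H\"older with $q=p'/p$ combined with Lemma~\ref{lem:Assumption_B_q_true}, and the geometric series. One small correction, which the paper shares: on $\{\gamma_{T,k}<\keff\}$ AM--GM only gives $\keff-\gamma_{T,k}\le\frac{\gamma_{T,k}}{kL}\sum_{\ell=T+1}^{T+k}|\eta_\ell(G)-\keff|$, so your deterministic bound needs an extra factor $\keff/L$, which is harmlessly absorbed into $K_{p,p'}$.
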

				
				We also observe that, with essentially identical arguments and conclusions,
				the above result can be modified to give a version of
				Theorem~\ref{thm_conditional_convergence} and Remark~\ref{rk-cost-calculations} in the geometric estimator context.
				
				\section{Power Iteration Algorithms}\label{sec:PWR_Iteration}
				In this section we apply our results to Monte Carlo power iteration algorithms. First, we study the \textit{explicit fission} method in Section \ref{subsec:explicit_fission}. After that we study variations of the algorithm that are used in practice for variance reduction purposes. 
				
				\subsection{Explicit Fission}\label{subsec:explicit_fission}
				We remind ourselves about the roles of the potential function $G$ and the Markov kernel $M$. Namely, $G(x)$ is the number of expected offspring for a particle that is about to undergo fission in configuration $x$, while $M$ models the evolution between fission events. 
				
				Throughout the section, we consider physical systems which fit into the setting of Section \ref{subsec:Physical_assumptions}. Thus, Assumptions \ref{assumption:Q-perron-frobenius} and \ref{ass:minimal_survival_probability} are satisfied.

				The explicit fission algorithm, see Algorithm \ref{alg:genericPI}, is used to implement (a population controlled) system of $N\in \N$ neutrons. 
				
				\begin{algorithm}\label{alg:genericPI}
					\caption{`Explicit fission' power iteration algorithm}
					\begin{algorithmic}[1]
						\State \textbf{Input:}			
						\State $T$: number of generations for burn in
						\State $k$: number of generation to use for $\keff$ estimation
						\State $\mu_0$: initial law of particles
						
						\State\textbf{Initialise particle population:} Sample $\widetilde{X}_j^0 \sim \mu_0$ and	$X_j^0 \sim M(\widetilde{X}_j^0, \cdot )$ independently for $1 \le j \leq N$
						\State\textbf{For }{$t =1$ to $T+k$} \textbf{do}
						\State$\qquad$\textbf{If: }{$\mathcal{N}^{t-1} = \emptyset$}{$\quad$Error: ``System died out"}
						\State$\qquad$\textbf{EndIf}
						\State$\qquad$\textbf{Resample:} Sample $\widetilde{X}_j^t$ uniformly from $\mathcal{N}^{t-1}$ independently for $1 \leq j \leq N$
						\State$\qquad$\textbf{Evolve system:} Sample $X_j^t \sim M(\widetilde{X}_j^t, \cdot )$ independently for $1 \le j \leq N$
						\State$\qquad$\textbf{Generate new particle bank:} Add $n_j^t \in \N$ copies of $X_j^t$ to particle bank $\mathcal{N}^t$, with $\E[n_j^t\vert X_j^0] = G(X_j^t)$ for $1 \leq j \leq N$ 
						\State \textbf{EndFor}
					\end{algorithmic}
				\end{algorithm}

				\begin{remark}
					Note that Algorithm \ref{alg:genericPI} is indeed a version of the simple generic `resampling and evolution' procedure described in Section \ref{subsec:SMC4NTE}. 
								\end{remark}
				
				\begin{remark}
					In the nuclear engineering sometimes a slight variation of Algorithm \ref{alg:genericPI} is performed. Namely, instead of copying $X_j^t$, i.e. the particle that is about to undergo fission, one adds already the offspring (after fission) to $\mathcal{N}^t$. However, this is essentially still Algorithm~\ref{alg:genericPI} up to one extra iteration. 
				\end{remark}
				
				Next, we proceed to the theoretical analysis of Algorithm \ref{alg:genericPI}.
				To this end, for the rest of the section, let $(X_1^t, \dots, X_N^t)_{t \in \N_0}$ be the particles corresponding to Algorithm \ref{alg:genericPI}. Additionally, in this section $\tau(N)$ denotes the corresponding survival time, which equivalently to \eqref{equ:definition_extintion_time}, can be written as 
				\[
				\tau(N) \le \inf\left\{ t \in \N \,\colon\, G(X_1^t)=\cdots =G(X_N^t) =0   \right\}.
				\]
				It follows that for $t < \tau(N)$, we will always have $\sum_{j=1}^N
				G(X_j^{t}) > 0$.
				As before, for $t \in \N$, we denote the particle approximation of $\eta_t$ by $\eta_t^N$. That is, 
				\[
				\eta_t^N (f) = \frac{1}{N}\sum_{j=1}^N f(X_j^t), 
				\]
				for any bounded and measurable $f \colon E \to \R$. The filtration generated by the particles is denoted by $(\mathcal{F}_t)_{ t \in \N_0}$.

				\begin{proposition}\label{prop:Assumption_E_2_for_PWR_IT}
					For any $p \in [1, \infty)$, there exists a constant $C_p \in (0, \infty)$ such that for any $t, N \in \N$ we have 
					\[
					\E\left[ \mathds{1}_{\{\tau(N)>t\}} \left\vert \eta_t^N (f) - \Phi(\eta_{t-1}^N)(f)  \right\vert^p \bigg \vert \mathcal{F}_{t-1}  \right]^{1/p} \leq \frac{C_p \norm{f}}{\sqrt{N}}, 
					\]
					for any measurable and bounded $f \colon E \to \R$. 
				\end{proposition}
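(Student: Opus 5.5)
Given $\mathcal{F}_{t-1}$ on the event $\{\tau(N)>t\}$, the particle bank $\mathcal{N}^{t-1}$ is nonempty. The plan is to show that, conditionally on $\mathcal{F}_{t-1}$, the resampled-and-evolved particles $X_1^t,\dots,X_N^t$ are i.i.d. with common law $\Phi(\eta_{t-1}^N)$, and then apply a Marcinkiewicz--Zygmund (or Rosenthal/Burkholder) inequality for sums of i.i.d. centred bounded variables.

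\textbf{Step 1: identifying the conditional law.} Each copy in $\mathcal{N}^{t-1}$ is located at some $X_i^{t-1}$, with $n_i^{t-1}$ copies. Drawing uniformly from the bank therefore selects location $X_i^{t-1}$ with probability $n_i^{t-1}/\sum_j n_j^{t-1}$. This is \emph{not} exactly $G(X_i^{t-1})/\sum_j G(X_j^{t-1})$. So the first thing I will do is fix the interpretation: $\mathcal{F}_{t-1}$ should include the offspring counts $n_j^{t-1}$, or else I work with the conditional expectation given positions only.

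The cleanest route is to treat the bank-generation step as part of the resampling. Conditionally on the positions $X^{t-1}$, the expected number of selections of index $i$ should then be proportional to $G(X_i^{t-1})$, matching the generic scheme of Section~\ref{subsec:SMC4NTE}. I expect this identification to be the main obstacle. The ratio $n_i/\sum_j n_j$ has conditional mean only approximately equal to $G_i/\sum_j G_j$. I would try two routes, in order:
\begin{itemize}
\item[(a)] define $\eta_{t-1}^N$ and the filtration so that the relevant measure is the bank measure $\frac{1}{\sum_j n_j}\sum_i n_i\,\delta_{X_i^{t-1}}$ pushed through $M$;
\item[(b)] otherwise, bound the extra discrepancy $\frac{\sum_i n_i M f(X_i)}{\sum_i n_i}-\frac{\sum_i G_i Mf(X_i)}{\sum_i G_i}$ by $O(\norm{f}/\sqrt N)$ in $L^p$.
\end{itemize}
For route (b) I would use that the $n_i$ are conditionally independent and bounded (Assumption B). By Assumption~\ref{ass:minimal_survival_probability}, $\frac1N\sum_i G_i\ge \kappa\cdot(\#\text{survivors})/N$. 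The inverse moments of the denominator are then controlled as in Lemma~\ref{lem:Assumption_B_q_true}.

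\textbf{Step 2: the i.i.d. fluctuation.} Under the conditional i.i.d. structure, the variables $Y_j=f(X_j^t)-\Phi(\eta_{t-1}^N)(f)$ are conditionally i.i.d., centred, and bounded by $2\norm{f}$. The Marcinkiewicz--Zygmund inequality then gives
\[
\E\Bigl[\Bigl|\tfrac1N\textstyle\sum_j Y_j\Bigr|^p\,\Big|\,\mathcal{F}_{t-1}\Bigr]^{1/p}\le \frac{2B_p\norm{f}}{\sqrt N}.
\]
The indicator $\mathds{1}_{\{\tau(N)>t\}}$ can be pulled out because $\tau(N)$ is predictable. If it is not $\mathcal{F}_{t-1}$-measurable in the chosen setup, I bound it by $\mathds{1}_{\{\tau(N)>t-1\}}$, which is harmless since we only need an upper bound.

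\textbf{Step 3: combining.} Combining the two terms with Minkowski's inequality yields the constant $C_p$. It is uniform in $t$ and $N$ because all bounds depend only on $p$, $\kappa$ and the uniform upper bound on the offspring numbers.
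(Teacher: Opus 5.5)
You have spotted a real subtlety: uniform sampling from the realised bank selects $X_i^{t-1}$ with probability $n_i^{t-1}/\sum_l n_l^{t-1}$, not $G(X_i^{t-1})/\sum_l G(X_l^{t-1})$. But the proposal never resolves this, and neither of your routes yields the stated bound.

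\textbf{Route (a).} It changes the centring. Marcinkiewicz--Zygmund then controls $\eta_t^N(f)-\sum_i \frac{n_i^{t-1}}{\sum_l n_l^{t-1}}M[f](X_i^{t-1})$, not $\eta_t^N(f)-\Phi(\eta_{t-1}^N)(f)$. Your Step~2 also assumes that the $X_j^t$ are conditionally i.i.d.\ with law $\Phi(\eta_{t-1}^N)$, which is exactly what Step~1 says fails.

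\textbf{Route (b).} It cannot work conditionally on $\mathcal{F}_{t-1}$, whether or not $\mathcal{F}_{t-1}$ contains the offspring counts. Take a configuration in which only $X_1^{t-1},X_2^{t-1}$ lie in $E\setminus E_0$. Write $G_i=G(X_i^{t-1})$ and $g_i=M[f](X_i^{t-1})$, with $g_1\neq g_2$. With positive conditional probability the realised proportion $p_n=n_1^{t-1}/(n_1^{t-1}+n_2^{t-1})$ differs from $p_G=G_1/(G_1+G_2)$; for example $n_1^{t-1}=0<n_2^{t-1}$, since absorption is a zero-offspring fission. On that event:
\begin{itemize}
\item $\eta_t^N(f)$ concentrates at $p_ng_1+(1-p_n)g_2$, whereas $\Phi(\eta_{t-1}^N)(f)=p_Gg_1+(1-p_G)g_2$;
\item survival past time $t$ has conditional probability tending to one as $N\to\infty$.
\end{itemize}
So the conditional $L^p$ norm stays above a positive constant, proportional to $|p_n-p_G|\,|g_1-g_2|$ and independent of $N$. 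Such configurations occur with positive (if exponentially small) probability, so no almost-sure bound $C_p\norm{f}/\sqrt N$ is possible along this route.

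Your appeal to Lemma~\ref{lem:Assumption_B_q_true} reflects the same problem. That lemma is an \emph{unconditional} estimate that averages over the binomial number of survivors, which is precisely the randomness $\mathcal{F}_{t-1}$ has already fixed. Moreover, the denominator that actually appears in the discrepancy is $\sum_i n_i^{t-1}$, not $\sum_i G(X_i^{t-1})$. Done carefully, route (b) gives at best an unconditional $L^p$ bound, which is weaker than the proposition.

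\textbf{The paper's proof.} It avoids all this by reading Algorithm~\ref{alg:genericPI} as an instance of the bootstrap filter of Section~\ref{subsec:SMC4NTE} (cf.\ the remark after the algorithm). Conditionally on $\mathcal{F}_{t-1}$ and on the survival event, the $\widetilde X_j^t$ are drawn i.i.d.\ from $\sum_i \frac{G(X_i^{t-1})}{\sum_l G(X_l^{t-1})}\delta_{X_i^{t-1}}$. The $X_j^t$ are then conditionally i.i.d.\ with law exactly $\Phi(\eta_{t-1}^N)$, the centred summands are bounded by $2\norm{f}$, and Marcinkiewicz--Zygmund finishes the proof in one line. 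Under that reading, your Step~2 alone is the whole argument.

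To prove the proposition as stated, you must commit to this \emph{exact} $G$-proportional selection given $\mathcal{F}_{t-1}$, not an approximation of it. Your Step~1 correctly shows that the paper's phrase ``by specification of Algorithm~\ref{alg:genericPI}'' tacitly assumes this.
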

				
				\begin{proof}
					Note that on the set $\{\tau(N)>t\}$ we have 
					\[
					\Phi(\eta_{t-1}^N) (f) = \frac{Q\eta_{t-1}^N}{\eta_{t-1}^N(G)} = \frac{\sum_{j=1}^N G(X_j^{t-1}) M[f](X_j^{t-1})}{\sum_{j=1}^N G(X_j^{t-1})}.
					\]
					Thus, by specification of Algorithm \ref{alg:genericPI} we have that, again on the set $\{\tau(N)>t\}$ and given $\mathcal{F}_{t-1}$, $\eta_{t}^N(f)$ is an unbiased estimator for $\Phi(\eta_{t-1}^N) (f)$.
					Since particles evolve independently, the result follows by the Marcinkiewicz-Zygmund inequality, compare e.g. \cite[Chapter 8]{gut2006probability}. 
				\end{proof}
				
				Note that Proposition \ref{prop:Assumption_E_2_for_PWR_IT} verifies
				that Assumption \ref{assumption_E_p} is true, for any $p\in [1, \infty)$, for
				Algorithm~\ref{alg:genericPI}. Hence, by using what was proven in
				Section \ref{sec:L_p_error_bounds}, we immediately obtain the
				following two results. 
				
				\begin{corollary}\label{cor:error_bound_explicit_fission}
					For any $p \in [1, \infty)$ there exists a constant $C_p \in (0, \infty)$ such that for any $T,k,N \in \N$ and $\hat{\kfrak} = \kfrak_{T,k}^{ari}$ or $\hat{\kfrak}=\kfrak_{T,k}^{ari}$ we have
					\[
					\E \left[\mathds{1}_{\{\tau(N) > T+k\}} \left\vert \kfrak_{T,k}^{ari} - \keff  \right\vert^p   \right]^{1/p} \leq C_p\left( \frac{1}{\sqrt{N}} + \frac{\tau^T }{k} \right) , 
					\]
					and
					\[
					\E \left[ \left\vert \hat{\kfrak} - \keff  \right\vert^p   \right]^{1/p} \leq C_p \left( \frac{1}{\sqrt{N}} + \frac{\tau^T }{k} +  (T+k+1)^{1/2}e^{-(a N)/2}\right) , 
					\] 
					with $a = - \log(1-\kappa) \in (0, \infty)$, where $\kappa$ is as in Assumption \ref{ass:minimal_survival_probability}.
				\end{corollary}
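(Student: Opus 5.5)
The plan is to combine Proposition~\ref{prop:Assumption_E_2_for_PWR_IT} with the general results of Section~\ref{sec:L_p_error_bounds}. I read $\hat{\kfrak}$ as either $\kfrak_{T,k}^{ari}$ or $\kfrak_{T,k}^{geo}$; the statement writes $\kfrak_{T,k}^{ari}$ twice, which is presumably a typo. The step I expect to fail is the last one: I see no way to obtain the stated extinction term $(T+k+1)^{1/2}e^{-aN/2}$ when $p>2$.

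\textbf{Step 1: hypotheses.} Proposition~\ref{prop:Assumption_E_2_for_PWR_IT} gives Assumption~\ref{assumption_E_p} for every $p\in[1,\infty)$. The physical setting of Section~\ref{subsec:Physical_assumptions} gives Assumptions~\ref{ass_perron_frobenius_assumption} and \ref{ass:minimal_survival_probability}. Since Assumption~\ref{assumption_E_p} holds for all exponents, for the geometric estimator I apply Theorem~\ref{thm:geometric_mean_error_bound} with $p'=p+1$, so the restriction $p<p'$ costs nothing. Throughout, $\norm{G}$, $\norm{G}^2$ and $(1-\tau)^{-1}$ are fixed by the physical system and are absorbed into $C_p$.

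\textbf{Step 2: the bound on survival.} The first displayed inequality is the Corollary following Theorem~\ref{thm:arithmetic_mean_error_bound}, and \eqref{equ:geometric_estimator_bound_survival_2} gives the geometric version. In both I take $t=T+k$, so the survival event is $\{\tau(N)>T+k\}$.

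\textbf{Step 3: the unconditional bound for $p\in[1,2]$.} Theorems~\ref{thm:arithmetic_mean_error_bound} and \ref{thm:geometric_mean_error_bound} give
\[
\E\bigl[\vert \hat{\kfrak}-\keff\vert^p\bigr]^{1/p}\le C_p\Bigl(\frac{1}{\sqrt N}+\frac{\tau^T}{k}+x^{1/p}\Bigr),\qquad x:=(T+k+1)e^{-aN}.
\]
I then compare $x^{1/p}$ with $x^{1/2}$ in two cases.
\begin{enumerate}
\item[(a)] If $x\le 1$ and $p\le 2$, then $x^{1/p}\le x^{1/2}$, which is exactly the stated term.
\item[(b)] If $x>1$, I use a crude bound instead. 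Since $0\le \eta_t^N(G)\le\norm{G}$, both estimators lie in $[0,\norm{G}]$, and the estimator is set to $0$ after extinction. Hence $\vert\hat{\kfrak}-\keff\vert\le \norm{G}+\keff$ deterministically. Because $x^{1/2}>1$ in this case, enlarging $C_p$ to at least $\norm{G}+\keff$ covers it.
\end{enumerate}
This proves the statement as written for $p\in[1,2]$.

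\textbf{Step 4: the case $p>2$, which is the main obstacle.} When $x<1$ we have $x^{1/p}\gg x^{1/2}$, and the extinction part of the error is of order $\keff\,\P[\tau(N)\le T+k]^{1/p}$. I would try two routes.
\begin{enumerate}
\item[(i)] Interpolate with the bounded error: $\E\vert e\vert^p\le(\norm{G}+\keff)^{p-2}\,\E\vert e\vert^2$. Applied to the $p=2$ bound, this degrades the $N^{-1/2}$ term to $N^{-1/p}$, so it does not help.
\item[(ii)] Absorb $x^{1/p}$ into $C_p N^{-1/2}$ in the regime $T+k+1\le e^{aN/2}$, where $x^{1/p}\le e^{-aN/(2p)}\le C_pN^{-1/2}$, and handle $x\ge1$ as in Step 3(b).
\end{enumerate}
Route (ii) still leaves the intermediate regime $e^{-aN/2}<x<1$, for example $x\approx N^{-1}$, uncovered. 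There $x^{1/p}$ dominates both $N^{-1/2}$ and $x^{1/2}$. Closing this gap would need a lower-tail refinement of Proposition~\ref{prop:survial_probability_estimate} bounding $\P[\tau(N)\le T+k]$ by roughly $\bigl((T+k)e^{-aN}\bigr)^{p/2}$, and no earlier result provides that.

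So I would first write out Steps 1--3 carefully, then try to prove such a refined extinction estimate for Algorithm~\ref{alg:genericPI}. If it cannot be proved, the argument justifies the stated term only for $p\le 2$; for $p>2$ it gives the extinction term $(T+k+1)^{1/p}e^{-aN/p}$ inherited directly from Theorems~\ref{thm:arithmetic_mean_error_bound} and \ref{thm:geometric_mean_error_bound}.
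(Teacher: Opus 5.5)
Your route is the paper's own. Its entire proof is the remark that Proposition~\ref{prop:Assumption_E_2_for_PWR_IT} verifies Assumption~\ref{assumption_E_p} for every $p\in[1,\infty)$, followed by quoting Theorems~\ref{thm:arithmetic_mean_error_bound} and \ref{thm:geometric_mean_error_bound} and the corollaries after them. That argument produces exactly the extinction term $(T+k+1)^{1/p}e^{-aN/p}$. So the exponent $1/2$ in the statement is a misprint for $1/p$ (compare \eqref{equ:L_p_error} in the introduction), just as the repeated $\kfrak_{T,k}^{ari}$ should read $\kfrak_{T,k}^{geo}$. Your Steps 1--3 are correct, and the extra case analysis for $p\in[1,2]$ is harmless.

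Do not pursue the refined extinction estimate of Step 4. The bound with exponent $1/2$ is false for $p>2$ under the hypotheses used in Section~\ref{sec:L_p_error_bounds}, so no such estimate exists. Here is a counterexample.

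Take $E=\{0,1\}$, $E_0=\{0\}$, $G(1)=1$, $M(1,\{1\})=\kappa=1-M(1,\{0\})$, $M(0,\{0\})=1$ and $\mu_0=\delta_1$. Then $\keff=\kappa$, $\eta=(1-\kappa)\delta_0+\kappa\delta_1$ and $\omega=\kappa^{-1}\mathds{1}_{\{1\}}$. Moreover $Q^n[g](1)=\keff^n\,\omega(1)\eta(g)$ exactly for $n\ge 1$. Hence Assumption~\ref{ass_perron_frobenius_assumption} holds with rate $\tau=0$, and Assumption~\ref{ass:minimal_survival_probability} holds with this $\kappa$.

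Conditionally on survival so far, each generation dies out with probability exactly $(1-\kappa)^N=e^{-aN}$. The estimator is $0$ after extinction. Take $k=1$ and $T+1=\lfloor N^{-1}(1-\kappa)^{-N}\rfloor$. Then for large $N$,
\[
\E\bigl[\vert\hat{\kfrak}-\keff\vert^p\bigr]^{1/p}\ \ge\ \keff\,\P[\tau(N)\le T+1]^{1/p}\ \ge\ \keff\,(4N)^{-1/p}.
\]
On the other hand, the right-hand side of the stated bound is at most $(1+\sqrt{2})\,C_pN^{-1/2}$. For $p>2$ this is incompatible as $N\to\infty$.

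In such examples $\P[\tau(N)\le T+k]$ really is of order $(T+k)e^{-aN}$, not its $p/2$-th power. So the version you reach at the end, with $(T+k+1)^{1/p}e^{-aN/p}$ for all $p$, is the correct form of the corollary.
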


				Again, with essentially identical arguments and
				conclusions, the above result can be modified to give a version of
				Theorem~\ref{thm_conditional_convergence} and
				Remark~\ref{rk-cost-calculations} in this context.

				\subsection{Variance reduction techniques}\label{subsec:variance_reduction}
				
				In this section we analyse possible modifications of Algorithm
				\ref{alg:genericPI}, which in the nuclear engineering literature are broadly referred to as \textit{variance reduction}, compare e.g. to \cite{bonnet2024effect}. Typically, those changes concern the evolution and resampling steps and result in an unbiased version of Algorithm \ref{alg:genericPI}. In practice, this helps to better `spread out' the particles. Examples are:
				\begin{enumerate}
					\item[1)] \textbf{Implicit fission:} Instead of
					evolving $\widetilde{X}_j^t$ via $M$ to its next fission event, a
					whole path of possible fission and scattering events is
					simulated. After each event, the remaining path is weighted by the probability of survival, such that the whole path remains unbiased.  
					\item[2)] \textbf{Splitting techniques:} Particles my be `split' into two (suitably weighted) ones if the original is in a particularly fissile region. Splitting is a tool to have more possible fission sites, whence helps to stabilise the simulation. 
				\end{enumerate}
				Now, we turn to the error bounds for the variance reduction techniques. For the remainder of this section suppose that Assumptions \ref{ass_perron_frobenius_assumption} and \ref{ass:minimal_survival_probability} are true.
				Due to the possible changes in Algorithm \ref{alg:genericPI}, e.g. through implicit fission or spitting, we consider a more general sequence of particles. 
				Namely, given $\widetilde{X}_1^{t-1}, \dots , \widetilde{X}_N^{t-1}$, each particle may give rise to a whole family of new (weighted) particles (fission events). For $1 \leq j \le N$, denote by $(X_{j,i}^t)_{i=1}^{\ell_j^t}$ the offspring of $\widetilde{X}_j^{t-1}$, where $\ell_j^t\in \N$ is the number of possible fission events corresponding to $X_j^{t-1}$.
				Additionally, let $\theta_{j,i}^t \in [0, 1]$, be a (random) weight associated to $X_{j,i}^t$.
				We define 
				\[
				\Gamma_j^t (\cdot ) = \sum_{i = 1}^{\ell_j^t} \theta_{j,i}^t \delta_{X_{j,i}^t}(\cdot) \qquad \text{and} \qquad \eta^N_t = \frac{1}{N}\sum_{j=1}^N \Gamma_j^t(\cdot ).
				\]
				For the rest of the section we assume that the weights are chosen such that each $\Gamma_j^t$, and hence also $\eta_{t}^N$, are probability measures.
				The objects $\tau(N)$; $(\mathcal{F}_t)_{t \in \N_0}$; $\kfrak_{T, k}^{ari}$ and $\kfrak_{T,k}^{geo}$ are defined similarly as in the previous section.  
				
				Investigating the proof of Proposition \ref{prop:Assumption_E_2_for_PWR_IT}, we immediately obtain the following. 
				\begin{lemma}\label{lem:Assumption_E_2_for_variance_reduction}
					Assume that for any $t, N \in \N$ and any bounded and measurable $f \colon E \to \R$, given $\mathcal{F}_{t-1}$, the random variables $\Gamma_1^{t}(f), \dots, \Gamma_N^t(f)$ are independent unbiased estimators of $\Phi\left(\delta_{\tilde{X}_j^{t-1}}\right)\left(f\right)$.
					Then, for any $p \in [1, \infty)$, there exists $C_p \in (0, \infty)$ such that for any $t, N \in\N$ we have
					\[
					\E\left[ \mathds{1}_{\{\tau(N)> t\}} \left\vert \eta_{t}^N(f) - \Phi(\eta_{t-1}^N)(f)  \right\vert^p  \right]^{1/p} \leq \frac{C_p \norm{f}}{\sqrt{N}},
					\] 
					for any measurable and bounded $f \colon E \to \R$.
				\end{lemma}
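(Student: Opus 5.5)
The plan is to mimic the proof of Proposition~\ref{prop:Assumption_E_2_for_PWR_IT}: a conditional Marcinkiewicz--Zygmund argument for a sum of independent, centred, bounded terms, followed by the tower property.

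\textbf{Step 1: write the error as a centred sum.} Since $\tau(N)$ is predictable, $\mathds{1}_{\{\tau(N)>t\}}$ is $\mathcal{F}_{t-1}$-measurable. On this event the resampled particles $\widetilde{X}_1^{t-1},\dots,\widetilde{X}_N^{t-1}$ are well defined and $\sum_j G(X_j^{t-1})>0$. As in the explicit fission scheme, resampling is chosen so that, given $\mathcal{F}_{t-1}$,
\[
\frac1N\sum_{j=1}^N \Phi\bigl(\delta_{\widetilde{X}_j^{t-1}}\bigr)(f)
\]
has conditional mean $\Phi(\eta_{t-1}^N)(f)$. I would interpret $\mathcal{F}_{t-1}$ as including the resampling step. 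In that case the hypothesis gives, on the event,
\[
\eta_t^N(f)-\Phi(\eta_{t-1}^N)(f)=\frac1N\sum_{j=1}^N Z_j,\qquad Z_j=\Gamma_j^t(f)-\Phi\bigl(\delta_{\widetilde{X}_j^{t-1}}\bigr)(f).
\]
Conditionally on $\mathcal{F}_{t-1}$, the $Z_j$ are independent and centred.

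\textbf{Step 2: bound the summands.} Each $\Gamma_j^t$ is a probability measure, so $|\Gamma_j^t(f)|\le\norm{f}$. Likewise $\Phi(\delta_x)$ is a probability measure, so $|\Phi(\delta_x)(f)|\le\norm{f}$. Hence $|Z_j|\le 2\norm{f}$.

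\textbf{Step 3: apply Marcinkiewicz--Zygmund conditionally.} For $p\ge2$ the conditional inequality gives
\[
\E\Bigl[\bigl|\tfrac1N\textstyle\sum_j Z_j\bigr|^p\,\Big|\,\mathcal{F}_{t-1}\Bigr]^{1/p}\le \frac{B_p}{N}\,\E\Bigl[\bigl(\textstyle\sum_j Z_j^2\bigr)^{p/2}\Big|\mathcal{F}_{t-1}\Bigr]^{1/p}\le \frac{2B_p\norm{f}}{\sqrt N}.
\]
For $p\in[1,2)$ the same bound follows from the case $p=2$ via Jensen's inequality. Multiplying by the $\mathcal{F}_{t-1}$-measurable indicator and taking expectations (tower property) gives the unconditional statement, with $C_p=2B_p$ independent of $t$, $N$ and $f$.

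\textbf{Main obstacle.} The delicate point is Step 1. The identity requires that the conditional expectation of the resampled empirical average equal $\Phi(\eta_{t-1}^N)(f)$, which depends on exactly how resampling selects particles with weights proportional to $G$. If $\mathcal{F}_{t-1}$ does \emph{not} contain the resampling, I would split the error into two terms, each controlled by a conditional Marcinkiewicz--Zygmund bound:
\begin{itemize}
\item[(a)] the resampling error, i.e.\ the average of $\Phi(\delta_{\widetilde{X}_j^{t-1}})(f)$ minus $\Phi(\eta_{t-1}^N)(f)$, whose terms are i.i.d.\ given the previous generation;
\item[(b)] the evolution error $\frac1N\sum_j Z_j$, whose terms are independent given the resampled particles.
\end{itemize}
This split yields the same $N^{-1/2}$ rate with a doubled constant.
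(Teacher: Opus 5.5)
Your overall strategy (conditional Marcinkiewicz--Zygmund for bounded, centred, conditionally independent summands, then the tower property) is the paper's; the paper obtains the lemma by re-reading the proof of Proposition~\ref{prop:Assumption_E_2_for_PWR_IT}. However, Step~1 is wrong under exactly the reading of $\mathcal{F}_{t-1}$ you adopt. If the resampled particles $\widetilde{X}_j^{t-1}$ are $\mathcal{F}_{t-1}$-measurable, then so is $\frac1N\sum_j\Phi(\delta_{\widetilde{X}_j^{t-1}})(f)$. ``Having conditional mean $\Phi(\eta_{t-1}^N)(f)$'' would then force it to \emph{equal} $\Phi(\eta_{t-1}^N)(f)$, which fails for any genuinely random resampling. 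The correct identity is
\[
\eta_t^N(f)-\Phi(\eta_{t-1}^N)(f)=\frac1N\sum_{j=1}^N Z_j+R_t,\qquad R_t:=\frac1N\sum_{j=1}^N\Phi\bigl(\delta_{\widetilde{X}_j^{t-1}}\bigr)(f)-\Phi(\eta_{t-1}^N)(f).
\]
Your Step~3 bound is valid for $\frac1N\sum_j Z_j$, but it does not transfer to the left-hand side. By Jensen, $\E[|\eta_t^N(f)-\Phi(\eta_{t-1}^N)(f)|^p\mid\mathcal{F}_{t-1}]^{1/p}\ge|R_t|$. Under multinomial resampling, on the positive-probability event that all $N$ draws select the same particle $x$, we get $R_t=\Phi(\delta_x)(f)-\Phi(\eta_{t-1}^N)(f)$, which is not $O(N^{-1/2})$ in general. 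So in your reading, the conditional estimate you derive before applying the tower property is false.

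The repair is your own split into (a) and (b), but it is not a contingency for the other reading of $\mathcal{F}_{t-1}$: it is needed precisely in yours. Term (a) must be handled by conditioning on the coarser, pre-resampling $\sigma$-algebra and using the resampling law (i.i.d.\ draws from the $G$-weighted empirical measure). The lemma's hypothesis does not contain this information, so you are right to import it from the explicit-fission scheme. With (a) and (b) each bounded by $2B_p\norm{f}/\sqrt{N}$, Minkowski gives $C_p=4B_p$.

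A smaller point: $\{\tau(N)>t\}$ depends on the generation-$t$ particles, so it is not $\mathcal{F}_{t-1}$-measurable. By \eqref{equ:extinction_weights} only $\{\tau(N)\ge t\}$ is, despite the word ``predictable''. Bound $\mathds{1}_{\{\tau(N)>t\}}\le\mathds{1}_{\{\tau(N)\ge t\}}$ before conditioning; on the latter event $\eta_{t-1}^N(G)>0$, so $\Phi(\eta_{t-1}^N)$ and the resampling are well defined.

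For comparison, the paper conditions only on the previous generation. There, with i.i.d.\ resampling followed by particle-wise independent evolution, the $N$ summands are conditionally independent with common mean $\Phi(\eta_{t-1}^N)(f)$; this is the hypothesis as phrased in Corollary~\ref{cor:error_bound_variance_reduction}. A single Marcinkiewicz--Zygmund application then suffices. Your repaired two-term version matches the lemma's literal hypothesis (unbiasedness for $\Phi(\delta_{\widetilde{X}_j^{t-1}})(f)$) and isolates the resampling error, at the cost of a factor $2$ in the constant.
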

				
				Now, we observe that all the proofs of Section \ref{sec:L_p_error_bounds} are still true for the particle approximation $\eta_{t}^N$ as defined in this section. Thus, we obtain the following error bounds for the variance reduction techniques of this section.

				\begin{corollary}\label{cor:error_bound_variance_reduction}
					Assume that for any $t, N \in \N$ and any bounded and measurable $f \colon E \to \R$, given $\mathcal{F}_{t-1}$, the random variables $\Gamma_1^{t}(f), \dots, \Gamma_N^t(f)$ are independent unbiased estimators of $\Phi(\eta_{t-1}^N)(f)$.
					Then, for any $p \in [1, \infty)$ there exists $C_p \in (0, \infty)$ such that for any $T, N,k \in\N$ and $\hat{\kfrak} = \kfrak_{T,k}^{ari}$ or $\hat{\kfrak}=\kfrak_{T,k}^{ari}$ we have
					\[
					\E \left[\mathds{1}_{\{\tau(N) > T+k\}} \left\vert \hat{\kfrak} - \keff  \right\vert^p   \right]^{1/p} \leq C_p \left( \frac{1}{\sqrt{N}} + \frac{\tau^T }{k} \right),
					\] 
					and
					\[
					\E \left[ \left\vert \hat{\kfrak} - \keff  \right\vert^p   \right]^{1/p} \leq C_p\left( \frac{1}{\sqrt{N}} + \frac{\tau^T }{k} +  (T+k+1)^{1/2}e^{-(a N)/2}\right),
					\]
					with $a = - \log(1-\kappa) \in (0, \infty)$, where $\kappa$ is as in Assumption \ref{ass:minimal_survival_probability}. 
				\end{corollary}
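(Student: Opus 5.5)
The plan is to reduce Corollary~\ref{cor:error_bound_variance_reduction} to the general results of Section~\ref{sec:L_p_error_bounds}. Those results use only three structural inputs. The first is the Perron--Frobenius behaviour of $Q$ (Assumption~\ref{ass_perron_frobenius_assumption}). The second is the survival estimate (Assumption~\ref{ass:minimal_survival_probability} via Proposition~\ref{prop:survial_probability_estimate}). The third is the one-step local error bound, Assumption~\ref{assumption_E_p}. The first two are properties of the physical system and are assumed throughout the section. So the work is to verify Assumption~\ref{assumption_E_p} for the weighted particle measures $\eta_t^N=\frac1N\sum_j\Gamma_j^t$, and then to check that the proofs of Theorems~\ref{thm:stability_particle_approx}, \ref{thm:arithmetic_mean_error_bound} and \ref{thm:geometric_mean_error_bound} go through verbatim with these measures.

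\textbf{Step 1: the local error bound.} Condition on $\mathcal{F}_{t-1}$, on the event $\{\tau(N)>t\}$. This event is $\mathcal{F}_{t-1}$-measurable, since $\tau(N)$ is predictable. By hypothesis the $\Gamma_j^t(f)$ are conditionally independent with
\[
\frac1N\sum_j \E[\Gamma_j^t(f)\mid\mathcal{F}_{t-1}]=\Phi(\eta_{t-1}^N)(f).
\]
Each $\Gamma_j^t$ is a probability measure, so $|\Gamma_j^t(f)|\le\norm{f}$. The centred summands are therefore independent, mean zero and bounded by $2\norm{f}$. The conditional Marcinkiewicz--Zygmund inequality (as in Proposition~\ref{prop:Assumption_E_2_for_PWR_IT}) then gives the conditional form of Assumption~\ref{assumption_E_p} with $C_p$ depending only on $p$; this is Lemma~\ref{lem:Assumption_E_2_for_variance_reduction} in conditional form.

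\textbf{Step 2: transferring the Section~\ref{sec:L_p_error_bounds} estimates.} The error decomposition \eqref{equ:error_decomp} and Lemmas~\ref{lem:bound_semigroup}--\ref{lem:stability_particle_approx_2} only use that $\eta_k^N$ is an $\mathcal{F}_k$-measurable probability measure and that Assumption~\ref{assumption_E_p} holds. One point needs checking: the lower bound on $1/(\keff^{-t+k}b)$ in Lemma~\ref{lem:stability_particle_approx_1} relied on $\Phi(\eta_{k-1}^N)$ being a $G$-weighted average of $M\omega_{t-k}$ at alive points. That is still true for weighted $\eta_{k-1}^N$, because the bound $M\omega_{t-k}\ge\widetilde c$ holds pointwise on $E\setminus E_0$. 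So Theorem~\ref{thm:stability_particle_approx} holds, and the arithmetic-mean bounds follow exactly as in Theorem~\ref{thm:arithmetic_mean_error_bound} and its corollary. Here $(T+k+1)^{1/p}e^{-aN/p}$ is weakened to the stated form for the relevant $p$, with $\norm G$ absorbed into $C_p$.

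\textbf{Step 3: the main obstacle.} Two inputs are the most delicate and must be redone for the new algorithm. The first is the survival estimate of Proposition~\ref{prop:survial_probability_estimate}. For it I would argue that each resampled particle is in $E\setminus E_0$, so that each family $\Gamma_j^t$ puts mass on $E\setminus E_0$ with conditional probability at least $\kappa$. The natural assumption is that $\Gamma_j^t$ dominates the unbiased single-path mechanism; otherwise I would restate the extinction event in terms of $\Gamma_j^t(G)>0$. The second is the negative-moment bound of Lemma~\ref{lem:Assumption_B_q_true}, which the geometric mean needs. With weights in $[0,1]$, the bound $\eta_t^N(G)\ge\kappa\ell/N$ fails. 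I would try to replace it using the unbiasedness $\E[\Gamma_j^t(\mathds 1_{E\setminus E_0})\mid\mathcal F_{t-1}]\ge\kappa$ together with a binomial-type domination of the number of families carrying alive mass. If that fails, I would keep the geometric estimator only under an added lower bound on the weights.

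Once these are in place, the geometric-mean case follows by applying Theorem~\ref{thm:geometric_mean_error_bound} with $p'>p$ arbitrary, which is allowed since Step~1 holds for every $p'$.
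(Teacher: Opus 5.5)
Your Steps~1--2 are the paper's argument. The paper verifies Assumption~\ref{assumption_E_p} by Marcinkiewicz--Zygmund (Lemma~\ref{lem:Assumption_E_2_for_variance_reduction}) and then asserts that every proof of Section~\ref{sec:L_p_error_bounds} carries over; this settles the first inequality for $\kfrak_{T,k}^{ari}$. You are right that this assertion hides places where unweighted particles were used. However, your Step~3 leaves the survival bound, which the second inequality needs, unproved. The ``domination'' assumption you propose for it is unnecessary, because the bound follows from the hypotheses directly. Put $Z_j=\Gamma_j^t(E\setminus E_0)\in[0,1]$. On $\{\tau(N)>t-1\}$, unbiasedness with $f=\mathds{1}_{E\setminus E_0}$, together with $G=0$ on $E_0$, gives
\[
\E[Z_j\mid\mathcal F_{t-1}]=\frac{\eta_{t-1}^N\bigl(G\,M(\cdot,E\setminus E_0)\bigr)}{\eta_{t-1}^N(G)}\ge\kappa .
\]
Since $Z_j\le 1$, this yields $\P[Z_j>0\mid\mathcal F_{t-1}]\ge\E[Z_j\mid\mathcal F_{t-1}]\ge\kappa$. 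Because $G>0$ exactly off $E_0$, extinction at time $t$ means $Z_1=\dots=Z_N=0$. Conditional independence then gives the factor $(1-\kappa)^N$, so Proposition~\ref{prop:survial_probability_estimate} holds verbatim with the same $a$.

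For the geometric mean your diagnosis is correct, but the binomial fix cannot work. Counting the families with $Z_j>0$ gives no lower bound on $\eta_t^N(G)\ge\frac{\kappa}{N}\sum_j Z_j$, since each $Z_j$ may be arbitrarily small. If $Z_j$ has enough mass near $0$, the negative moments in Lemma~\ref{lem:Assumption_B_q_true} are infinite. So the H\"older step of Theorem~\ref{thm:geometric_mean_error_bound} must be replaced, or an extra hypothesis imposed (such as your lower bound on the weights). The paper's one-line justification does not handle this either. Note that the statement as printed names $\kfrak_{T,k}^{ari}$ twice, so literally only the arithmetic estimator is claimed.

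Finally, your phrase ``weakened to the stated form for the relevant $p$'' is valid only for $p\le 2$. Set $x=(T+k+1)e^{-aN}$. For $p\le 2$ and $x\le 1$ we have $x^{1/p}\le x^{1/2}$, and for $x>1$ the bound is trivial. For $p>2$ the $1/2$ form can fail. Consider the bootstrap particle filter on $E=\{0,1\}$, $E_0=\{0\}$, with $G(1)=1$ and $M(1,\{1\})=\kappa$. Each step then dies out with probability exactly $(1-\kappa)^N$, after which the estimator is $0$. Taking $T+k\approx e^{aN}/N$ gives a left-hand side $\gtrsim N^{-1/p}$ against a right-hand side of order $N^{-1/2}$. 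What your argument (and the paper's) actually proves is the bound with $(T+k+1)^{1/p}e^{-aN/p}$, as in Theorem~\ref{thm:arithmetic_mean_error_bound}.
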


				\begin{remark}
					It is possible to consider even more general version for $\Gamma_{j}^t$ in Corollaries \ref{cor:error_bound_variance_reduction} as long as the independence and unbiasedness properties are satisfied. 
				\end{remark}

				\section*{Acknowledgments}
				This work was supported by the EPSRC grants
				EP/W026899/2 and EP/P009220/1. TD was supported by a scholarship from the EPSRC Centre for Doctoral
				Training in Statistical Applied Mathematics at Bath (SAMBa), under the project EP/L015684/1.
				
				{Some of the results in this paper originally
					appeared in similar forms in the PhD thesis of TD.}
				
				The authors thank Th{\'e}ophile Bonnet for useful discussion and explanation about implementations of Monte Carlo power iteration aspects.  
				
				The authors used GenAI to assist them searching for relevant papers and for verification of some calculations. All GenAI suggestions were carefully checked for their correctness. All writing and ideas are original and were not produced by AI.

\end{document}